\newtheorem{defn}{Definition}[section]
\newtheorem{prop}[defn]{Proposition}
\newtheorem{cor}[defn]{Corollary}
\newtheorem{lemma}[defn]{Lemma}
\newtheorem{constr}[defn]{Construction}
\newtheorem{thm}[defn]{Theorem}
\newtheorem{ex}[defn]{Example}
\theoremstyle{remark}
\newtheorem{rem}[defn]{Remark}
\newcommand{\NN}{\mathbb{N}}
\newcommand{\ZZ}{\mathbb{Z}}
\newcommand{\CC}{\mathbb{C}}
\newcommand{\QQ}{\mathbb{Q}}
\newcommand{\bAA}{\mathbb{A}}
\newcommand{\bSS}{\mathbb{S}}
\newcommand{\PP}{\mathbb{P}}
\newcommand{\T}{\mathcal{T}}
\newcommand{\U}{\mathcal{U}}
\newcommand{\V}{\mathcal{V}}
\newcommand{\F}{\mathcal{F}}
\newcommand{\G}{\mathcal{G}}
\newcommand{\cH}{\mathcal{H}}
\newcommand{\M}{\mathcal{M}}
\newcommand{\E}{\mathcal{E}}
\newcommand{\cO}{\mathcal{O}}
\newcommand{\A}{\mathcal{A}}
\newcommand{\C}{\mathcal{C}}
\newcommand{\D}{\mathcal{D}}
\newcommand{\I}{\mathcal{I}}
\newcommand{\m}{\mathfrak{m}}
\newcommand{\Spec}{\mathrm{Spec}\ }
\newcommand{\lookup}{\colorbox{yellow}{Look it up!}}
\newcommand{\TW}{\operatorname{TW}}
\newcommand{\PV}{\operatorname{PV}}
\title{Log Smooth Deformation Theory \\ via Gerstenhaber Algebras}
\author{Simon Felten\footnote{S.~Felten, Math.~Institut, Johannes-Gutenberg-Universit\"at Mainz, 55128 Mainz, Germany, email:  felten.math@posteo.net \newline
\textit{Mathematics Subject Classification:} Primary 14D15; Secondary 14J32}}
\begin{document}

\maketitle

\begin{abstract}
 We construct a $k\llbracket Q\rrbracket$-linear predifferential graded Lie algebra $L^\bullet_{X_0/S_0}$ associated to a log smooth and saturated morphism $f_0: X_0 \to S_0$ and prove that it controls the log smooth deformation functor. This provides a geometric interpretation of a construction in \cite{ChanLeungMa2019} whereof $L^\bullet_{X_0/S_0}$ is a purely algebraic version. Our proof crucially relies on studying deformations of the Gerstenhaber algebra of polyvector fields; this method is closely related to recent developments in mirror symmetry.
\end{abstract}

\section{Introduction}

Given a smooth variety $X$ over an algebraically closed field $k \supset \QQ$, smooth deformation theory associates a functor $\mathrm{Def}_X: \mathbf{Art}\to \mathbf{Set}$ of Artin rings to it such that $\mathrm{Def}_X(A)$ is the set of isomorphism classes of smooth deformations of $X$ over $\Spec A$.

To understand the properties of such functors of Artin rings, differential graded Lie algebras (dglas) are well-established. Given a dgla $L^\bullet$, there is an associated functor 
$$\mathrm{Def}_{L^\bullet}: \mathbf{Art} \to \mathbf{Set}$$ 
of Artin rings by taking an Artin ring $A$ to solutions $\eta \in L^1 \otimes \m_A$ of the Maurer--Cartan equation
$$d\eta + \frac{1}{2}[\eta,\eta] = 0$$
up to gauge equivalence. We say a dgla $L^\bullet$ controls deformations of $X$ if $\mathrm{Def}_X \cong \mathrm{Def}_{L^\bullet}$. Such an isomorphism reduces the problem of understanding deformations of $X$ to the problem of understanding $L^\bullet$. This has been fruitful in various places, e.g.~when proving the Bogomolov--Tian--Todorov theorem with purely algebraic means in \cite{AlgebraicBTT2010}.

\subsubsection*{Our Setup: Log Smooth Deformation Theory}

In this paper, we establish an analog of the above dgla in logarithmic geometry, replacing the smooth variety $X$ by a log smooth morphism. Log smoothness is the analog of the classical notion of smoothness and has been introduced in \cite{kkatoFI}. It allows to treat certain singularities as if they were smooth, a technique which has been proven fruitful in smoothing normal crossing spaces, e.g.~by Kawamata--Namikawa in \cite{KawamataNamikawa1994}, and in mirror symmetry, e.g.~by Gross--Siebert in \cite{Gross2011}. Infinitesimal log smooth deformation theory was established by F. Kato in \cite{Kato1996}. It studies deformations of a log smooth morphism $f_0: X_0 \to S_0$ of fs log schemes. Here, the base is a log point $S_0 = \Spec(Q \to k)$ for a sharp toric monoid $Q$, most commonly $Q = 0$ or $Q = \NN$. The map $Q \to k$ is $0 \mapsto 1$ and $0 \not= q \mapsto 0$. Log smooth deformation theory replaces $\mathbf{Art}$ by $\mathbf{Art}_Q$, the category of local Artin $k\llbracket Q \rrbracket$-algebras with residue field $k$. Every $A \in \mathbf{Art}_Q$ gives rise to a log ring $Q \to A$ via the $k\llbracket Q\rrbracket$-algebra structure, and thus to a log scheme $S = \Spec (Q \to A)$, for which we write $S \in \mathbf{Art}_Q$ by abuse of notation. A \emph{log smooth deformation} over $S \in \mathbf{Art}_Q$ is a Cartesian diagram 
\[
 \xymatrix{
  X_0 \ar[r] \ar[d]^{f_0} & X \ar[d]^f \\
  S_0 \ar[r] & S \\
 }
\]
where $f$ is log smooth. Because $S_0 \to S$ is strict, it does not matter in which category of log schemes---of all, of fine, or of fine saturated log schemes---we consider Cartesianity. For technical reasons, we restrict to morphisms $f_0: X_0 \to S_0$ which are saturated, cf.~Remark~\ref{sat-dense-rem}. This notion is also known as being of Cartier type; most strikingly, saturated and log smooth implies that the fibers of $f_0$ are reduced and Cohen--Macaulay fibers, cf.~\cite{LoAG2018} for more of its elementary properties. In case $f_0$ is saturated, the deformation $f$ is saturated as well. \'Etale locally, such deformations exist uniquely up to (non-unique) isomorphism, cf.~\cite[Prop.~8.4]{Kato1996}. Taking isomorphism classes of deformations yields the log smooth deformation functor
$$LD_{X_0/S_0}: \mathbf{Art}_Q \to \mathbf{Set},$$
which has a representable hull in case $f_0: X_0 \to S_0$ is proper by \cite[Thm.~8.7]{Kato1996}. 

\subsubsection*{The Main Result: A Pre-DGLA Controls $LD_{X_0/S_0}$}

We provide the dgla in the log setting by translating ideas of Chan--Leung--Ma in \cite{ChanLeungMa2019} to our setting and bridging the remaining gap to log smooth deformation theory. To achieve this, $k\llbracket Q\rrbracket$-linear dglas do not suffice. Namely, if $LD_{X_0/S_0}$ were controlled by such a dgla, then for every $S \in \mathbf{Art}_Q$, we would have a deformation over $S$ corresponding to the trivial Maurer--Cartan solution $\eta = 0$, so there would exist a log smooth deformation over every base (which is in the classical setting the fiber product $X \times S$). This is not the case---the spaces in \cite[Thm.~3]{Persson1983} are $d$-semistable normal crossing spaces all of whose flat deformations are locally trivial. Thus, by \cite[Thm.~11.7]{Kato1996}, they can be endowed with a log smooth log structure over $\Spec(\NN \to k)$, but they admit no log smooth deformation over e.g.~$\Spec(\NN \to k[t]/(t^2))$. Instead of dglas, we employ $k\llbracket Q\rrbracket$-linear \emph{pre}differential graded Lie algebras, a notion which essentially coincides with almost dgla in \cite{ChanLeungMa2019}. We define them and study systematically their basic properties in the Appendix~\ref{pdgla}. They are graded Lie algebras $(L^\bullet,[-,-])$ over $k\llbracket Q\rrbracket$ endowed with a derivation $d$ which does not need to be a differential, but which admits an element $\ell \in L^2$ with $d^2 = [\ell,-]$. This allows us to define an associated deformation functor 
$$\mathrm{Def}_{L^\bullet}: \mathbf{Art}_Q \to \mathbf{Set}$$
via the modified Maurer--Cartan equation $d\eta + \frac{1}{2}[\eta,\eta] + \ell = 0$;
then $\eta = 0$ is no longer a solution.

\begin{thm}\label{log-smooth-dgla}
 Let $f_0: X_0 \to S_0$ be log smooth and saturated. Then there is a $k\llbracket Q\rrbracket$-linear predifferential graded Lie algebra $L^\bullet := L^\bullet_{X_0/S_0}$ and a natural equivalence 
 $$LD_{X_0/S_0} \cong \mathrm{Def}_{L^\bullet}$$
 of functors of Artin rings.
\end{thm}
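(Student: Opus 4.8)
The plan is to factor the desired equivalence through an intermediate functor of deformations of the Gerstenhaber algebra of log polyvector fields, as the abstract suggests, and to prove $LD_{X_0/S_0} \cong (\text{Gerstenhaber deformations}) \cong \mathrm{Def}_{L^\bullet}$. The first step is the construction of $L^\bullet$. I would begin with the sheaf $\bigwedge^{\bullet} \Theta_{X_0/S_0}$ of log polyvector fields, i.e.\ exterior powers of the sheaf of log derivations, which carries the wedge product together with the Schouten--Nijenhuis bracket and is thus a sheaf of Gerstenhaber algebras. The log base direction (the monoid $Q$ and its coordinate $k\llbracket Q\rrbracket$) is what forces us out of the dgla world: a lift of the log de Rham differential to polyvector fields supplies the predifferential $d$, and the failure $d^2 = [\ell,-]$ is governed by an element $\ell \in L^2$ built from the log structure, which is exactly the obstruction to the trivial deformation existing. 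To pass from this sheaf of pre-dglas to a single global $k\llbracket Q\rrbracket$-linear pre-dgla, I would apply the Thom--Whitney ($\TW$) construction to an affine or \'etale hypercover: this yields an object whose cohomology computes the hypercohomology of the complex of sheaves, while preserving both the bracket and the predifferential in a sufficiently functorial way.

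The second step relates $\mathrm{Def}_{L^\bullet}$ to deformations of the Gerstenhaber algebra structure. For a fixed $A \in \mathbf{Art}_Q$ I would interpret a solution $\eta \in L^1 \otimes \m_A$ of the modified Maurer--Cartan equation $d\eta + \tfrac{1}{2}[\eta,\eta] + \ell = 0$ as deforming the predifferential of the polyvector-field complex to $d + [\eta,-]$, and check that the $+\ell$ term is precisely what makes $\eta = 0$ fail to be a solution while making every genuine deformation correspond to an honest MC element. Gauge equivalence should then match isomorphism of the deformed Gerstenhaber/Batalin--Vilkovisky structures. This part is largely formal once the pre-dgla formalism of the Appendix is in place, so I expect it to reduce to bookkeeping with the $\TW$ resolution.

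The third and geometrically essential step is to identify deformations of the Gerstenhaber algebra of log polyvector fields with genuine log smooth deformations in the sense of F.~Kato. Here I would argue \'etale-locally, using the local structure of a log smooth saturated morphism (toric charts $Q \to P$ giving local models of the form $\Spec(k[P])$), together with Kato's \'etale-local existence and uniqueness of log smooth deformations, to set up a local bijection between deformed algebra structures and local deformations, and between gauge transformations and isomorphisms. The Cartier type (saturated) hypothesis is crucial precisely here: it guarantees flat, reduced, Cohen--Macaulay fibers, so that $\Theta_{X_0/S_0}$ and its exterior powers are locally free of the expected rank and behave compatibly with base change, which is what makes the local comparison clean. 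Globalizing via the cocycle data encoded in the $\TW$ model then yields that a global MC class is the same as a compatible system of local deformations with gluing isomorphisms satisfying the cocycle condition, i.e.\ an isomorphism class of global log smooth deformations; naturality in $A$ is inherited from the functoriality of all constructions involved.

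The main obstacle I anticipate is exactly this third step: proving that abstract deformations of the Gerstenhaber algebra really coincide with Kato's log smooth deformations, rather than merely having the same tangent--obstruction formalism. One must show both that no spurious classes are introduced by the Thom--Whitney resolution (a coherence/higher-gauge check) and that the purely algebraic deformation of polyvector fields can be integrated back to a deformation of the log scheme itself, which requires careful use of the local charts and of the saturatedness to control flatness of the deformed object. Bridging this gap between the algebraic and the geometric sides is the technical heart of the argument, whereas the construction of $L^\bullet$ and the pre-dgla manipulations are comparatively routine.
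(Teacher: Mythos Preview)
Your overall architecture---factoring through deformations of the Gerstenhaber algebra of polyvector fields, using a Thom--Whitney resolution, and identifying Maurer--Cartan solutions with differentials---matches the paper. But your construction of $L^\bullet$ and your description of where the predifferential $d$ and the element $\ell$ come from are wrong, and this is a genuine gap, not a matter of presentation.

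There is no ``lift of the log de Rham differential to polyvector fields'' in play, and $\ell$ is not ``built from the log structure'' on $X_0$. The sheaf $G^\bullet_{X_0/S_0}=\bigwedge^{-\bullet}\Theta^1_{X_0/S_0}$ carries no natural nonzero derivation of degree $+1$; applying Thom--Whitney to it over the central fiber yields an honest dgla over $k$ with $\ell=0$, which cannot detect the obstruction you describe in the introduction. The paper instead works over \emph{every} base $S\in\mathbf{Art}_Q$ simultaneously: on each affine $V_\alpha$ one takes the (essentially unique) local log smooth deformation $V_\alpha/S$, forms $G^\bullet_{V_\alpha/S}$, takes its Thom--Whitney resolution, and shows that these local bigraded Gerstenhaber algebras glue---once the differential is forgotten---to a global object $\PV_{X_0/S}$, unique up to (non-canonical) isomorphism. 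The predifferential on $\PV_{X_0/S}$ is then \emph{chosen}: it is the \v{C}ech-type Thom--Whitney differential on each patch, corrected by terms $[\eta_\alpha,-]$ coming from the failure of the patching isomorphisms to respect those differentials. The element $\ell$ measures exactly the failure of such a global choice to square to zero, i.e.\ the failure of the local deformations to admit a coherent global gluing. Finally $L^\bullet$ is obtained as $\varprojlim_k \Gamma(X_0,\PV^{-1,\bullet}_{X_0/S_k})$ along the tower $S_0\hookrightarrow S_1\hookrightarrow\cdots$, with predifferentials chosen compatibly by a lifting argument. None of this is visible if you only look at the central fiber.

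Two smaller points. First, the role of saturatedness is not to make $\Theta^1_{X_0/S_0}$ locally free---that already holds for any log smooth morphism---but to ensure that the strict locus is scheme-theoretically dense, which is what gives $(-1)$-injectivity of $G^\bullet$ and hence that gauge transforms are faithful (this is used repeatedly, e.g.\ to identify geometric automorphisms with gauge transforms). Second, your step~3 is indeed the geometric heart, and the paper handles it by an explicit $\exp/\log$ correspondence between automorphisms of a log smooth thickening and log derivations with values in the ideal of the thickening; this is what lets one pass back and forth between gluings of the $V_\alpha/S$ and gauge transforms of the local Gerstenhaber algebras. Your plan to ``integrate back'' via toric charts is headed in the right direction but would need this ingredient made precise.
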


Since $\mathrm{Def}_{L^\bullet}$ is a deformation functor with tangent space isomorphic to $H^1(X_0,\Theta^1_{X_0/S_0})$, we also recover the existence of a hull in case $f_0$ is proper. 

In case $Q = 0$, the pdgla $L^\bullet_{X_0/S_0}$ is in fact a dgla. Here Theorem~\ref{log-smooth-dgla} is only a slight generalization of existing results.  E.g.~if $D \subset X$ is a normal crossing divisor in a smooth variety, a dgla controlling divisorial deformations (which correspond to log smooth deformations here) has been studied in \cite{Katzarkov2008} and \cite{Iacono2015}. A variant of this are the compactified Landau--Ginzburg models of \cite{Katzarkov2017}. This paper uses the $L_\infty$-approach, which is related to our dgla by homotopy transfer results like \cite[Thm.~3.2]{AlgebraicBTT2010} going back to \cite{Kadeishvili1982}.

\subsubsection*{The Strategy of the Proof}

A purely algebraic construction of a dgla controlling \emph{smooth} deformations is used in \cite{AlgebraicBTT2010}, but the method is not sufficient to prove Theorem~\ref{log-smooth-dgla} because it relies on regluing the trivial smooth deformation. Instead, we prove the theorem in the following three steps:

$\mathbf{LD} \cong \mathbf{GD}$. We study Gerstenhaber algebras of polyvector fields. Given a deformation $f: X \to S$ of $f_0: X_0 \to S_0$, the exterior powers of the relative log derivations $\Theta^1_{X/S}$ can be endowed with a Schouten--Nijenhuis bracket, giving rise to the Gerstenhaber algebra $G^\bullet_{X/S}$. We view it as a deformation of the Gerstenhaber algebra $G^\bullet_{X_0/S_0}$ associated to the central fiber $f_0$. After fixing an affine cover $\{V_\alpha\}$ of $X_0$, every deformation of $G^\bullet_{X_0/S_0}$ must be locally isomorphic to the Gerstenhaber algebra $G^\bullet_{V_\alpha/S}$ of the unique log smooth deformation of $V_\alpha/S_0$ (which we denote by $V_\alpha/S$ by abuse of notation). It turns out that gluings of the $G^\bullet_{V_\alpha/S}$ (encoded by a deformation functor $GD_{X_0/S_0}$) are equivalent to actual log smooth deformations $f: X \to S$, setting up an equivalence of deformation functors $LD_{X_0/S_0} \cong GD_{X_0/S_0}$. The equivalence heavily depends on a careful study of automorphisms of log smooth deformations, which we carry out in Section~\ref{inf-auto}. 

$\mathbf{GD} \cong \mathbf{TD}$. In the next step, the abstract setup of \cite{ChanLeungMa2019} enters. The Gerstenhaber algebra $G^\bullet_{X_0/S_0}$ corresponds to $0$-th order data in \cite[Defn.~2.9]{ChanLeungMa2019}, and the $G^\bullet_{V_\alpha/S}$ correspond to the higher order data in \cite[Defn.~2.13]{ChanLeungMa2019}. The patching data in \cite[Defn.~2.15]{ChanLeungMa2019} correspond to isomorphisms on $V_{\alpha} \cap V_{\beta}$ which are induced by geometric isomorphisms of log smooth deformations. 

There is no canonical gluing of the $G^\bullet_{V_\alpha/S}$ (that would correspond to something like a trivial deformation). Instead, we perform a Thom--Whitney resolution; it yields a bigraded Gerstenhaber algebra $\TW^{\bullet,\bullet}(G^\bullet_{V_\alpha/S})$ with a non-trivial differential $d: \TW^{p,q} \to \TW^{p,q + 1}$. Once we forget this differential, there is---up to isomorphism---a unique gluing $\PV_{X_0/S}$ of the affine patches $\TW^{\bullet,\bullet}(G^\bullet_{V_\alpha/S})$, essentially corresponding to \cite[Lemma 3.21]{ChanLeungMa2019}. Our proof shows the cohomological principles behind the explicit-constructive proof in \cite[Lemma 3.21]{ChanLeungMa2019}. 

Given a gluing of the $G^\bullet_{V_\alpha/S}$, its Thom--Whitney resolution endows $\PV_{X_0/S}$ with a differential $d: \PV_{X_0/S}^{p,q} \to \PV_{X_0/S}^{p,q+1}$ once we have chosen an isomorphism $\PV_{X_0/S} \cong \TW^{\bullet,\bullet}(G^\bullet_{V_\alpha/S})$ of bigraded sheaves. Conversely, every differential on $\PV_{X_0/S}$ gives a gluing of the $G^\bullet_{V_\alpha/S}$ by taking cohomology. This sets up an equivalence of deformation functors $GD_{X_0/S_0} \cong TD_{X_0/S_0}$ where the latter classifies differentials.

$\mathbf{TD} \cong \mathbf{Def}$. Relaxing the condition $d^2 = 0$ on a differential, we obtain the notion of \emph{predifferential}. We prove that there is a compatible way to endow the $\PV_{X_0/S}$ with a predifferential, corresponding to $\bar\partial_\alpha + [\mathfrak{d}_\alpha,-]$ in \cite[Thm.~3.34]{ChanLeungMa2019}. Once we have a predifferential, the defect of another predifferential to be a differential is measured by a Maurer--Cartan equation, corresponding to the so-called classical Maurer--Cartan equation in \cite[Defn.~5.10]{ChanLeungMa2019}. This sets up the final equivalence $TD_{X_0/S_0} \cong \mathrm{Def}_{L^\bullet}$. The geometric \v{C}ech gluing in \cite[§5.3]{ChanLeungMa2019} loosely corresponds to going back from a Maurer--Cartan solution to glue the $G^\bullet_{V_\alpha/S}$. 

Our Lie algebra $L^\bullet_{X_0/S_0}$ is essentially the $(-1,*)$-part of the differential graded Batalin--Vilkovisky algebra $PV^{*,*}(X)$ in \cite[Thm.~1.1]{ChanLeungMa2019}. What is new in our paper, is that it actually controls log smooth deformations. The main technical difference is that whereas \cite{ChanLeungMa2019} uses a countable covering $\{U_i\}$ of $X_0$ and sticks to sections over these opens, we consequently stick to sheaves on $X_0$. Therefore, we hope to help the algebraic geometer to get through the ideas of \cite{ChanLeungMa2019}.

\subsubsection*{An Example}

In Section~\ref{ex-sec}, we compute and discuss the $k\llbracket \NN\rrbracket$-linear pdgla $L^\bullet_{C_0/S_0}$ of a proper log smooth curve $f_0: C_0 \to S_0$ over $S_0 = \Spec (\NN \to k)$. The computation suggests that, in general, it is very difficult to compute $L^\bullet_{X_0/S_0}$ explicitly.

\subsubsection*{Outlook}

Beyond the scope of this paper, it should be possible to use the techniques of \cite[Thm.~1.2]{ChanLeungMa2019} to prove that in case $f_0: X_0 \to S_0$ is log Calabi--Yau, i.e.~$\Omega^d_{X_0/S_0} \cong \cO_{X_0}$, it is possible to take $\ell = 0$, i.e., $L^\bullet_{X_0/S_0}$ is actually a dgla. This is unobstructedness in the sense of \cite{ChanLeungMa2019}. Namely, then $\eta = 0$ is a Maurer--Cartan element, corresponding to a distinguished deformation over every $S \in \mathbf{Art}_Q$. In particular, one can construct a limit over $k\llbracket Q\rrbracket$ and ask for an algebraization of that formal scheme. However, this does \emph{not} imply unobstructedness in the sense of a Bogomolov--Tian--Todorov theorem. Namely, this would require $LD_{X_0/S_0}(S') \to LD_{X_0/S_0}(S)$ to be surjective for any thickening $S \to S'$ in $\mathbf{Art}_Q$.

When we finished the first version of this article, we hoped to apply Theorem~\ref{log-smooth-dgla} to obtain a Bogomolov--Tian--Todorov theorem in log geometry. Namely, in \cite{AlgebraicBTT2010}, the classical Bogomolov--Tian--Todorov theorem is obtained by proving that the dgla controlling $\mathrm{Def}_X$ is homotopy abelian. In fact it is possible---under suitable hypotheses---to adapt their methods and use the abstract Bogomolov--Tian--Todorov theorem of \cite{AbstractBTT2017} to prove the central fiber $L^\bullet_{X_0/S_0} \otimes_{k\llbracket Q\rrbracket} k$ homotopy abelian. This is now proven in our preprint \cite{FeltenPetracci2020} and shows a log Bogomolov--Tian--Todorov theorem in case $Q = 0$; it generalizes results on the deformation of pairs $(X,D)$ in \cite[Lemma 4.19]{Katzarkov2008} and \cite{Iacono2015}. Similarly, the Bogomolov--Tian--Todorov theorem for compactified Landau--Ginzburg models in \cite{Katzarkov2017} is a variant of this theme. The case $Q \not= 0$, which was completely open when we finished the first version of this article, is now settled in \cite{FeltenPetracci2020} with an argument in local algebra; it is essentially a consequence of the unobstructedness results by Chan--Leung--Ma in \cite{ChanLeungMa2019}.

Our method is not bound to the log smooth case. E.g.~it should also apply to the deformation theory of log toroidal families of \cite{FFR2019}, which are a generalization of log smooth families which does not need to be log smooth everywhere. The key missing step for the general situation is local uniqueness of deformations, which is only known for some types of log toroidal families, see \cite[Thm.~6.13]{FFR2019}. In particular, once the theory is established, we would get the existence of a hull of the log toroidal deformation functor. In less generality, the existence of a hull has been recently achieved by Ruddat--Siebert in \cite[Thm.~C.6]{RS2019} for divisorial deformations of toric log Calabi--Yau spaces. In this paper, we restrict ourselves to the log smooth setting for simplicity.

There has been recent interest in Gerstenhaber algebras of polyvector fields also in tropical geometry. In \cite{Mandel2019}, integral polyvector fields on algebraic tori are studied in order to compute Gromov--Witten invariants via tropical curves. Moreover, it turns out that wall-crossing transformations, i.e., the gluing isomorphisms of the Gross--Siebert program, induce particularly simple operations on these polyvector fields. This gives a new tropical encoding of the Gross--Siebert gluing on the level of Gerstenhaber algebras analogous to our first isomorphism $LD_{X_0/S_0} \cong GD_{X_0/S_0}$. We expect this to be related also to~\cite{LeungMaYoung2019}, where the refined scattering diagrams of~\cite{Mandel2015} are related to Maurer--Cartan solutions in an appropriate dgla.  

In \cite{Kato1998}, F. Kato introduces functors of \emph{log} Artin rings as a more natural framework for infinitesimal log smooth deformation theory. The question which generalized dglas control these functors is subject to future studies.

Finally, it would be interesting how deformation quantization relates to log smooth deformation theory, relating the base $k\llbracket\hbar\rrbracket$ to $k\llbracket\NN\rrbracket$. Deformation quantization of affine toric varieties, which are the building blocks of log smooth morphisms, has been achieved recently in \cite{Filip2018}.

\subsubsection*{Acknowledgement}

This paper is a byproduct which arose from studying \cite{ChanLeungMa2019} in order to apply it in \cite{FFR2019}. I owe many ideas of the present paper to \cite{ChanLeungMa2019}. I thank my collaborator Matej Filip of \cite{FFR2019} for pointing me to the subject and especially the idea to control logarithmic deformations by some sort of dgla.
I thank my PhD advisor Helge Ruddat for encouraging this work and for constant support. I thank Ziming Ma for valuable comments on a first draft. I thank Bernd Siebert for help with the introduction. I thank the anonymous referee for careful reading and many helpful recommendations. Finally, I thank JGU Mainz for its hospitality and Carl-Zeiss-Stiftung for financial support.

\section{Infinitesimal Automorphisms}\label{inf-auto}

The relationship between automorphisms of first order deformations and derivations is well-known. In this section, we determine higher infinitesimal automorphisms of deformations in terms of derivations, i.e., automorphisms of higher order deformations. In \cite{AlgebraicBTT2010}, higher infinitesimal deformations play a crucial role in proving that classical smooth deformations are controlled by a dgla, and we need them as well. Assume we have a surjection $A' \to A$ in $\mathbf{Art}_Q$ with kernel $I \subset A'$ inducing a thickening $S \to S'$, and assume a Cartesian diagram 
\[
 \xymatrix{
 X_0 \ar[r] \ar[d]^{f_0} & X \ar[r] \ar[d]^f & X' \ar[d]^{f'} \\
 S_0 \ar[r] & S \ar[r] & S' \\
 }
\]
of saturated log smooth morphisms. The ideal sheaf of $X \subset X'$ is 
$$\I = \I_{X'/X} = I \cdot \cO_{X'}$$
because $f'$ is flat (as is every integral log smooth morphism). Elements  of the sheaf of automorphisms $\A ut_{X'/X}$ of $X'$ which are compatible with $f': X' \to S'$ and induce the identity on $X$ are pairs $(\phi,\Phi)$ where 
$$\phi: \cO_{X'} \to \cO_{X'} \qquad \mathrm{and} \qquad \Phi: \M_{X'} \to \M_{X'}$$
are the constituting homomorphisms. As we will see below, the sheaf of groups $\A ut_{X'/X}$ is isomorphic to the sheaf $\D er_{X'/S'}(\I)$ of relative log derivations $(D, \Delta)$ with values in $\I$, i.e., $D: \cO_{X'} \to \I$ is a derivation and $\Delta: \M_{X'} \to \I$ is its log part.
This is a sheaf of Lie algebras as a subalgebra of $\Theta^1_{X'/S'}$ which
is filtered by 
$$F^k := F^k\D er_{X'/S'}(\I) := \D er_{X'/S'}(\I^k) \subset \D er_{X'/S'}(\I)$$
for $k \geq 1$, the sheaf of derivations with values in $\I^k \subset \I$.
The ideal $\I$ is generated by elements in the image of 
$f'^{-1}\cO_{S'} \to \cO_{X'}$, so if $(D,\Delta) \in F^k\D er_{X'/S'}(\I)$, then $D(\I^\ell) \subset \I^{k + \ell}$. This shows $[F^k,F^\ell] \subset F^{k + \ell}$, so the 
lower central series of $\D er_{X'/S'}(\I)$ is eventually 
$0$, and it is a sheaf of nilpotent Lie algebras. In particular, following e.g.~\cite{ManettiComplexDefo2004}, 
the Baker--Campbell--Hausdorff formula starting with 
$$\theta * \xi = \theta + \xi + \frac{1}{2}[\theta,\xi] + ...$$
turns $\D er_{X'/S'}(\I)$ into a sheaf of groups.

The sheaves $\D er_{X'/S'}(\I)$ and $\A ut_{X'/X}$ have 
classical analogs, which we denote by $\underline{\D er}_{X'/S'}(\I)$ and $\underline{\A ut}_{X'/X}$. They are the 
classical relative derivations with values in $\I$ respective the automorphisms of the underlying scheme $\underline X'$ over $\underline S'$. Using the Baker--Campbell--Hausdorff formula again, we find $\underline{\D er}_{X'/S'}(\I)$ a sheaf of groups. There are group isomorphisms
\[
 \xymatrix{
  \underline{\D er}_{X'/S'}(\I)
  \ar@<0.5ex>[r]^-{\mathrm{exp}} &
  \underline{\A ut}_{X'/X} \ar@<0.5ex>[l]^-{\mathrm{log}} \\
 }
\]
given by plugging in the derivation $D: \cO_{X'} \to \I$ into the power series expansion of the exponential (at $0$) respective the automorphism $\phi: \cO_{X'} \to \cO_{X'}$ into the expansion of the logarithm (at $1$).

\begin{rem}
 This pair of inverse group isomorphisms seems to be folklore; it is used e.g.~implicitly in \cite[Thm.~5.3]{AlgebraicBTT2010}. However, we are not aware of any explicit reference where it has been proven in the setting of sheaves of automorphisms of flat deformations. That the construction above yields maps between $\underline{\D er}_{X'/S'}(\I)$ and $\underline{\A ut}_{X'/X}$ follows from our explicit computation below; that they are inverse to each other follows from the fact that the two power series $\mathrm{exp}(T)$ and $\mathrm{log}(1 + T)$ are inverse to each other in $\QQ\llbracket T\rrbracket$---this argument has been used e.g.~in \cite[Thm.~7.2]{Serre1964}. The maps are homomorphisms, heuristically, because the Baker--Campbell--Hausdorff formula just makes explicit the product we get when composing the exponentials as endomorphisms.
\end{rem}

We extend the above picture to the diagram 
\begin{equation}\label{inf-auto-diagram}
 \xymatrix{
  \D er_{X'/S'}(\I) \ar@<0.5ex>[r]^-{\mathrm{exp}} \ar[d]^{\iota_D} & 
  \A ut_{X'/X} \ar@<0.5ex>[l]^-{\mathrm{log}} \ar[d]^{\iota_A} \\
  \underline{\D er}_{X'/S'}(\I)
  \ar@<0.5ex>[r]^-{\mathrm{exp}} &
  \underline{\A ut}_{X'/X} \ar@<0.5ex>[l]^-{\mathrm{log}} \\
 }
\end{equation}
where the vertical arrows are the forgetful maps. Given $(D,\Delta) \in \D er_{X'/S'}(\I)$, we define 
$(\phi,\Phi) = \mathrm{exp}_{X'/X}(D,\Delta)$ by the formulas
\begin{align}
 \phi: \cO_{X'} \to \cO_{X'}, \quad \phi(a) &= \sum_{n = 0}^\infty \frac{D^n(a)}{n!}\nonumber \\
 \Phi: \M_{X'} \to \M_{X'}, \quad \Phi(m) &= m + \alpha^{-1}\left(\sum_{n = 0}^\infty \frac{[\Delta(m) + D]^n(1)}{n!}\right) \nonumber 
\end{align}
where the series are actually finite sums.
The symbol $\Delta(m)$ denotes the multiplication operator with this element.

\begin{lemma}
 We have $\mathrm{exp}_{X'/X}(D,\Delta) \in \A ut_{X'/X}$.
\end{lemma}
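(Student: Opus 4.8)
The plan is to verify, one condition at a time, that $(\phi,\Phi) = \mathrm{exp}_{X'/X}(D,\Delta)$ is a well-defined endomorphism of the log scheme $X'$, that it is a morphism over $S'$ inducing the identity on $X$, and finally that it is invertible. The standing tool is nilpotence: since $I\subset A'$ is nilpotent we have $\I^N=0$ for some $N$, and together with the estimate $D(\I^\ell)\subset\I^{\ell+1}$ recalled above this gives $D^n(\cO_{X'})\subset\I^n$, so all series occurring are finite sums and the operators $D$ and $\Delta(m)+D$ (where, as in the statement, $\Delta(m)$ also denotes multiplication by $\Delta(m)$) are nilpotent with well-defined exponentials. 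I would first dispatch the structure-sheaf part: because $D$ is an $f'^{-1}\cO_{S'}$-linear derivation, the Leibniz expansion $D^n(ab)=\sum_k\binom{n}{k}D^k(a)D^{n-k}(b)$ shows that $\phi=\exp(D)$ is a homomorphism of $\cO_{S'}$-algebras; it is the identity modulo $\I$ since $\phi(a)-a\in\I$, and it is invertible with inverse $\exp(-D)$ because $D$ and $-D$ are commuting nilpotent operators.

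The heart of the matter is the log part $\Phi$, and this is where I expect the real work. First one checks that $u_m:=\sum_{n\ge 0}[\Delta(m)+D]^n(1)/n!$ lies in $1+\I$: indeed $[\Delta(m)+D](1)=\Delta(m)\in\I$ and higher terms land in higher powers of $\I$, so $u_m$ is a unit and $\alpha^{-1}(u_m)\in\M_{X'}$ is defined, making $\Phi(m)=m+\alpha^{-1}(u_m)$ meaningful. The compatibility $\alpha\circ\Phi=\phi\circ\alpha$ with the log structure then reduces, via $\alpha(\Phi(m))=\alpha(m)\cdot u_m$, to the operator identity $D^n(\alpha(m))=\alpha(m)\cdot[\Delta(m)+D]^n(1)$, which I would prove by induction on $n$ using the Leibniz rule and the defining log-derivation relation $D(\alpha(m))=\alpha(m)\,\Delta(m)$; summing over $n$ yields $\phi(\alpha(m))=\alpha(m)\,u_m=\alpha(\Phi(m))$.

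That $\Phi$ is a homomorphism of monoids is the subtlest point. From $\Delta(0)=0$ one gets $u_0=\exp(D)(1)=1$ and $\Phi(0)=0$, and additivity $\Phi(m_1+m_2)=\Phi(m_1)+\Phi(m_2)$ reduces (since $\alpha^{-1}$ is additive on units) to the operator identity $\exp(M_{\delta_1+\delta_2}+D)(1)=\exp(M_{\delta_1}+D)(1)\cdot\exp(M_{\delta_2}+D)(1)$ for $\delta_i=\Delta(m_i)$, where $M_\delta$ is multiplication by $\delta$. The obstacle here is precisely that $M_\delta$ and $D$ do \emph{not} commute, as $[M_\delta,D]=-M_{D(\delta)}$, so one cannot simply invoke $\exp(A+B)=\exp(A)\exp(B)$. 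I would instead introduce a formal parameter $t$, set $f(t)=\exp(t(M_{\delta_1}+D))(1)$ and $g(t)=\exp(t(M_{\delta_2}+D))(1)$, and observe that both $\exp(t(M_{\delta_1+\delta_2}+D))(1)$ and the product $f(t)g(t)$ satisfy the same recursion $h'(t)=(\delta_1+\delta_2)h(t)+D(h(t))$ with $h(0)=1$ (the product case using the Leibniz rule $D(fg)=D(f)g+fD(g)$); comparing coefficients of $t^n$ forces them equal, and $t=1$ gives the claim. Thus $(\phi,\Phi)$ is a morphism of log schemes.

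It remains to check the relative and invertibility conditions. The morphism is over $S'$ because $D$ kills $f'^{-1}\cO_{S'}$ and $\Delta$ kills the image of $f'^{-1}\M_{S'}$, so for such $m$ one has $u_m=\exp(D)(1)=1$ and $\Phi(m)=m$; and it induces the identity on $X$ since $\phi\equiv\mathrm{id}$ and $u_m\in 1+\I$ reduces to $1$ on $X$. For invertibility of $\Phi$ I would use the exact sequence $0\to\cO_{X'}^*\xrightarrow{\alpha^{-1}}\M_{X'}\to\overline{\M}_{X'}\to 0$: because $\Phi(m)-m=\alpha^{-1}(u_m)$ is a unit, $\Phi$ is the identity on $\overline{\M}_{X'}$, while on $\cO_{X'}^*\cong\alpha^{-1}(\cO_{X'}^*)$ the compatibility $\alpha\circ\Phi=\phi\circ\alpha$ forces $\Phi(\alpha^{-1}(v))=\alpha^{-1}(\phi(v))$, a bijection since $\phi$ is one; a short diagram chase then shows $\Phi$ bijective. (Alternatively one verifies directly that $\mathrm{exp}_{X'/X}(-D,-\Delta)$ is a two-sided inverse.) This establishes $\mathrm{exp}_{X'/X}(D,\Delta)\in\A ut_{X'/X}$.
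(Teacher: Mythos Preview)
Your proof is correct and follows essentially the same route as the paper's: the ring-automorphism step, the identity $D^n(\alpha(m))=\alpha(m)\,[\Delta(m)+D]^n(1)$ for compatibility with $\alpha$, the over-$S'$ and identity-on-$X$ checks, and invertibility via the identity on ghost sheaves together with bijectivity on units/underlying scheme (using that $\M_{X'}$ is fine) all match. The only cosmetic difference is that for the monoid homomorphism property the paper proves the coefficient identity $\frac{[\Delta(m+m')+D]^n(1)}{n!}=\sum_{k+\ell=n}\frac{[\Delta(m)+D]^k(1)}{k!}\cdot\frac{[\Delta(m')+D]^\ell(1)}{\ell!}$ directly by induction, whereas your formal-parameter ODE argument is exactly the generating-function repackaging of the same recursion.
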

\begin{proof}
 The map $\phi: \cO_{X'} \to \cO_{X'}$ is a ring automorphism 
 by the method of the classical proof of $e^xe^y = e^{x + y}$. Since 
 $D(f'^{-1}s) = 0$ for $s \in \cO_{S'}$, the morphism 
 $\psi: X' \to X'$ induced by $\phi$ satisfies 
 $f' \circ \psi = f'$, and $\psi \times_{S'} S = \mathrm{id}_X$ because $\phi(a) - a \in \I$. That $\Phi$ is a monoid 
 homomorphism follows from the identity 
 $$\frac{[\Delta(m + m') + D]^n(1)}{n!} 
 = \sum_{k + \ell = n}\frac{[\Delta(m) + D]^k(1)}{k!} \cdot 
 \frac{[\Delta(m') + D]^\ell(1)}{\ell!}$$
 which is proven by induction, and the identity 
 $$D^n(\alpha(m)) = \alpha(m) \cdot [\Delta(m) + D]^n(1)$$
 shows that $\alpha \circ \Phi = \phi \circ \alpha$, i.e., $(\phi,\Phi)$ defines a log morphism $\Psi: X' \to X'$. 
 Because $\Delta(f'^{-1}m) = 0$ for $m \in \M_{S'}$, we have 
 $f' \circ \Psi = f'$, and 
 $$\left(\sum_{n = 0}^\infty \frac{[\Delta(m) + D]^n(1)}{n!}\right) \in 1 + \I$$
 proves that $\Psi \times_{S'} S = \mathrm{id}_X$. The 
 map $\Psi$ is an isomorphism because it is one on underlying schemes and on ghost sheaves, and because $\M_{X'}$ is fine.
\end{proof}

We construct an ansatz for 
the inverse $\mathrm{log}_{X'/X}$. On the classical part,
given $\phi$, our ansatz 
$$ D: \cO_{X'} \to \I, \quad D(a) = \sum_{n = 1}^\infty \frac{(-1)^{n - 1}[\phi - \mathrm{Id}]^n(a)}{n}$$
is of course the formula of the logarithm. The sum is finite 
since $[\phi - \mathrm{Id}](\I^k) \subset \I^{k + 1}$; this identity holds by the explicit formula for $\phi - \mathrm{Id}$ because $D(\I^k) \subset \I^{k + 1}$. 
Given $\Phi$, by induction
 $$[\phi - \mathrm{Id}]^n(\alpha(m)) = \alpha(m) \cdot 
 \left[(-1)^n\sum_{k = 0}^n{n \choose k}(-1)^k\alpha(\Phi^k(m) - m)\right]$$
where $\Phi^k(m) - m \in \M_{X'}^*$ is the unique invertible by which these 
elements differ, so we make an ansatz
$$\Delta: \M_{X'} \to \I, \quad \Delta(m) = \sum_{n = 1}^\infty \left(\sum_{k = 0}^n {{n}\choose{k}} \frac{(-1)^{k + 1}\alpha(\Phi^k(m) - m)}{n}\right)$$
for the log part. Setting $\Phi(m) - m =: v \in \M_{X'}^*$, we find 
$\alpha(v) \in 1 + \I$; indeed, since $\Phi|_X = \mathrm{Id}_X$, we have $v|_X = 0$, and thus $\alpha(v)|_X = 1$. Now inductively 
$$\sigma_n := \sigma_n(\alpha(v)) := (-1)^n\sum_{k = 0}^n\left({{n}\choose{k}} (-1)^k \prod_{i = 0}^{k - 1}\phi^i(\alpha(v))\right) \in \I^{n}$$
since $\sigma_{n + 1} = \alpha(v)\phi(\sigma_n) - \sigma_n$, and we conclude that the sum 
defining $\Delta(m)$ is finite. Indeed, we have $\Delta(m) = \sum_{n = 1}^\infty \frac{(-1)^{n-1}}{n} \sigma_n(\alpha(v))$.

\begin{lemma}
 We have $\mathrm{log}_{X'/X}(\phi,\Phi)\in \D er_{X'/S'}(\I)$ where $\mathrm{log}_{X'/X}$ is defined by the above formulas.
\end{lemma}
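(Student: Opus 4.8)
The plan is to verify the four defining properties of a relative log derivation for the pair $(D,\Delta) = \mathrm{log}_{X'/X}(\phi,\Phi)$: that $D\colon \cO_{X'} \to \I$ is a derivation vanishing on $f'^{-1}\cO_{S'}$; that $\Delta$ takes values in $\I$; that $D$ and $\Delta$ are compatible via $D(\alpha(m)) = \alpha(m)\Delta(m)$; and that $\Delta\colon \M_{X'} \to \I$ is a monoid homomorphism vanishing on $f'^{-1}\M_{S'}$. The first property is already settled: $D$ is the classical logarithm $\underline{\mathrm{log}}(\phi)$, and the previously established classical isomorphism $\underline{\D er}_{X'/S'}(\I) \cong \underline{\A ut}_{X'/X}$ shows $D$ is a derivation into $\I$, while relativity is immediate because $\phi - \mathrm{Id}$ annihilates the image of $f'^{-1}\cO_{S'}$. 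That $\Delta$ lands in $\I$ was shown above ($\sigma_n \in \I^n$ and the sum is finite), and the compatibility follows by summing the recorded identity $[\phi - \mathrm{Id}]^n(\alpha(m)) = \alpha(m)\sigma_n$. Relativity of $\Delta$ is equally quick: if $m$ comes from $\M_{S'}$ then $\Phi(m) = m$, so $v = 0$, $\alpha(v) = 1$, and $\sigma_n(1) = (-1)^n(1-1)^n = 0$ for $n \geq 1$, whence $\Delta(m) = 0$.

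The real content is that $\Delta$ is additive, $\Delta(m + m') = \Delta(m) + \Delta(m')$. First I would record that the units $v_m := \Phi(m) - m \in \M_{X'}^*$ satisfy $v_{m+m'} = v_m + v_{m'}$, because $\Phi$ is a monoid homomorphism; applying $\alpha$ shows that $u_m := \alpha(v_m) \in 1 + \I$ is \emph{multiplicative}, $u_{m+m'} = u_m u_{m'}$. Next I would rewrite the log formula operator-theoretically as $\Delta(m) = \mathrm{log}(M_{u_m} \circ \phi)(1)$, where $M_{u_m}$ is multiplication by $u_m$; this makes sense because $M_{u_m}\phi - \mathrm{Id}$ raises the $\I$-adic filtration.

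The bridge between the multiplicativity of $u_m$ and the additivity of $\Delta(m)$ is the map $g\colon \I \to 1 + \I$, $g(\delta) := \mathrm{exp}(M_\delta + D)(1) = \sum_n \tfrac{1}{n!}[M_\delta + D]^n(1)$. Two facts about $g$ suffice. First, $g$ is a group homomorphism $(\I,+) \to (1+\I,\times)$: this is exactly the identity $\tfrac{1}{n!}[\delta+\delta'+D]^n(1) = \sum_{k+\ell=n}\tfrac{1}{k!}[\delta+D]^k(1)\cdot\tfrac{1}{\ell!}[\delta'+D]^\ell(1)$ used in the $\mathrm{exp}$ lemma, now read for arbitrary $\delta,\delta' \in \I$. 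Second, since $[M_\delta+D](1) = \delta$, one has $g(\delta) = 1 + \delta + (\text{higher filtration})$, so $g$ is triangular for the $\I$-adic filtration and hence bijective. The key computation I would then carry out is the operator identity $\mathrm{exp}(M_\delta + D) = M_{g(\delta)} \circ \phi$, cleanest via differentiating the one-parameter family $t \mapsto \mathrm{exp}(t(M_\delta+D))$ and using $\phi = \mathrm{exp}(D)$; taking the formal logarithm and evaluating at $1$ yields $\mathrm{log}(M_u\phi)(1) = g^{-1}(u)$ for every $u \in 1 + \I$. Combining, $\Delta(m) = g^{-1}(u_m)$, and since $u_m$ is multiplicative while $g^{-1}$ is additive (the inverse of a group isomorphism), $\Delta$ is additive.

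I expect this operator identity $\mathrm{exp}(M_\delta+D) = M_{g(\delta)}\circ\phi$—equivalently, the assertion that $\mathrm{exp}_{X'/X}$ and $\mathrm{log}_{X'/X}$ are mutually inverse on the logarithmic part—to be the main obstacle, since it is the only step genuinely entangling the multiplicative structure of $\M_{X'}$ with the additive derivation $D$; everything else is either formal or a direct rerun of the $\mathrm{exp}$ lemma. A more computational alternative would prove additivity of the twisted logarithm $u \mapsto \mathrm{log}(M_u\phi)(1)$ directly by a binomial identity on the $\sigma_n$ parallel to the one in the $\mathrm{exp}$ lemma, but the operator-theoretic argument seems cleaner and reuses work already done.
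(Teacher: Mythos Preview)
Your proposal is correct and takes a genuinely different route from the paper. The paper proves both the Leibniz rule for $D$ and the additivity of $\Delta$ by direct binomial expansion: it establishes $\chi^n(ab) = \sum_{k,\ell}\binom{n}{k,\ell}\chi^k(a)\chi^\ell(b)$ and the parallel identity $\sigma_n(u_1u_2) = \sum_{k,\ell}\binom{n}{k,\ell}\sigma_k(u_1)\sigma_\ell(u_2)$, then evaluates the resulting coefficient sum $\gamma_{k,\ell} = \sum_n \tfrac{(-1)^{n-1}}{n}\binom{n}{k,\ell}$ to see that only the ``diagonal'' terms survive. This is precisely the ``more computational alternative'' you mention at the end. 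Your approach instead factors the problem through the group isomorphism $g\colon (\I,+)\to(1+\I,\times)$, reducing additivity of $\Delta$ to multiplicativity of $u_m$ together with the operator identity $\exp(M_\delta+D)=M_{g(\delta)}\circ\phi$.

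Two small points. First, in this paper the classical fact that $D=\underline{\log}(\phi)$ is a derivation is not ``previously established'': the remark explicitly says the classical case follows from the computation in \emph{this} lemma, so deferring to it is mildly circular within the paper's logic (though harmless, since the result is folklore). Second, your triangularity argument for bijectivity of $g$ silently uses $D(\I^k)\subset\I^{k+1}$; this holds because $\I$ is generated by the image of $f'^{-1}\cO_{S'}$, on which $D$ vanishes, and is stated just before the lemma. What each approach buys: the paper's computation is entirely self-contained and simultaneously supplies the missing classical proof; your argument is more conceptual, explains \emph{why} $\Delta$ is additive (it is $g^{-1}$ applied to a multiplicative quantity), and directly recycles the identity from the $\exp$ lemma rather than re-deriving its $\sigma_n$-analogue.
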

\begin{proof}
 Given $(\phi,\Phi)$ let $(D,\Delta)$ be defined by the formulas. Setting $\chi := \phi - \mathrm{Id}$, we have 
 $\chi(\cO_{X'}) \subset \I$. 
 Clearly $D(a + b) = D(a) + D(b)$ and 
 $D(f^{-1}s) = 0$ for $s \in \cO_{S'}$ since 
 $\phi(f^{-1}s) = f^{-1}s$. We have 
 $\chi(ab) = a\chi(b) + b\chi(a) + \chi(a)\chi(b)$ and thus 
 $$\chi^n(ab) = \sum_{k,\ell \geq 0}{n \choose {k,\ell}}\chi^k(a)\chi^\ell(b), \quad
 {n \choose {k,\ell}} := \begin{cases}
                           \frac{n!}{(n - k)!(n - \ell)!(k + \ell - n)!} & 0 \leq k,\ell \leq n \leq k + \ell\\
                           0 & \mathrm{else}\\
                         \end{cases}
$$
for $n \geq 0$ by induction since ${n \choose k-1,\ell} + 
{n \choose k,\ell -1} + {n \choose k-1,\ell - 1} = {n + 1\choose k,\ell}$. Since for a polynomial $p \in \QQ[m]$ of degree $< k$ we have $\sum_{m = 0}^k (-1)^mp(m){k \choose m} = 0$, we find 
$$\gamma_{k,\ell} := \sum_{n = 1}^\infty 
\frac{(-1)^{n - 1}}{n}{n \choose k,\ell} = 
\begin{cases}
 \frac{(-1)^{k + \ell - 1}}{k + \ell} & k\ell = 0, k + \ell \geq 1 \\
 0 & \mathrm{else} \\
\end{cases}$$
and therefore 
$$D(ab) = \sum_{k,\ell \geq 0}\gamma_{k,\ell}\cdot\chi^k(a)\chi^\ell(b) = aD(b) + bD(a)$$
showing that $D: \cO_{X'} \to \I$ is a derivation. For $m_1, m_2 \in \M_{X'}$ writing $\Phi(m_i) = m_i + v_i$ we have 
$$\sigma_n(\alpha(v_1)\alpha(v_2)) = \sum_{k,\ell \geq 0}
{n \choose k,\ell}\sigma_k(\alpha(v_1))\sigma_\ell(\alpha(v_2))$$
by induction starting with $\sigma_0 = 1$, so we get 
$$\Delta(m_1 + m_2) = \sum_{k,\ell \geq 0}\gamma_{k,\ell}\cdot \sigma_k(\alpha(v_1))\sigma_\ell(\alpha(v_2)) 
= \Delta(m_1) + \Delta(m_2)$$
since $\Phi(m_1 + m_2) = m_1 + m_2 + v_1 + v_2$. We have 
$\Delta(f^{-1}n) = 0$ for $n \in \M_{S'}$ since 
$\Phi(f^{-1}n) = f^{-1}n$, and $\alpha \cdot \Delta = D \circ \alpha$ holds by construction.
\end{proof}

The forgetful maps $\iota_D, \iota_A$ in diagram \eqref{inf-auto-diagram} above are injective group homomorphisms. For, if $V' := (X')^{str} \subset X'$ is the strict locus defined in the Appendix, then $\iota_D$ 
and $\iota_A$ are isomorphisms on $V'$. Because
$$\D er_{X'/S'}(\I) \cong \cH om(\Omega^1_{X'/S'},\I)$$
and $\I \subset \cO_{X'}$ has injective restrictions to $V'$ (which is a direct consequence of Proposition~\ref{strict-dense}), we get $\iota_D$ injective. If we have two automorphisms $(\phi,\Phi)$ and $(\phi,\Phi')$, then $\Phi$ and $\Phi'$ coincide on ghost sheaves---this is because the restriction map $\overline\M_{X'} \to \overline\M_X\to \overline\M_{X_0}$ of a log deformation is an isomorphism---so there is an invertible $u$ with 
$\Phi'(m) = u + \Phi(m)$. Because $\iota_A|_{V'}$ is an isomorphism, we have $u|_{V'} = 0$, and thus $u = 0$.
We conclude:

\begin{prop}
 The maps 
 \[
 \xymatrix{
  \D er_{X'/S'}(\I) \ar@<0.5ex>[r]^-{\mathrm{exp}} 
  & \A ut_{X'/X} \ar@<0.5ex>[l]^-{\mathrm{log}} \\
 }
\]
are inverse isomorphisms of sheaves of groups.
\end{prop}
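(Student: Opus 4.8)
The plan is to deduce the logarithmic statement from its classical counterpart by a diagram chase through \eqref{inf-auto-diagram}, since everything needed has already been assembled: the two preceding lemmas show that $\mathrm{exp}_{X'/X}$ and $\mathrm{log}_{X'/X}$ land in $\A ut_{X'/X}$ respectively $\D er_{X'/S'}(\I)$; the bottom row of \eqref{inf-auto-diagram} is an inverse pair of group isomorphisms (the classical folklore statement of the Remark); and the forgetful maps $\iota_D, \iota_A$ are injective group homomorphisms. The idea is that injectivity of the forgetful maps lets the classical facts propagate upward, so no separate computation with the log parts is required.

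First I would record that both squares in \eqref{inf-auto-diagram} commute, which is a formality because the forgetful maps discard only the log datum. Indeed, the classical part of $\mathrm{exp}_{X'/X}(D,\Delta)$ is literally $\phi(a) = \sum_n D^n(a)/n!$, which is the classical exponential of $\iota_D(D,\Delta) = D$; and the classical part of $\mathrm{log}_{X'/X}(\phi,\Phi)$ is literally $\sum_n (-1)^{n-1}[\phi - \mathrm{Id}]^n/n$, the classical logarithm of $\iota_A(\phi,\Phi) = \phi$. Hence $\iota_A \circ \mathrm{exp}_{X'/X} = \mathrm{exp} \circ \iota_D$ and $\iota_D \circ \mathrm{log}_{X'/X} = \mathrm{log} \circ \iota_A$ hold by inspection.

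Next I would chase the two compositions. For $\mathrm{log}_{X'/X} \circ \mathrm{exp}_{X'/X} = \mathrm{id}$, apply $\iota_D$ and use commutativity twice together with the classical identity $\mathrm{log} \circ \mathrm{exp} = \mathrm{id}$:
$$\iota_D\big(\mathrm{log}_{X'/X}(\mathrm{exp}_{X'/X}(D,\Delta))\big) = \mathrm{log}\big(\mathrm{exp}(\iota_D(D,\Delta))\big) = \iota_D(D,\Delta),$$
and conclude by injectivity of $\iota_D$. The symmetric computation with $\iota_A$ and $\mathrm{exp} \circ \mathrm{log} = \mathrm{id}$ gives $\mathrm{exp}_{X'/X} \circ \mathrm{log}_{X'/X} = \mathrm{id}$, so the two maps are mutually inverse bijections of sheaves. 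To see they are morphisms of sheaves of groups, I would check that $\mathrm{exp}_{X'/X}$ respects the Baker--Campbell--Hausdorff product by the same chase: for sections $a, b$ of $\D er_{X'/S'}(\I)$,
$$\iota_A\big(\mathrm{exp}_{X'/X}(a * b)\big) = \mathrm{exp}(\iota_D a * \iota_D b) = \mathrm{exp}(\iota_D a) \circ \mathrm{exp}(\iota_D b) = \iota_A\big(\mathrm{exp}_{X'/X}(a) \circ \mathrm{exp}_{X'/X}(b)\big),$$
using that $\iota_D$ is a homomorphism, that the classical $\mathrm{exp}$ is one, and that $\iota_A$ is one. Injectivity of $\iota_A$ then yields the homomorphism property for $\mathrm{exp}_{X'/X}$, and its two-sided inverse $\mathrm{log}_{X'/X}$ is automatically a homomorphism as well.

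The main obstacle is not the chase but having the classical statement available in the required sheaf-theoretic generality, namely that on the strict locus $V'$—where $\iota_D$ and $\iota_A$ are isomorphisms and the log structure contributes nothing—the classical $\mathrm{exp}$ and $\mathrm{log}$ are genuinely inverse group isomorphisms of $\underline{\D er}_{X'/S'}(\I)$ and $\underline{\A ut}_{X'/X}$. This is precisely the folklore fact recorded in the Remark. Once it is granted, the injectivity of the forgetful maps—which holds because sections restrict injectively to $V'$ by Proposition~\ref{strict-dense}—transports both the bijectivity and the group law from the classical bottom row to the logarithmic top row, and no further manipulation of the log parts $\Delta$ is needed.
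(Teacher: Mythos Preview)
Your proposal is correct and follows essentially the same approach as the paper: the paper assembles the diagram~\eqref{inf-auto-diagram}, proves the two lemmas showing the maps are well-defined, establishes injectivity of $\iota_D$ and $\iota_A$ via restriction to the strict locus, and then simply writes ``We conclude'' before stating the proposition. Your write-up makes explicit the diagram chase (commutativity of the squares, the two compositions, and the homomorphism property) that the paper leaves to the reader.
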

\begin{rem}
 The case of first order deformations, i.e., $\I^2 = 0$, 
 has been described in \cite[Lemma 2.10]{GrossSiebertII}. Indeed, this result has been our main inspiration to find the description of infinitesimal automorphisms of higher order. Automorphisms of all orders of log \emph{rings} have been studied in \cite[Prop.~2.14]{Gross2011}. 
\end{rem}

\section{Gerstenhaber Algebra of Polyvector Fields}\label{Gerstenhaber-algebra-sec}

Given $S \in \mathbf{Art}_Q$, let $f: X \to S$ be log smooth and saturated. The polyvector fields $G^\bullet_{X/S} := \bigwedge^{-\bullet}\Theta^1_{X/S}$ in \emph{negative} grading, i.e., concentrated in degrees $-d \leq \bullet \leq 0$ for $d$ the relative dimension, form a Gerstenhaber algebra. This means it is endowed with two bilinear operations $\wedge$ and $[-,-]$, which are graded in the sense that (for $\G^\bullet = G^\bullet_{X/S}$)
$$\G^p \wedge \G^q \subset \G^{p + q} \quad \mathrm{and} 
\quad [\G^p,\G^q] \subset \G^{p + q + 1}.$$
They satisfy the relations
\begin{itemize}
 \item $x \wedge (y \wedge z) = (x \wedge y) \wedge z
 \quad \mathrm{and} \quad x \wedge y = (-1)^{|x||y|}(y \wedge x)$
 \item $[x,y \wedge z] = [x,y] \wedge z + (-1)^{(|x| + 1)|y|} 
 y \wedge [x,z]$
 \item $[x,y] = - (-1)^{(|x| + 1)(|y| + 1)}[y,x]$
 \item $[x,[y,z]] = [[x,y],z] + 
 (-1)^{(|x| + 1)(|y| + 1)} [y,[x,z]]$
\end{itemize}
where $|x|$ denotes the degree of the homogeneous element $x$. For the bracket $[-,-]$ we take the \emph{negative} $-[-,-]_{sn}$ of the Schouten--Nijenhuis bracket. Recall that the latter one is the unique bracket satisfying the above relations and such that $[g,h]_{sn} = 0$ for functions $g,h \in \cO_X$, such that $[\theta, \xi]_{sn} = [\theta,\xi]$ is the Lie bracket for vector fields $\theta,\xi \in \Theta^1_{X/S}$, and such that  $[\theta,g]_{sn} = \langle dg, \theta\rangle$ for the natural pairing $\langle \cdot,\cdot\rangle$ of vector fields and differential forms. Our grading convention as well as the $(-1)$-sign in the bracket follow \cite{ChanLeungMa2019}.

In the spirit of \cite[Defn.~2.9]{ChanLeungMa2019}, we say a Gerstenhaber algebra $\G^\bullet$ is \emph{$(-1)$-injective} if the map 
$$\G^{-1} \to \cH om(\G^0,\G^0), \quad \theta \mapsto [\theta,-]$$
is injective. Using Proposition~\ref{strict-dense} of the Appendix, we find that $G^\bullet_{X/S}$ is $(-1)$-injective. Indeed, on the strict locus $X^{str} \subset X$ the sheaf $G^{-1}_{X/S}$ represents classical relative derivations on $\cO_X$.

\subsubsection*{The Gerstenhaber Algebra and Deformations}

Let $S \to S'$ be a thickening given by a surjection $A' \to A$ in $\mathbf{Art}_Q$, and let $f': X' \to S'$ be a deformation of $f: X \to S$. Let $\I_{X'/X} = I \cdot \cO_{X'}$ be the ideal sheaf. We obtain an exact sequence 
\begin{equation}\label{rel-def-sequence}
 0 \to \I_{X'/X} \cdot G^\bullet_{X'/S'} \to G^\bullet_{X'/S'} \to G^\bullet_{X/S} \to 0
\end{equation}
of Gerstenhaber algebras. Here the right hand map is induced by pulling back homomorphisms $h: \Omega^1_{X'/S'} \to \cO_{X'}$ along $c: X \to X'$. Elements $\theta \in \I_{X'/X} \cdot G^{-1}_{X'/S'}$ induce \emph{gauge transforms} 
\begin{equation}\label{gauge-def-formula}
 \mathrm{exp}_\theta: G^\bullet_{X'/S'} \to G^\bullet_{X'/S'}, \quad \xi \mapsto \sum_{k = 0}^\infty \frac{([\theta,-])^k(\xi)}{k!},
\end{equation}
which are well-defined Gerstenhaber algebra automorphisms over $G^\bullet_{X/S}$ since $\I_{X'/X}$ is nilpotent. If $\mathrm{exp}_\theta = \mathrm{exp}_\xi$, then $\theta = \xi$ due to $(-1)$-injectivity. Indeed, with the Baker--Campbell--Hausdorff product on $\I_{X'/X} \cdot G^{-1}_{X'/S'}$ the map $\theta \mapsto \mathrm{exp}_\theta$ becomes a group homomorphism, and if $\mathrm{exp}_\theta = \mathrm{Id}$, then $\theta = 0$ by induction since it holds for small extensions $S \to S'$.

Automorphisms of $G^\bullet_{X'/S'}$ can be induced by geometric automorphisms as well. Namely, for two deformations $f'_i: X'_i \to S' \ (i = 1,2)$ and an isomorphism $\varphi: X_1' \to X_2'$ over $f: X \to S$, there is a natural map 
$$T_\varphi: G^\bullet_{X_2'/S'} \to \varphi_*G^\bullet_{X_1'/S'}$$
of Gerstenhaber algebras which is induced by 
$\cO_{X_2'} \to \varphi_*\cO_{X_1'}$ and the pullback of homomorphisms $h: \Omega^1_{X_2'/S'} \to \cO_{X_2'}$ along $\varphi$. More precisely, $h \in \Theta^1_{X_2'/S'}$ gives rise to a composition
$$\Omega^1_{X_1'/S'} \cong \varphi^*\Omega^1_{X_2'/S'} \xrightarrow{\varphi^*h} \varphi^*\cO_{X_2'} \cong \cO_{X_1'},$$
and thus to an element $T_\varphi(h) \in \varphi_*\Theta^1_{X_1'/S'}$. 

\begin{rem}
 The inverse $\varphi^{-1}$ induces a map $d(\varphi^{-1}): \Theta^1_{X_2'/S'} \to (\varphi^{-1})^*\Theta^1_{X_1'/S'}$ whose composition with the canonical map $(\varphi^{-1})^*\Theta^1_{X_1'/S'} \to \varphi_*\Theta^1_{X_1'/S'}$ coincides with $T_\varphi$. In fact, when we write $\Omega_i := \Omega^1_{X_i'/S'}$ and $\cO_i := \cO_{X_i'}$, we have maps $(\varphi^{-1})^*\Omega_1 \to \Omega_2$ and $\Omega_2 \to \varphi_*\Omega_1$; they induce the diagram
 \[
  \xymatrix{
   \cH om(\Omega_2,\cO_2) \ar[r] \ar[dr] & \cH om((\varphi^{-1})^*\Omega_1,(\varphi^{-1})^*\cO_1) & (\varphi^{-1})^*\cH om(\Omega_1,\cO_1) \ar[l] \ar[d] \\
   & \varphi_*\cH om(\varphi^*\Omega_2,\varphi^*\cO_2) \ar[r] & \varphi_*\cH om(\Omega_1,\cO_1) \\
  }
 \]
 of isomorphisms of $\cO_{X_2'}$-modules, which is commutative since the composition $(\varphi^{-1})^*\Omega_1 \to \Omega_2 \to \varphi_*\Omega_1$ is the canonical isomorphism.
\end{rem}

Given a deformation $f': X' \to S'$ of $f: X \to S$, an automorphism $\varphi \in \A ut_{X'/X}$ induces---on the one hand side---a map $T_\varphi: G^\bullet_{X'/S'} \to G^\bullet_{X'/S'}$; on the other hand side, we have an element $\theta = \mathrm{log}_{X'/X}(\varphi) \in \D er_{X'/S'}(\I_{X'/X})$. Now $\D er_{X'/S'}(\I_{X'/X}) = \I_{X'/X}\cdot G^{-1}_{X'/S'}$ because $h: \Omega^1_{X'/S'} \to \cO_{X'}$ takes values in 
 $\I_{X'/X}$ if and only if $c^*(h) = 0$. Thus 
 $$\theta = (D,\Delta) := \mathrm{log}_{X'/X}(\varphi) \in \I_{X'/X}\cdot G^{-1}_{X'/S'}$$
 induces a gauge transform $\mathrm{exp}_{-\theta}$; it coincides with $T_\varphi$ by the Lemma below. The sign in $-\theta$ is due to our convention $[-,-] = -[-,-]_{sn}$.

\begin{lemma}\label{auto-gauge}
 Let $f': X' \to S'$ be a deformation of $f: X \to S$, and let $\varphi \in \A ut_{X'/X}$. Then the induced map 
 $T_\varphi: G^\bullet_{X'/S'} \to G^\bullet_{X'/S'}$ is equal to the gauge transform 
 $\mathrm{exp}_{-\theta}$ for 
 $\theta = \mathrm{log}_{X'/X}(\varphi)$. Moreover, every gauge transform $\mathrm{exp}_\theta$ with $\theta \in \I_{X'/X} \cdot G^{-1}_{X'/S'}$ is induced by a unique automorphism $\varphi \in \A ut_{X'/X}$.
\end{lemma}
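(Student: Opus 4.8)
The plan is to establish the first assertion and then to deduce the second one formally. The crucial point is that both $T_\varphi$ and $\mathrm{exp}_{-\theta}$ are automorphisms of the Gerstenhaber algebra $G^\bullet_{X'/S'}$ over $G^\bullet_{X/S}$: the map $T_\varphi$ by construction, and $\mathrm{exp}_{-\theta}$ because $[\theta,-]$ is a derivation of both operations. Indeed, for $|\theta| = -1$ the Gerstenhaber axioms specialize so that $[\theta,-]$ is an ordinary degree-zero derivation of $\wedge$ and a derivation of $[-,-]$ (the Jacobi identity carries the sign $(-1)^{(|\theta|+1)(|y|+1)} = 1$). Since $G^\bullet_{X'/S'} = \bigwedge^{-\bullet}\Theta^1_{X'/S'}$ is generated as a sheaf of $\wedge$-algebras by $G^0 = \cO_{X'}$ and $G^{-1} = \Theta^1_{X'/S'}$, it then suffices to check that the two automorphisms agree in degrees $0$ and $-1$.

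On $G^0$ I would compute both sides explicitly. Writing $\theta = (D,\Delta) = \mathrm{log}_{X'/X}(\varphi)$, the map $T_\varphi$ acts on $\cO_{X'}$ as the comorphism $\phi = \mathrm{exp}(D)$ of $\varphi$. For the gauge transform, the sign convention $[-,-] = -[-,-]_{sn}$ gives $[-\theta,g] = [\theta,g]_{sn} = \langle dg,\theta\rangle = D(g)$ for $g \in \cO_{X'}$, so that $[-\theta,-]^k(g) = D^k(g)$ and hence $\mathrm{exp}_{-\theta}(g) = \sum_{k \geq 0} D^k(g)/k! = \phi(g)$. Thus $T_\varphi$ and $\mathrm{exp}_{-\theta}$ coincide on $G^0$; note that this computation genuinely pins down the sign $-\theta$, since $+\theta$ would instead produce $\phi^{-1}$.

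For the degree $-1$ part I would avoid a direct computation involving the logarithmic component $\Delta$ and instead exploit the $(-1)$-injectivity of $G^\bullet_{X'/S'}$, which holds because $f'$ is log smooth and saturated. For $\xi \in G^{-1}$ set $\delta := T_\varphi(\xi) - \mathrm{exp}_{-\theta}(\xi) \in G^{-1}$. For every $g \in \cO_{X'}$, using that both maps are bracket homomorphisms agreeing with $\phi$ in degree $0$, one finds $[T_\varphi(\xi),\phi(g)] = T_\varphi([\xi,g]) = \phi([\xi,g])$ and likewise $[\mathrm{exp}_{-\theta}(\xi),\phi(g)] = \phi([\xi,g])$, whence $[\delta,\phi(g)] = 0$. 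As $\phi$ is bijective on $\cO_{X'}$, this means $[\delta,-] = 0$ on $G^0$, and $(-1)$-injectivity forces $\delta = 0$. This finishes the first assertion, and it is also the expected main obstacle: verifying the degree $-1$ identity directly would require tracking how $\varphi$ transports the log structure through $\Delta$, whereas the injectivity argument reduces everything to the elementary degree $0$ computation.

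The second assertion then follows formally. Given $\theta \in \I_{X'/X}\cdot G^{-1}_{X'/S'} = \D er_{X'/S'}(\I_{X'/X})$, I would set $\varphi := \mathrm{exp}_{X'/X}(-\theta) \in \A ut_{X'/X}$ via the isomorphism of the preceding Proposition. Then $\mathrm{log}_{X'/X}(\varphi) = -\theta$, so the first assertion gives $T_\varphi = \mathrm{exp}_{-(-\theta)} = \mathrm{exp}_\theta$, which proves existence. For uniqueness, if $\varphi_1,\varphi_2 \in \A ut_{X'/X}$ both induce $\mathrm{exp}_\theta$, then $\mathrm{exp}_{-\mathrm{log}_{X'/X}(\varphi_1)} = \mathrm{exp}_{-\mathrm{log}_{X'/X}(\varphi_2)}$; the injectivity of $\zeta \mapsto \mathrm{exp}_\zeta$ (already established from $(-1)$-injectivity) yields $\mathrm{log}_{X'/X}(\varphi_1) = \mathrm{log}_{X'/X}(\varphi_2)$, and since $\mathrm{log}_{X'/X}$ is a bijection we conclude $\varphi_1 = \varphi_2$.
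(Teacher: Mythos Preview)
Your proof is correct, and the degree~$0$ computation as well as the second assertion match the paper's argument essentially verbatim. The genuine difference is in degree~$-1$: the paper carries out a direct computation. It introduces the operator $\nabla(\omega) = d\langle\omega,\theta\rangle$ on $\Omega^1_{X'/S'}$, writes the action of $\varphi$ on differential forms as $d\varphi(\omega) = \sum_{n\ge 0}\nabla^n(\omega)/n!$, and then verifies $\langle\omega,T_\varphi(\xi)\rangle = \langle\omega,\mathrm{exp}_{-\theta}(\xi)\rangle$ via the identity
\[
\langle\omega,([-\theta,-])^m(\xi)\rangle = \sum_{\ell=0}^m \binom{m}{\ell}(-1)^\ell D^{m-\ell}\langle\nabla^\ell(\omega),\xi\rangle,
\]
proved by induction. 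Your route is shorter: you use only that both $T_\varphi$ and $\mathrm{exp}_{-\theta}$ are Lie algebra homomorphisms already agreeing on $G^0$, and then let $(-1)$-injectivity dispose of the difference $\delta$. This is more conceptual and avoids tracking the log part $\Delta$ entirely; the price is that it relies on the assertion (made just before the Lemma in the paper) that $T_\varphi$ respects the bracket, whereas the paper's explicit formula is a direct verification independent of that claim.
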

\begin{proof}
 For $a \in G^0_{X'/S'}$ we have 
 $$T_\varphi(a) = \sum_{k = 0}^\infty \frac{D^k(a)}{k!} = \sum_{k = 0}^\infty \frac{([-\theta,-])^k(a)}{k!} = \mathrm{exp}_{-\theta}(a)$$
 so it remains to prove equality on $G^{-1}_{X'/S'}$. The derivation $\theta = (D,\Delta)$ induces a map $\nabla: \Omega^1_{X'/S'} \to \cO_{X'} \to \Omega^1_{X'/S'}$ by the formula $\nabla(\omega) = d\langle\omega,\theta\rangle$ that we use in the formula
 $$d\varphi: \quad \Omega^1_{X'/S'} \to \Omega^1_{X'/S'}, \quad \omega \mapsto \sum_{n = 0}^\infty \frac{\nabla^n(\omega)}{n!}$$
 for the action of $\varphi$ on differential forms. Writing $\varphi = (\phi,\Phi)$ as in Section~\ref{inf-auto}, we find for $\xi \in G^{-1}_{X'/S'}$
 $$\langle \omega,T_\varphi(\xi)\rangle = \phi\left(\langle(d\varphi)^{-1}(\omega),\xi\rangle\right) = \langle\omega,\mathrm{exp}_{-\theta}\rangle$$
 by the formula 
 $$\langle \omega,([-\theta,-])^m(\xi)\rangle = \sum_{\ell = 0}^m {{m}\choose{\ell}}(-1)^\ell D^{m - \ell}\langle\nabla^\ell(\omega),\xi\rangle,$$
 which is proven easily by induction. Since $G^\bullet_{X'/S'}$ is (locally) generated by $G^{-1}_{X'/S'}$ as a ring with respect to $\wedge$, we see $T_\varphi = \mathrm{exp}_{-\theta}$. For the second statement, we use that 
 $\mathrm{log}_{X'/X}$ is a bijection and that $G^\bullet_{X'/S'}$ is $(-1)$-injective.
 \end{proof}

\section{Deformations of Gerstenhaber Algebras}\label{def-gerstenhaber-algebra-sec}

Given $f_0: X_0 \to S_0$, we study deformations of the Gerstenhaber algebra $G^\bullet_{X_0/S_0}$, i.e., the deformation functor $GD_{X_0/S_0}$, which we introduce below. We fix an affine cover $\V = \{V_\alpha\}$ of $X_0$ and we denote the---up to non-unique isomorphism---unique log smooth deformation of $V_\alpha$ over $S$ by $V_\alpha \to S$ (by abuse of notation with the same symbol for all $S$). On overlaps, we choose isomorphisms
$$\Phi_{\alpha\beta}: V_\beta|_{V_\alpha \cap V_\beta} \to V_\alpha|_{V_\alpha \cap V_\beta};$$
they induce isomorphisms $\phi_{\alpha\beta}: G^\bullet_{V_\alpha/S}|_{V_\alpha \cap V_\beta} \to G^\bullet_{V_\beta/S}|_{V_\alpha \cap V_\beta}$ of Gerstenhaber algebras \emph{contravariantly} (by our construction in Section~\ref{Gerstenhaber-algebra-sec}). Whenever we can compose these isomorphisms to an automorphism, it is a gauge transform by Lemma \ref{auto-gauge}.

\begin{defn}\label{gerstenhaber-defo-defn}
 A \emph{Gerstenhaber deformation} of $f_0: X_0 \to S_0$ is a 
 Gerstenhaber algebra $\G^\bullet$ on $X_0$ together with a morphism $\G^\bullet \to G^\bullet_{X_0/S_0}$ and isomorphisms $\chi_\alpha: \G^\bullet|_{V_\alpha} \cong G^\bullet_{V_\alpha/S}$ compatible with the map to $G^\bullet_{X_0/S_0}$ and such that on $V_\alpha \cap V_\beta$,
 the cocycle $\phi_{\alpha\beta} \circ \chi_\alpha \circ \chi_\beta^{-1}$ is a gauge transform. An \emph{isomorphism} of Gerstenhaber deformations is an isomorphism $\psi: \G^\bullet_1 \to \G^\bullet_2$ compatible with the maps to 
 $G^\bullet_{X_0/S_0}$ and such that the cocycle $\chi_{2\alpha} \circ \psi \circ \chi_{1\alpha}^{-1}$ is 
 a gauge transform. Isomorphism classes define a functor 
 $$GD_{X_0/S_0}: \mathbf{Art}_Q \to \mathbf{Set}$$
 of Artin rings.
\end{defn}

A log smooth deformation $f: X \to S$ induces a Gerstenhaber deformation $\G^\bullet = G^\bullet_{X/S}$. For $\chi_\alpha$ we take the maps (contravariantly) induced by any geometric isomorphism $X|_{V_\alpha} \cong V_\alpha$. The different choices give rise to isomorphic Gerstenhaber deformations with the identity on $\G^\bullet$ as isomorphism. This induces a 
natural transformation $LD_{X_0/S_0} \Rightarrow GD_{X_0/S_0}$.

\begin{prop}
 The transformation $LD_{X_0/S_0} \Rightarrow GD_{X_0/S_0}$ is an isomorphism of functors.
\end{prop}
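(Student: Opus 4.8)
The plan is to verify that the given natural transformation is bijective on each $S \in \mathbf{Art}_Q$; since the transformation is already natural, bijectivity on objects suffices. The engine throughout is Lemma~\ref{auto-gauge}, which I would first repackage into the statement that $\varphi \mapsto T_\varphi$ is a faithful, contravariant functor from geometric isomorphisms of saturated log smooth deformations over the central fiber to isomorphisms of Gerstenhaber algebras, whose values on a fixed deformation are exactly the gauge transforms $\mathrm{exp}_\theta$ with $\theta \in \I \cdot G^{-1}_{X'/S'}$. Faithfulness is immediate from the lemma: if two geometric isomorphisms induce the same $T$, then after inverting one their composite is an automorphism inducing $\mathrm{Id}$, hence a gauge transform $\mathrm{exp}_\theta = \mathrm{Id}$, so $\theta = 0$ by $(-1)$-injectivity and the isomorphism is the identity because $\mathrm{log}_{X'/X}$ is a bijection. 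Since any local deformation $X|_{V_\alpha}$ is isomorphic to the fixed model $V_\alpha/S$ by \cite[Prop.~8.4]{Kato1996}, every isomorphism between two local models is an automorphism precomposed with a fixed identification, so the correspondence applies to arbitrary local isomorphisms, not merely automorphisms.

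For surjectivity, I would start from a Gerstenhaber deformation $(\G^\bullet, \chi_\alpha)$ and form the transition automorphisms $g_{\alpha\beta} = \chi_\alpha \circ \chi_\beta^{-1}$, which satisfy the strict cocycle identity $g_{\alpha\beta}\,g_{\beta\gamma} = g_{\alpha\gamma}$ precisely because $\G^\bullet$ is an honest sheaf on $X_0$. By Definition~\ref{gerstenhaber-defo-defn} the composite $\phi_{\alpha\beta} \circ g_{\alpha\beta}$ is a gauge transform, so Lemma~\ref{auto-gauge} attaches to it a unique geometric automorphism, and combining this with the fixed $\Phi_{\alpha\beta}$ produces a geometric isomorphism $G_{\alpha\beta} \colon V_\beta|_{V_\alpha \cap V_\beta} \to V_\alpha|_{V_\alpha \cap V_\beta}$ over the central fiber that induces $g_{\alpha\beta}$. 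Applying the faithful contravariant functor $\varphi \mapsto T_\varphi$ to the cocycle identity for the $g_{\alpha\beta}$ forces the corresponding geometric cocycle identity for the $G_{\alpha\beta}$ on triple overlaps; gluing the local models $V_\alpha/S$ along the $G_{\alpha\beta}$ then yields a log scheme $f \colon X \to S$, which is log smooth, flat, and Cartesian over $S_0 \to S$ since all three properties are local. By construction $G^\bullet_{X/S}$ is glued from the $G^\bullet_{V_\alpha/S}$ along the same $g_{\alpha\beta}$ that build $\G^\bullet$, so $X$ represents the prescribed Gerstenhaber deformation, compatibly with the maps to $G^\bullet_{X_0/S_0}$.

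For injectivity, given $X_1, X_2 \in LD_{X_0/S_0}(S)$ whose associated Gerstenhaber deformations are identified by an isomorphism $\psi \colon G^\bullet_{X_1/S} \to G^\bullet_{X_2/S}$, I would localize: on each $V_\alpha$ the conjugate $\chi_{2\alpha} \circ \psi \circ \chi_{1\alpha}^{-1}$ is a gauge transform, hence by Lemma~\ref{auto-gauge} arises from a unique geometric isomorphism $X_1|_{V_\alpha} \to X_2|_{V_\alpha}$ over the central fiber. These local geometric isomorphisms all induce the globally defined map $\psi$ on overlaps, so faithfulness forces them to agree there, and they glue to a global geometric isomorphism $X_1 \cong X_2$ of log smooth deformations. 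I expect the main obstacle to be organizational rather than conceptual: keeping the contravariance of $\varphi \mapsto T_\varphi$ and the order of composition straight, so that the cocycle and overlap-compatibility conditions transport correctly between the Gerstenhaber side and the geometric side. The one genuinely essential input beyond this bookkeeping is the faithfulness established above, which itself rests on $(-1)$-injectivity of $G^\bullet_{X/S}$ and on the rigidity of the log structure, namely the isomorphism of ghost sheaves $\overline\M_{X'} \cong \overline\M_{X_0}$ already exploited in Section~\ref{inf-auto}.
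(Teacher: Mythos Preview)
Your proof is correct and follows essentially the same route as the paper: both directions rely on Lemma~\ref{auto-gauge} to pass between gauge transforms and geometric automorphisms of local models, use the resulting uniqueness to transport the cocycle condition (for surjectivity) and to ensure local isomorphisms glue (for injectivity), and then glue the $V_\alpha/S$. Your explicit packaging of Lemma~\ref{auto-gauge} as faithfulness of $\varphi \mapsto T_\varphi$ is a clean way to phrase what the paper does implicitly; the only slip is that, with the contravariant convention, $G_{\alpha\beta}\colon V_\beta \to V_\alpha$ induces $\chi_\beta\circ\chi_\alpha^{-1}$ rather than $g_{\alpha\beta}=\chi_\alpha\circ\chi_\beta^{-1}$, but this is precisely the bookkeeping you flag and it does not affect the argument.
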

\begin{proof}
 We prove $LD_{X_0/S_0}(S) \cong GD_{X_0/S_0}(S)$ for $S \in \mathbf{Art}_Q$. Let $f: X \to S$ and $g: Y \to S$ be two log smooth deformations, choose isomorphisms $X|_{V_\alpha} \cong V_\alpha$ and $Y|_{V_\alpha} \cong V_\alpha$, and suppose there is an isomorphism $\psi: G^\bullet_{X/S} \to G^\bullet_{Y/S}$ of the corresponding Gerstenhaber deformations. It induces gauge transforms $\psi_\alpha: G^\bullet_{V_\alpha/S} \to G^\bullet_{V_\alpha/S}$ according to Definition~\ref{gerstenhaber-defo-defn}, which are induced by automorphisms $\Psi_\alpha: V_\alpha \to V_\alpha$ by Lemma~\ref{auto-gauge}. They give isomorphisms $Y|_{V_\alpha} \to X|_{V_\alpha}$ that glue to a global isomorphism $\Psi: Y \to X$, so the transformation is injective.
 
 Conversely, let $\G^\bullet$ be a Gerstenhaber deformation over $S$, and let $\gamma_{\alpha\beta} = \phi_{\alpha\beta} \circ \chi_\alpha \circ \chi_\beta^{-1}$ be the gauge transform of Definition~\ref{gerstenhaber-defo-defn}. It is induced by an automorphism $\Gamma_{\alpha\beta}: V_\beta|_{V_\alpha \cap V_\beta} \to V_\beta|_{V_\alpha \cap V_\beta}$, which we use to define isomorphisms 
 $$\Psi_{\alpha\beta} := \Phi_{\alpha\beta} \circ \Gamma_{\alpha\beta}^{-1}: V_\beta|_{V_\alpha \cap V_\beta} \to V_\alpha|_{V_\alpha \cap V_\beta} \ .$$ 
 Note that the (contravariantly) induced map on Gerstenhaber algebras is $\chi_\beta \circ \chi_\alpha^{-1}$, so they satisfy the cocycle condition by Lemma~\ref{auto-gauge}. Gluing yields a log smooth deformation whose image in $GD_{X_0/S_0}(S)$ is $\G^\bullet$.
\end{proof}

\section{Thom--Whitney Resolutions}

In this section, we briefly review the Thom--Whitney resolution. These resolutions are acyclic resolutions of complexes that are adapted to preserve additional algebraic structures such as $\wedge$ and $[-,-]$ of a Gerstenhaber algebra; therefore, they have been employed in \cite{AlgebraicBTT2010} and \cite{ChanLeungMa2019} to study deformations. In their present form they first occur in \cite{AznarHodgeDeligne1987}. Their construction starts from a \emph{semicosimplicial} complex $V^\Delta$.
Recall that, denoting $\Delta_{\text{mon}}$ the category of sets $[n]=\{0,1,...,n\}$ for $n \geq 0$ 
with morphisms the order-preserving injective maps, a \emph{semicosimplicial} object in a category $\C$ is 
a covariant functor $\C^\Delta: \Delta_\text{mon} \to \C$.
Thus, a semicosimplicial object is a 
diagram
$$A_0 \ \substack{\longrightarrow\\[-0.9em] \longrightarrow} \ A_1 \
\substack{\longrightarrow\\[-0.9em] \longrightarrow \\[-0.9em] \longrightarrow} \ 
A_2 \
\substack{\longrightarrow\\[-0.9em] \longrightarrow \\[-0.9em] \longrightarrow
\\[-0.9em] \longrightarrow} \ \cdots  $$
where for each $n \geq 1$, we have $n + 1$ morphisms $\partial_{k,n}: A_{n - 1} \to A_n$
satisfying $\partial_{\ell,n+1}\partial_{k,n} = \partial_{k+1,n+1}\partial_{\ell,n}$.

\begin{ex}\label{ex cech1}
Let $X$ be a $k$-scheme, let
$\U = \{U_i\}$ be an affine cover of $X$, and let $\F$ be a sheaf of $k$-vector spaces. The \v{C}ech semicosimplicial sheaf $\F(\U)$ is the semicosimplicial sheaf
$$\prod_i\F(U_i) \ \substack{\longrightarrow\\[-0.9em] \longrightarrow} \ \prod_{i<j}\F(U_{ij}) \ 
\substack{\longrightarrow\\[-0.9em] \longrightarrow \\[-0.9em] \longrightarrow} \ \prod_{i<j<k}\F(U_{ijk}) \ 
\substack{\longrightarrow\\[-0.9em] \longrightarrow \\[-0.9em] \longrightarrow \\[-0.9em] \longrightarrow} \
\cdots $$
with the usual \v{C}ech maps, and with $\F(U) := j_*\F|_U$. Similarly, if $\G^\bullet$ is 
a sheaf of Gerstenhaber algebras on $X$, then $\G^\bullet(\U)$ is a semicosimplicial sheaf of Gerstenhaber algebras, i.e., each term $\G^\bullet(\U)_n = \prod_{i_0 < ... < i_n}\G^\bullet(U_{i_0...i_n})$ is a Gerstenhaber algebra, and the \v{C}ech maps are morphisms thereof.
\end{ex}

For each $n$, the differential forms on $\{t_0 + ... + t_n = 1\} \subset \bAA^{n + 1}$ form 
a differential graded commutative algebra 
$$(A_{PL})_n = k[t_0, ..., t_n, dt_0, ..., dt_n]/(1 - \sum t_i, \sum dt_i);$$
here, $\mathrm{deg}(t_i) = 0$ and $\mathrm{deg}(dt_i) = 1$. In particular, $k[t_0, ..., t_n, dt_0, ..., dt_n]$ is the \emph{graded} commutative polynomial ring with $dt_i \cdot dt_j = - dt_j \cdot dt_i$.
The inclusions $\bAA^{n} \to \bAA^{n + 1}$ of coordinate hyperplanes induce face maps
$\delta^{k,n}: (A_{PL})_n \to (A_{PL})_{n - 1}$, 
which turn $A_{PL}$ into a \emph{semisimplicial} dgca.
Given a semicosimplicial complex $V^\Delta$ of $k$-vector spaces---i.e., $V^\Delta([n]) =: (V_n^\bullet,d)$ is a complex---we use the maps $\delta^{k,n}$ together with the coface maps $\partial_{k,n}$ of $V^\Delta$
to define homomorphisms
\begin{align}
  \delta^{k,n} \otimes \text{Id}&: \quad (A_{PL})_n^i \otimes_k V_n^j \ \quad \to 
(A_{PL})_{n - 1}^i \otimes_k V_n^j \nonumber \\
\text{Id} \otimes \partial_{k,n}&: \quad (A_{PL})_{n-1}^i \otimes_k V_{n - 1}^j \to 
(A_{PL})_{n - 1}^i \otimes_k V_n^j \nonumber
\end{align}
of vector spaces. Following \cite{CoDGLADef2012}, the Thom--Whitney bicomplex 
has graded pieces 
$$C^{i,j}_\text{TW}(V^\Delta) = \{(x_n)_{n \in \NN} \in \prod_{n \in \NN} 
(A_{PL})_n^i \otimes V_n^j \ | \ (\delta^{k,n} \otimes \text{Id})(x_n) = (\text{Id} \otimes \partial_{k,n})(x_{n-1})\}$$
with differentials given by 
\begin{align}
 \delta_1: C_\text{TW}^{i,j}(V^\Delta) \to C^{i + 1,j}_\text{TW}(V^\Delta)&, \quad 
 (a_n \otimes v_n) \mapsto \qquad (da_n \otimes v_n), \nonumber \\
 \delta_2: C_\text{TW}^{i,j}(V^\Delta) \to C^{i,j + 1}_\text{TW}(V^\Delta)&, \quad
 (a_n \otimes v_n) \mapsto (-1)^i(a_n \otimes dv_n). \nonumber 
\end{align}
The Thom--Whitney complex 
$\text{Tot}_\text{TW}(V^\Delta) = \text{Tot}(C^{\bullet,\bullet}_\text{TW}(V^\Delta))$
is its total complex. The construction is functorial for homomorphisms 
of semicosimplicial dg vector spaces, and it is exact by
\cite[Lemma 2.4]{AznarHodgeDeligne1987}, where $U^\Delta \to V^\Delta \to W^\Delta$ is exact if it is exact on each $V_n^i$.

For later use, we prove a base change result on the Thom--Whitney construction. We say $V^\Delta$ is \emph{bounded} if $V_n^\bullet = 0$ for $n >> 0$.

\begin{lemma}\label{CTW-base-change}
 Let $R \to S$ be a finite type homomorphism of $k$-algebras, and let $V^\Delta$ be a bounded semicosimplicial complex of $R$-modules. Then the canonical map $C_\text{TW}^{i,j}(V^\Delta) \otimes_R S \to C_\text{TW}^{i,j}(V^\Delta \otimes_R S)$ is bijective.
\end{lemma}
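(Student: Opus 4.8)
The plan is to realize $C^{i,j}_{\text{TW}}(-)$ as a right-exact functor that commutes with arbitrary direct sums, and then to deduce the base-change isomorphism formally by resolving $S$ with free $R$-modules. I would first record these two structural properties of the functor $V^\Delta \mapsto C^{i,j}_{\text{TW}}(V^\Delta)$ on bounded semicosimplicial complexes of $R$-modules. By definition it is the kernel of the map
$$\psi_V\colon \bigoplus_n (A_{PL})_n^i \otimes_k V_n^j \longrightarrow \bigoplus_{n,k} (A_{PL})_{n-1}^i \otimes_k V_n^j$$
with components $(\delta^{k,n}\otimes \mathrm{Id}) - (\mathrm{Id}\otimes \partial_{k,n})$, where boundedness of $V^\Delta$ is exactly what turns the defining infinite products into the finite direct sums displayed. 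Writing $P(V^\Delta), Q(V^\Delta)$ for source and target, each is a finite direct sum of the functors $(A_{PL})_n^i \otimes_k (-)$, so both commute with arbitrary direct sums; since kernels commute with direct sums in a module category, so does $C^{i,j}_{\text{TW}} = \ker\psi_V$. Right-exactness of $C^{i,j}_{\text{TW}}$ I would take from the exactness of the Thom--Whitney construction cited above (\cite[Lemma 2.4]{AznarHodgeDeligne1987}): it sends a degreewise short exact sequence to a short exact sequence, hence is exact in each bidegree.

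Next I would fix a free presentation $R^{(J)} \xrightarrow{M} R^{(I)} \to S \to 0$ of $S$ as an $R$-module and tensor $V^\Delta$ with it degreewise, using $V_n^j \otimes_R R^{(I)} = (V_n^j)^{(I)}$, to obtain a degreewise right-exact sequence $(V^\Delta)^{(J)} \to (V^\Delta)^{(I)} \to V^\Delta \otimes_R S \to 0$. Applying $C^{i,j}_{\text{TW}}$, right-exactness keeps the sequence exact and commutation with direct sums pulls the indexing sets out, giving
$$C^{i,j}_{\text{TW}}(V^\Delta)^{(J)} \longrightarrow C^{i,j}_{\text{TW}}(V^\Delta)^{(I)} \longrightarrow C^{i,j}_{\text{TW}}(V^\Delta \otimes_R S) \longrightarrow 0.$$
Tensoring $C^{i,j}_{\text{TW}}(V^\Delta)$ with the same presentation yields a right-exact sequence with identical first two terms and left-hand map (both induced by $M$) ending in $C^{i,j}_{\text{TW}}(V^\Delta)\otimes_R S$. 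Comparing cokernels identifies $C^{i,j}_{\text{TW}}(V^\Delta)\otimes_R S$ with $C^{i,j}_{\text{TW}}(V^\Delta\otimes_R S)$, and chasing a pure tensor $x \otimes s$ shows the identification is the canonical map. Equivalently, both $N \mapsto C^{i,j}_{\text{TW}}(V^\Delta)\otimes_R N$ and $N \mapsto C^{i,j}_{\text{TW}}(V^\Delta\otimes_R N)$ are right-exact and commute with direct sums, the canonical comparison is an isomorphism for $N=R$ and hence for every free $N$, and the five lemma applied to a presentation of $N=S$ finishes the argument.

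The only nonformal ingredient is the exactness of the Thom--Whitney functor; everything else is bookkeeping, so I expect no serious obstacle. The point to handle with care is the role of boundedness: it is used precisely to replace the product $\prod_n$ by a finite direct sum, without which neither the commutation with direct sums nor the term-by-term identification $((A_{PL})_n^i \otimes_k V_n^j)\otimes_R N \cong (A_{PL})_n^i \otimes_k (V_n^j \otimes_R N)$ would assemble compatibly across all $n$. I would finally remark that the finite-type hypothesis on $R \to S$ is not actually needed for this argument, only that $S$ is an $R$-module, which always admits a free presentation.
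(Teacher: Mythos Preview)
Your proof is correct and takes a genuinely different route from the paper's. The paper factors $R \to S$ as a flat map $R \to R[x_1,\dots,x_n]$ followed by a surjection, handles the flat case by noting that the kernel description of $C^{i,j}_{\text{TW}}$ commutes with flat base change, and handles the surjective case by a small diagram chase using exactness of the Thom--Whitney functor together with a finite surjection $R^{\oplus m} \twoheadrightarrow I$ onto the kernel. You instead observe once and for all that $N \mapsto C^{i,j}_{\text{TW}}(V^\Delta \otimes_R N)$ is right-exact and commutes with arbitrary direct sums (via Navarro Aznar's exactness and boundedness), then compare it to $N \mapsto C^{i,j}_{\text{TW}}(V^\Delta) \otimes_R N$ by the standard ``agree on $R$, hence on frees, hence on everything via a presentation'' argument. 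Your approach is cleaner, avoids the flat/surjective dichotomy, and as you note makes the finite-type hypothesis superfluous; the paper's approach is more hands-on and keeps the two structural ingredients (kernel description, exactness) visibly separate. One small point worth making explicit in your write-up: deducing that $C^{i,j}_{\text{TW}}$ preserves degreewise right-exact sequences from the short-exact-sequence statement requires factoring through the degreewise image, which is again a bounded semicosimplicial complex --- routine, but it is the one place where a reader might pause.
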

\begin{proof}
 After factoring $R \to T \to S$---where $T = R[x_1,...,x_n]$ is a polynomial ring and $T \to S$ is surjective---it suffices to prove the statement for $R \to S$ either flat or surjective. For the flat case, first note that the product in the definition of $C_\text{TW}^{i,j}$ is actually a finite direct sum because $V^\Delta$ is bounded. Now $C_\text{TW}^{i,j}(V^\Delta)$ is the kernel of 
 $$\sum_{n,k} (\delta^{k,n} \otimes \text{Id} - \text{Id} \otimes \partial_{k,n}): \quad \bigoplus_n (A_{PL})_n^i \otimes V_n^j \to \bigoplus_{n} (A_{PL})_{n-1}^i \otimes V_n^j,$$
 so its formation commutes with flat tensor products. For the surjective case, let $I \subset R$ be the kernel. Because $C_\text{TW}^{i,j}$ is exact, we find a diagram
 \[
  \xymatrix{
   C^{i,j}_\text{TW}(V^\Delta) \otimes_R I \ar[r] \ar[d]^{p_I} & C^{i,j}_\text{TW}(V^\Delta) \ar[r]\ar@{=}[d]& C^{i,j}_\text{TW}(V^\Delta) \otimes_R S \ar[r]\ar[d]^{p_S} & 0 \\
   C^{i,j}_\text{TW}(V^\Delta \otimes_R I) \ar[r] & C^{i,j}_\text{TW}(V^\Delta) \ar[r] & C^{i,j}_\text{TW}(V^\Delta \otimes_R S) \ar[r] & 0 \\
  }
 \]
 with exact rows. Then $p_S$ is surjective, and a similar argument using a surjection $R^{\oplus m} \to I$ shows $p_I$ surjective as well. In particular, we find $p_S$ bijective.
\end{proof}

\subsubsection*{Thom--Whitney Resolutions on Schemes}

We globalize the Thom--Whitney construction to schemes. This has been done in \cite{Hinich1997}, but we recall it here for convenience.

\begin{constr}
 Let $X$ be a $k$-scheme and $\F^\Delta$ be a semicosimplicial complex of sheaves of $k$-vector spaces.
 Then applying the above construction on every open, we obtain a presheaf $C^{i,j}_\text{TW}(\F^\Delta)$, which is 
 a sheaf. Indeed, for a sheaf $\E$ of $k$-vector spaces, 
 $U \mapsto (A_{PL})^i_n \otimes_k \E(U)$ is a sheaf, products of sheaves are sheaves, and taking elements that satisfy an equation preserves the sheaf condition.
\end{constr}

As above, we say $\F^\Delta$ is \emph{bounded} if $\F^\bullet_n = 0$ for $n >> 0$. In case $\F^\Delta$ is a bounded semicosimplicial complex of quasi-coherent $\cO_X$-modules, also $C^{i,j}_\text{TW}(\F^\Delta)$ is a quasi-coherent $\cO_X$-module because the infinite product in the construction is actually finite.

\begin{lemma}\label{TW-base-change-compat}
 Let $c: Y \to X$ be a morphism of finite type of $k$-schemes, and let $\F^\Delta$ be a bounded semicosimplicial complex of quasi-coherent $\cO_X$-modules. Then the canonical map $c^*C^{i,j}_\text{TW}(\F^\Delta) \to C^{i,j}_\text{TW}(c^*\F^\Delta)$ is an isomorphism.
\end{lemma}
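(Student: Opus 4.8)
The plan is to reduce the assertion to the affine statement already proved in Lemma~\ref{CTW-base-change}. Both the pullback $c^*$ and the Thom--Whitney sheaf $C^{i,j}_\text{TW}(-)$ are local constructions, so whether the canonical map is an isomorphism may be checked on an open cover of $Y$. Since $c$ is of finite type, I can cover $Y$ by affine opens $V = \Spec S$ each mapping into an affine open $U = \Spec R$ of $X$ with $R \to S$ a finite type homomorphism of $k$-algebras. It therefore suffices to produce, over each such $V$, the desired isomorphism and to observe that it is the restriction of the single globally defined canonical map, so that gluing is automatic.

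First I would identify the Thom--Whitney sheaf on the affine $U$ with the quasi-coherent module it ought to be. Writing $M^\Delta := \Gamma(U,\F^\Delta)$ for the associated bounded semicosimplicial complex of $R$-modules, the claim is that $C^{i,j}_\text{TW}(\F^\Delta)|_U \cong \widetilde{C^{i,j}_\text{TW}(M^\Delta)}$ canonically. Because $\F^\Delta$ is bounded, the infinite product in the definition is a finite direct sum, and $C^{i,j}_\text{TW}(M^\Delta)$ is the kernel of a map between finite direct sums of the modules $(A_{PL})_n^i \otimes_k M_n^j$. As $(A_{PL})_n^i$ is a free, hence flat, $k$-module, the functor $(A_{PL})_n^i \otimes_k -$ commutes with the exact localization functor $- \otimes_R R_f$; consequently the formation of this kernel commutes with localization, which is exactly the compatibility needed to conclude quasi-coherence and the identification above. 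This is essentially the flat case of Lemma~\ref{CTW-base-change}, applied to $R \to R_f$.

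Granting this identification, the two sheaves in question restrict over $V$ to $c^*C^{i,j}_\text{TW}(\F^\Delta)|_V \cong \widetilde{C^{i,j}_\text{TW}(M^\Delta) \otimes_R S}$ and $C^{i,j}_\text{TW}(c^*\F^\Delta)|_V \cong \widetilde{C^{i,j}_\text{TW}(M^\Delta \otimes_R S)}$, where I use that $c^*\F_n^j$ corresponds to $M_n^j \otimes_R S$ and that boundedness is preserved under pullback. Under these identifications the canonical map is the sheafification of the module-level canonical map $C^{i,j}_\text{TW}(M^\Delta) \otimes_R S \to C^{i,j}_\text{TW}(M^\Delta \otimes_R S)$, which is bijective by Lemma~\ref{CTW-base-change} since $R \to S$ is of finite type. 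Hence the canonical map is an isomorphism over each $V$, and these local isomorphisms glue to the global one.

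The only genuinely technical point is the quasi-coherence identification of the second paragraph, i.e.\ that forming $C^{i,j}_\text{TW}$ commutes with localization on the base; once this is in place the lemma is a formal consequence of the affine base change Lemma~\ref{CTW-base-change} together with the locality of $c^*$. Everything else---the boundedness bookkeeping and the verification that the several local maps are restrictions of one canonical map---is routine.
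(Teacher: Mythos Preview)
Your proof is correct and follows essentially the same route as the paper: reduce to affines using quasi-coherence of both sides, then invoke Lemma~\ref{CTW-base-change}. The paper's argument is more terse because it has already observed (just before the lemma) that $C^{i,j}_\text{TW}$ of a bounded quasi-coherent semicosimplicial complex is quasi-coherent, whereas you spell out this identification explicitly via the flat case of Lemma~\ref{CTW-base-change}; but the underlying strategy is identical.
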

\begin{proof}
 The inverse image $c^*\F^\Delta$ is a bounded semicosimplicial complex of quasi-coherent $\cO_Y$-modules, so everything is quasi-coherent. It thus suffices to compare on some affine cover of $Y$, where it follows from Lemma~\ref{CTW-base-change}.
\end{proof}
\begin{ex}
 Let $X$ be a separated $k$-scheme of finite type, let $\U = \{U_i\}$ be a finite affine cover, and let $\F^\bullet$ be a complex of quasi-coherent $\cO_X$-modules. Then $\F^\bullet(\U)$ is a bounded semicosimplicial complex of quasi-coherent $\cO_X$-modules. Because the inclusions $j: U_{i_0...i_n} \to X$ are affine, we have $\F^\bullet(\U)\otimes \cO_Y = (\F^\bullet \otimes \cO_Y)(\U)$ for a thickening $c: Y \to X$, and hence the natural map 
 $$c^*C^{i,j}_\text{TW}(\F^\bullet(\U)) \to C^{i,j}_\text{TW}\left((c^*\F^\bullet)(c^{-1}(\U))\right)$$
 is an isomorphism.
\end{ex}

For a complex of sheaves $\F^\bullet$, the map
$$\F^j \to C^{0,j}_\text{TW}(\F^\bullet(\U)), \qquad f \mapsto (1 \otimes (f_{U|_{i_0...i_n}}))$$
induces a quasi-isomorphism $\F^\bullet \to \text{Tot}_\text{TW}(\F^\bullet(\U))$; this is shown by postcomposing with 
the integration quasi-isomorphism to the \v{C}ech complex
$\check\C^\bullet(\U,\F^\bullet)$. 

\begin{lemma}\label{tw-acyclic}
 Let $X$ be a separated $k$-scheme of finite type, let $\U = \{U_i\}$ be a finite affine cover, and let $\F^\bullet$ be a complex of quasi-coherent $\cO_X$-modules. Then we have 
 $H^\ell(X,C^{i,j}_\mathrm{TW}(\F^\bullet(\U))) = 0$ for $\ell \geq 1$.
\end{lemma}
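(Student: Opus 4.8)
The plan is to reduce the statement to the $\Gamma(X,-)$-acyclicity of the individual building blocks of the Thom--Whitney construction, and then to interchange the (finite) Thom--Whitney limit with sheaf cohomology. Throughout, for fixed $j$ the sheaf $C^{i,j}_\text{TW}(\F^\bullet(\U))$ depends only on the semicosimplicial sheaf $n \mapsto \F^j(\U)_n$, to which I apply the construction; I abbreviate this functor by $C^i_\text{TW}(-)$.

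First I would record two reductions. Since the cover $\U$ is finite, $\F^\bullet(\U)$ is bounded, so by the kernel description in the proof of Lemma~\ref{CTW-base-change} the sheaf $C^{i,j}_\text{TW}(\F^\bullet(\U))$ is a finite limit assembled from the terms $(A_{PL})^i_n \otimes_k \F^j(\U)_n$, where $\F^j(\U)_n = \prod_{i_0 < \dots < i_n}(j_{i_0\dots i_n})_*\bigl(\F^j|_{U_{i_0\dots i_n}}\bigr)$. Because $X$ is separated and the $U_i$ are affine, each intersection $U_{i_0\dots i_n}$ is affine and the inclusion $j_{i_0\dots i_n}\colon U_{i_0\dots i_n}\to X$ is an affine morphism; hence $R^\ell(j_{i_0\dots i_n})_*$ vanishes on quasi-coherent sheaves for $\ell\geq 1$, and combined with Serre vanishing on the affine $U_{i_0\dots i_n}$ this shows each $(j_{i_0\dots i_n})_*(\F^j|_{U_{i_0\dots i_n}})$ is $\Gamma(X,-)$-acyclic. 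Tensoring with the finite-dimensional $k$-vector space $(A_{PL})^i_n$ preserves acyclicity, so all the ambient terms of the construction are $\Gamma(X,-)$-acyclic.

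Next I would upgrade this to the subsheaf $C^{i,j}_\text{TW}(\F^\bullet(\U))$ using that $C^i_\text{TW}(-)$ is an \emph{exact} functor on bounded semicosimplicial complexes (the sheafified form of \cite[Lemma 2.4]{AznarHodgeDeligne1987}) and that it commutes with $\Gamma(X,-)$, the latter because for bounded input the defining product is a finite direct sum and $C^i_\text{TW}$ is a finite limit. Choosing a functorial (Godement) flasque resolution of $n\mapsto \F^j(\U)_n$ level by level and applying $C^i_\text{TW}$, exactness yields a resolution $0\to C^{i,j}_\text{TW}(\F^\bullet(\U))\to C^i_\text{TW}(\mathcal{G}^0_\bullet)\to C^i_\text{TW}(\mathcal{G}^1_\bullet)\to\cdots$. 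A separate lemma---that $C^i_\text{TW}$ sends a bounded semicosimplicial sheaf with flasque terms to a flasque sheaf, which I would deduce from the openwise exactness of the Thom--Whitney construction applied to the flasque restriction sequences---shows each $C^i_\text{TW}(\mathcal{G}^m_\bullet)$ is flasque, so this is a $\Gamma$-acyclic resolution computing $H^\ell(X,C^{i,j}_\text{TW}(\F^\bullet(\U)))$.

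Finally I would conclude by computing that resolution. Since $\Gamma(X,-)$ commutes with $C^i_\text{TW}$, the complex $\Gamma(X,C^i_\text{TW}(\mathcal{G}^\bullet_\bullet))$ equals $C^i_\text{TW}\bigl(\Gamma(X,\mathcal{G}^\bullet_\bullet)\bigr)$. By the building-block acyclicity of the first step, for each fixed level $n$ the complex $\Gamma(X,\mathcal{G}^\bullet_n)$ resolves $\Gamma(X,\F^j(\U)_n)$, so its cohomology vanishes in positive degrees. As an exact functor commutes with the passage to cohomology, $H^\ell\bigl(C^i_\text{TW}(\Gamma(X,\mathcal{G}^\bullet_\bullet))\bigr)\cong C^i_\text{TW}\bigl(H^\ell(\Gamma(X,\mathcal{G}^\bullet_\bullet))\bigr)=0$ for $\ell\geq 1$, which is the claim. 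The main obstacle is the technical heart of the last two steps: justifying that the Thom--Whitney construction, being exact and compatible with finite limits, both preserves flasqueness and may be interchanged with $\Gamma(X,-)$ and with the formation of cohomology. Once this interchange is in place, the acyclicity of the affine building blocks propagates immediately to every bigraded piece.
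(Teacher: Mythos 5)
Your proposal is correct, but it takes a genuinely different route from the paper's own proof. The paper computes $H^\ell(X,C^{i,j}_\mathrm{TW}(\F^\bullet(\U)))$ directly via the \v{C}ech complex $\check\C^\bullet(\U,C^{i,j}_\mathrm{TW}(\F^\bullet))$: a cocycle is split tuple-by-tuple inside $(A_{PL})^i_n \otimes \check\C^\bullet(\U,\F^j(U_{i_0\dots i_n}))$ (where there is no higher cohomology), and the resulting primitive, which need not satisfy the Thom--Whitney compatibility equation \eqref{tw-eq}, is then corrected by an induction on $n$ whose key input is the surjectivity of $\sum_k \delta_{k,n+1}\colon (A_{PL})^i_{n+1} \to \bigoplus_{k}(A_{PL})^i_n$ from \cite[Lemma 8.3]{GriffithsMorgan1981}. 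You instead run a structural argument: acyclicity of the building blocks $(A_{PL})^i_n\otimes_k\F^j(\U)_n$ (affine pushforwards), commutation of $C^{i,j}_\mathrm{TW}$ with $\Gamma(X,-)$ (which is in fact immediate from the openwise definition of the sheafified construction), exactness of the vector-space-level functor $C^{i,j}_\mathrm{TW}$ (the paper's citation of \cite[Lemma 2.4]{AznarHodgeDeligne1987}), and your flasqueness lemma---which does follow as you indicate, since for $V'\subset V$ the restriction $\G_\bullet(V)\to\G_\bullet(V')$ is a levelwise surjection of semicosimplicial complexes, so exactness of $C^{i,j}_\mathrm{TW}$ gives surjectivity on TW sections. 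Both proofs ultimately rest on the same combinatorial extension lemma for $A_{PL}$-forms, but you invoke it packaged inside the exactness statement, whereas the paper re-runs it by hand in a diagram chase parallel to \cite[Lemma 3.27]{ChanLeungMa2019}; your version isolates exactly which two properties of the TW functor are used, at the cost of carrying Godement resolutions, while the paper's is explicit and self-contained. One slip to repair: $(A_{PL})^i_n$ is \emph{not} finite-dimensional over $k$ (e.g.~$(A_{PL})^0_1\cong k[t_0]$); the acyclicity of $(A_{PL})^i_n\otimes_k\F^j(\U)_n$ still holds because tensoring a quasi-coherent sheaf with any $k$-vector space is a direct sum of copies of that sheaf and cohomology on a noetherian separated scheme commutes with direct sums, so the conclusion is unaffected.
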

\begin{proof}
 We compute the cohomology via the \v Cech complex 
 $\check \C^\bullet (\U,C^{i,j}_\mathrm{TW}(\F^\bullet)) $; our proof roughly follows \cite[Lemma 3.27]{ChanLeungMa2019}. Note that for an open $V \subset X$, we have 
 \begin{align}
  C^{i,j}_\mathrm{TW}(\F^\bullet(\U))(V) = 
  &\{(f_{i_0...i_n}) \in \prod_{i_0 < ... < i_n} (A_{PL})^i_n \otimes \F^j(U_{i_0...i_n} \cap V) \ | \nonumber \\
  &\ \forall k \leq n: (\delta_{k,n} \otimes \mathrm{Id})(f_{i_0...i_n}) = f_{i_0...\hat i_k ... i_n}|_{U_{i_0...i_n} \cap V} \} 
  \label{tw-eq}
 \end{align}
 where the product runs over all (finitely many) ordered tuples $(i_0,...,i_n)$. Given an element $(f_{\alpha_0...\alpha_\ell;i_0...i_n}) \in \check \C^\ell (\U,C^{i,j}_\mathrm{TW}(\F^\bullet))$, we find 
 $$(\check d(f_{\alpha_0...\alpha_\ell;i_0...i_n}))_{\alpha_0...\alpha_{\ell + 1};i_0...i_n} = \sum_{k = 0}^{\ell + 1}(-1)^kf_{\alpha_0...\hat{\alpha_k}...\alpha_{\ell + 1};i_0...i_n}|_{U_{\alpha_0...\alpha_{\ell + 1}}\cap U_{i_0...i_n}}$$
 for the \v Cech differential, so we can work in $(A_{PL})_n^i \otimes\check\C^\bullet
 (\U,\F^j(U_{i_0...i_n}))$ (notation from Example~\ref{ex cech1}) for each tuple $(i_0,...i_n)$ separately. Because $(A_{PL})_n^i \otimes\check\C^\bullet
 (\U,\F^j(U_{i_0...i_n}))$ does not have cohomology in degree $\not= 0$, we can find elements 
 $$F_{\alpha_0...\alpha_{\ell - 1};i_0...i_n} \in (A_{PL})^i_n \otimes \F^j(U_{\alpha_0...\alpha_{\ell - 1}} \cap U_{i_0...i_n})$$
 with $\check d(F_\bullet) = (f_\bullet)$, but we might have $(F_\bullet) \notin \check \C^{\ell - 1} (\U,C^{i,j}_\mathrm{TW}(\F^\bullet(\U)))$ since $(F_\bullet)$ might not satisfy equation \eqref{tw-eq}.
 
 We correct $(F_\bullet)$ to an element $(v_\bullet) \in \check \C^{\ell - 1} (\U,C^{i,j}_\mathrm{TW}(\F^\bullet(\U)))$ by induction on $n$. For $n < i$ we have $(A_{PL})^i_n = 0$, so we set $v_{\bullet;i_0...i_n} = 0$. For $n = i$ we take $v_{\bullet;i_0...i_n} = F_{\bullet;i_0...i_n}$ which satisfies equation \eqref{tw-eq} (when we plug it in on the left). For the induction step, let $K_{n + 1}$ be the kernel of 
 $\check\C^{\ell - 1}
 (\U,\F^j(U_{i_0...i_{n+1}})) \to \check\C^\ell
 (\U,\F^j(U_{i_0...i_{n + 1}}))$, and note that 
 $\sum \delta_{k,n + 1}: (A_{PL})^i_{n + 1} \to \bigoplus_{k = 0}^{n + 1}(A_{PL})^i_n$ is surjective by \cite[Lemma 8.3]{GriffithsMorgan1981} (or \cite[Lemma 3.5]{ChanLeungMa2019}).
 We assemble these spaces into a diagram
 \[
  \xymatrix{
   (A_{PL})^i_{n + 1} \otimes K_{n + 1} \ar[r] \ar@{^{(}->}[d] & \bigoplus_{k = 0}^{n + 1}(A_{PL})^i_n \otimes K_{n + 1} \ar@{^{(}->}[d] \\
   (A_{PL})^i_{n + 1} \otimes \check\C^{\ell - 1}
 (\U,\F^j(U_{i_0...i_{n+1}})) \ar[r]^{\Delta^{\ell - 1}} \ar[d]^{\check d} & \bigoplus_{k = 0}^{n + 1}(A_{PL})^i_n \otimes \check\C^{\ell - 1}
 (\U,\F^j(U_{i_0...i_{n+1}})) \ar[d] \\
 (A_{PL})^i_{n + 1} \otimes \check\C^{\ell}
 (\U,\F^j(U_{i_0...i_{n+1}})) \ar[r] &
 \bigoplus_{k = 0}^{n + 1}(A_{PL})^i_n \otimes \check\C^{\ell}
 (\U,\F^j(U_{i_0...i_{n+1}})) \\
  }
 \]
 with exact columns and surjective rows. Assuming $(v_\bullet)$ to be constructed up to order $n$, we need to find 
 $$(v_{\bullet;i_0...i_{n + 1}}) \in (A_{PL})^i_{n + 1} \otimes \check\C^{\ell - 1}
 (\U,\F^j(U_{i_0...i_{n+1}})) $$ 
 such that 
 $\check d(v_{\bullet;i_0...i_{n + 1}}) = (f_{\bullet;i_0...i_{n + 1}})$ and---to satisfy equation \eqref{tw-eq}---we have 
 $\Delta^{\ell - 1}(v_{\bullet;i_0...i_{n + 1}}) = \hat v_{i_0...i_{n + 1}}$ where 
 $\hat v_{i_0...i_{n + 1}}$ is constructed from $(v_\bullet)$ (up to order $n$) by the right hand side of equation \eqref{tw-eq}.
 Given $(F_{\bullet;i_0...i_{n + 1}})$ this is an easy diagram chase.
\end{proof}

\begin{cor}\label{global-sec-base-change-compat}
 Let $A' \to A$ be a morphism in $\mathbf{Art}_Q$, let $f': X' \to S'$ be separated, let $\U'$ be a finite affine cover of $X'$, and let $X = X' \times_{S'} S$. Let $\F^\bullet$ be a complex of quasi-coherent $\cO_{X'}$-modules which are flat over $A'$, and let $\C := C^{i,j}_\text{TW}(\F^\bullet(\U'))$. Then the canonical map $H^0(X',\C) \otimes_{A'} A \to H^0(X,\C \otimes_{A'} A)$ is bijective.
\end{cor}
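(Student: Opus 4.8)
The plan is to compute both sides through the \v{C}ech complex of the cover $\U'$ and then run a flat dévissage. Since $f'$ is separated and $\U'$ is a finite affine cover, the quasi-coherent sheaf $\C = C^{i,j}_\text{TW}(\F^\bullet(\U'))$ has its cohomology computed by the \v{C}ech complex $\check\C^\bullet(\U',\C)$, and by Lemma~\ref{tw-acyclic} this complex is exact in positive degrees. Writing $W := H^0(X',\C)$, the augmented complex
$$0 \to W \to \check\C^0(\U',\C) \to \check\C^1(\U',\C) \to \cdots$$
is then a finite exact sequence of $A'$-modules, the terms being $\check\C^p(\U',\C) = \prod_{j_0 < \cdots < j_p}\C(U'_{j_0\ldots j_p})$.

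The central point is that each $\C(U')$ (for an affine intersection $U' = U'_{j_0\ldots j_p}$) is flat over $A'$. I would verify this by the ideal-wise flatness criterion: for a finitely generated ideal $\fb \subset A'$ it suffices to show $\fb \otimes_{A'}\C(U') \to \C(U')$ is injective. Writing $V^\Delta := \F^\bullet(\U')|_{U'}$, a bounded semicosimplicial complex of $A'$-flat modules, I tensor a free presentation $A'^{\oplus a} \to A'^{\oplus b} \to \fb \to 0$ into $V^\Delta$ and apply $C^{i,j}_\text{TW}$. As this functor is exact, additive, and commutes with finite direct sums, and as $V^\Delta$ is flat, the canonical map $\fb \otimes_{A'}\C(U') \to C^{i,j}_\text{TW}(V^\Delta \otimes_{A'}\fb)$ is identified with an isomorphism of the cokernels of one and the same $A'$-matrix acting on copies of $\C(U')$. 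Exactness of Thom--Whitney applied to $0 \to V^\Delta\otimes\fb \to V^\Delta \to V^\Delta\otimes A'/\fb \to 0$ gives an exact sequence $0 \to C^{i,j}_\text{TW}(V^\Delta\otimes\fb) \to \C(U') \to C^{i,j}_\text{TW}(V^\Delta\otimes A'/\fb) \to 0$, while Lemma~\ref{CTW-base-change} applied to the finite-type quotient $A' \to A'/\fb$ identifies $C^{i,j}_\text{TW}(V^\Delta\otimes A'/\fb) \cong \C(U')\otimes_{A'}A'/\fb$ via the natural quotient map. Comparing kernels shows $C^{i,j}_\text{TW}(V^\Delta\otimes\fb)\cong \fb\,\C(U')$, so $\fb\otimes_{A'}\C(U') \to \C(U')$ factors as an isomorphism followed by an inclusion, hence is injective. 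Thus $\C(U')$, and therefore each $\check\C^p(\U',\C)$, is flat.

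With flatness established, the dévissage is routine: breaking the exact augmented \v{C}ech complex into short exact sequences of syzygies $Z^p$ and using that each term is flat, descending induction shows every $Z^p$, and in particular $W$, is flat, and that the sequence remains exact after applying $-\otimes_{A'}A$. Consequently $\ker\bigl(\check\C^0(\U',\C)\otimes_{A'}A \to \check\C^1(\U',\C)\otimes_{A'}A\bigr) = W\otimes_{A'}A$. It remains to identify this kernel with $H^0(X,\C\otimes_{A'}A)$. Since $A$ is a finite, hence finite-type, $A'$-algebra and $U_{j_0\ldots j_n} = U'_{j_0\ldots j_n}\times_{S'}S$ is affine, Lemma~\ref{CTW-base-change} (equivalently the Example following Lemma~\ref{TW-base-change-compat}) yields $\check\C^p(\U',\C)\otimes_{A'}A \cong \check\C^p(\U,c^*\C)$ with $c^*\C = \C\otimes_{A'}A$, where $c: X \to X'$ is the thickening. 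Thus the base-changed complex is the \v{C}ech complex of $c^*\C$, whose degree-zero cohomology is $H^0(X,\C\otimes_{A'}A)$ by the sheaf condition, giving the desired bijection.

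The main obstacle is precisely the flatness of the \v{C}ech terms over $A'$: the Thom--Whitney terms are kernels of maps between flat modules and so are not obviously flat, which blocks a naive base-change argument. The argument above circumvents this by reducing, through free presentations of ideals, to the finite-type base change of Lemma~\ref{CTW-base-change} combined with the exactness of the Thom--Whitney functor.
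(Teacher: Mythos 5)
Your proof is correct, but it takes a genuinely different route from the paper's. The paper factors $A' \to A$ as a flat map $A' \to A'[x_1,\dots,x_n]/(x_1^{m_1},\dots,x_n^{m_n})$ followed by a surjection, handles the flat part by flat base change, and handles the surjective part by induction over small extensions: for a small extension with kernel $I$ one has $I\cdot\C \cong (\C\otimes_{A'}k)\otimes_k I$ by flatness of $\F^\bullet$, so Lemma~\ref{tw-acyclic} gives $H^1(X',I\cdot\C)=0$ and the claim follows from the long exact sequence in sheaf cohomology applied to $0 \to I\cdot\C \to \C \to \C\otimes_{A'}A \to 0$. You instead work entirely with the \v{C}ech complex of $\U'$ and prove the stronger intermediate statement that each Thom--Whitney module $\C(U'_{j_0\ldots j_p})$ is flat over $A'$---via exactness of $C^{i,j}_{\mathrm{TW}}$, finite presentation of ideals in the Artinian ring $A'$, and Lemma~\ref{CTW-base-change}---after which the conclusion is a purely algebraic d\'evissage of the exact augmented \v{C}ech complex plus the identification of its base change with the \v{C}ech complex of $\C\otimes_{A'}A$ on $X$. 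Both arguments lean on Lemma~\ref{tw-acyclic} and on the base-change Lemma~\ref{CTW-base-change}, just at different points: the paper uses acyclicity to kill an $H^1$, you use it to know the \v{C}ech complex is a resolution of $H^0(X',\C)$. The paper's route is shorter given its toolkit; yours treats an arbitrary morphism $A'\to A$ uniformly (no reduction to small extensions) and yields the $A'$-flatness of $\C$ as a byproduct, a fact the paper asserts elsewhere (e.g.\ in Example~\ref{local-TW-ex}) without detailed justification. One streamlining remark: in your flatness step, the detour through $A'/\fb$ and the comparison of kernels is only needed to identify the image with $\fb\,\C(U')$; injectivity of $\fb\otimes_{A'}\C(U')\to\C(U')$, which is all the flatness criterion requires, already follows because exactness of $C^{i,j}_{\mathrm{TW}}$ makes $C^{i,j}_{\mathrm{TW}}(V^\Delta\otimes\fb)\to C^{i,j}_{\mathrm{TW}}(V^\Delta)$ injective.
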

\begin{proof}
 We have a factorization $A' \to A'[x_1,...,x_n]/(x_1^{m_1},...,x_n^{m_n}) \to A$ in $\mathbf{Art}_Q$ with the second map surjective, so it suffices to prove the statement for $A' \to A$ either flat or a small extension. The flat case is by flat base change. In the small extension case, let $I \subset A'$ be the ideal. Then we have $I \cdot \C \cong (\C \otimes_{A'} k) \otimes_k I$ due to flatness, so Proposition~\ref{tw-acyclic} shows $H^1(X',I \cdot \C) = 0$; thus, the result follows from the long exact sequence in sheaf cohomology.
\end{proof}

\section{The Thom--Whitney Gerstenhaber Algebra}

We perform a Thom--Whitney resolution of the Gerstenhaber algebras $G^\bullet_{V_\alpha/S}$ of polyvector fields and glue them canonically to a global sheaf of (bigraded) Gerstenhaber algebras $\mathrm{PV}_{X_0/S}$. This sheaf depends on $X_0$ and the base $S$, but not on a deformation $f: X \to S$, thus the notation ``$X_0/S$''. The gluing is inspired by the gluing construction in \cite[3.3]{ChanLeungMa2019}.

Let $\G^\bullet$ be a Gerstenhaber $k$-algebra on a (separated) finite type $k$-scheme $X$, and let $\U$ be an open affine cover. To perform a Thom--Whitney resolution, we turn $\G^\bullet$ into a complex of coherent sheaves by endowing it with the differential $0$. We set
$\textrm{TW}^{p,q}(\G^\bullet) := C^{q,p}_\textrm{TW}(\G^\bullet(\U))$; the switch of indices is on purpose to fit the conventions of \cite{ChanLeungMa2019}. The operations
$$(a \otimes v) \wedge (b \otimes w) := (-1)^{|b||v|}(a \wedge b) \otimes (v \wedge w)$$
 and 
 $$[a \otimes v, b \otimes w] := (-1)^{(|v| + 1)|b|}(a\wedge b) \otimes [v,w]$$
 turn it into a \emph{bigraded Gerstenhaber algebra} $\TW^{\bullet,\bullet}(\G^\bullet)$ (or $\TW^{\bullet,\bullet}$ for brevity), i.e., we have 
 $$\TW^{p,q} \wedge \TW^{i,j} \subset \TW^{p + i, q + j} \quad \mathrm{and}\quad [\TW^{p,q},\TW^{i,j}] \subset \TW^{p + i + 1, q + j}$$ 
 as well as the usual relations with respect to the total degree $p + q$. The differential $d: \TW^{p,q} \to \TW^{p,q+1}, a \otimes v \mapsto da \otimes v$, satisfies the relations
 \begin{equation}\label{GD}
  d(x \wedge y) = dx \wedge y + (-1)^{|x|} x \wedge dy, \quad d[x,y] = [dx,y] + (-1)^{|x| + 1}[x,dy]
 \end{equation}
 and $d^2 = 0$ whereas the other differential $\TW^{p,q} \to \TW^{p + 1,q}$ of this Thom--Whitney bicomplex is $0$. We say that its total complex $\TW^\bullet(\G^\bullet) := \mathrm{Tot}_\mathrm{TW}(\G^\bullet(\U))$ is a \emph{(strongly) differential Gerstenhaber algebra}. The quasi-isomorphism $(\G^\bullet,0) \to (\TW^\bullet(\G^\bullet),d)$ is a functorial acyclic resolution of differential Gerstenhaber algebras. In particular, we can recover $\G^\bullet$ as the cohomology sheaves $\cH^\bullet(\mathrm{TW}^\bullet(\G^\bullet))$ which form a Gerstenhaber algebra for every differential Gerstenhaber algebra. Sometimes we write $\cH^\bullet(\TW^{\bullet,\bullet})$ for the cohomology; then taking the total complex is implicit---cohomologies are, for us, always singly graded. However, in bidegree $(p,q)$ with $q > 0$, the cohomology of $d: \TW^{p,q} \to \TW^{p, q + 1}$ is $0$ anyway.
 
 \begin{rem}
  Do not confuse our notion of differential Gerstenhaber algebra with the one of e.g.~\cite{Kosmann1995} which is more closely related to Batalin--Vilkovisky algebras.
 \end{rem}

 \begin{ex}\label{local-TW-ex}
  Let $f: X \to S$ be a log smooth and saturated deformation of $f_0: X_0 \to S_0$. Then we have a resolution $G^\bullet_{X/S} \to \TW^\bullet(G^\bullet_{X/S})$. If we deform further to $f': X' \to S'$, then applying $\TW(-)$ to the exact sequence \eqref{rel-def-sequence} we obtain the exact sequence
  $$ 0 \to \I_{X'/X} \cdot \TW^\bullet(G^\bullet_{X'/S'}) \to \TW^\bullet(G^\bullet_{X'/S'}) \to \TW^\bullet(G^\bullet_{X/S}) \to 0$$
  because $\TW^\bullet(G^\bullet_{X'/S'})$ is flat over $S'$ and compatible with base change. 
  
  If $\mathrm{exp}_\theta: G^\bullet_{X'/S'} \to G^\bullet_{X'/S'}$ is a gauge transform (relative to $f: X \to S$), then the induced automorphism $\TW(\mathrm{exp}_\theta)$ is the gauge transform defined by $(1 \otimes (\theta|_{U_{i_0...i_n}})) \in \I_{X'/X}\cdot \TW^{-1,0}(G^\bullet_{X'/S'})$ via the formula in \eqref{gauge-def-formula}.
 \end{ex}
 
 $(-1)$-injectivity is preserved by the Thom--Whitney construction. This is important for Lemma~\ref{TWG-first-order-def} below.
 
 \begin{lemma}\label{injective-G}
  Let $\G^\bullet$ be a $(-1)$-injective Gerstenhaber algebra. Then for $0 \not= \theta \in \TW^{-1,j}(\G^\bullet)$ the map 
  $[\theta,-]: \TW^{0,0}(\G^\bullet) \to \TW^{0,j}(\G^\bullet)$ is \emph{not} the zero map.
 \end{lemma}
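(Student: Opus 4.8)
The plan is to test the map $[\theta,-]$ against the ``constant'' sections coming from the canonical inclusion $\G^0 \hookrightarrow \TW^{0,0}(\G^\bullet)$, and thereby reduce the non-vanishing to the $(-1)$-injectivity of $\G^\bullet$ itself. Since being ``not the zero map'' is a statement about a morphism of sheaves, it suffices to produce, for a given nonzero section $\theta$ over some open $V$, a local section $g$ of $\TW^{0,0}(\G^\bullet)$ with $[\theta,g] \neq 0$.

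First I would unwind the bigrading. By definition $\TW^{-1,j}(\G^\bullet) = C^{j,-1}_\TW(\G^\bullet(\U))$, so using the description of sections in the proof of Lemma~\ref{tw-acyclic}, $\theta$ is a compatible family $(\theta_n)$ whose component over a tuple $I = (i_0 < \dots < i_n)$ lies in $(A_{PL})_n^j \otimes_k \G^{-1}(U_{i_0\dots i_n} \cap V)$. As $\theta \neq 0$, some component $\theta_{n;I}$ is nonzero; writing $\theta_{n;I} = \sum_s a_s \otimes v_s$ with the $a_s \in (A_{PL})_n^j$ chosen $k$-linearly independent, at least one $v_s \in \G^{-1}(U_{i_0\dots i_n} \cap V)$ is nonzero, say $v_1 \neq 0$.

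Next I would invoke $(-1)$-injectivity of $\G^\bullet$: injectivity of the sheaf map $\G^{-1} \to \cHom{\G^0}{\G^0}$ means $[v_1,-]$ is not the zero map on $\G^0$ restricted to $U_{i_0\dots i_n} \cap V$, so there is an open $W \subset U_{i_0\dots i_n} \cap V$ and $g_0 \in \G^0(W)$ with $[v_1,g_0] \neq 0$. I then set $g := \iota(g_0)$, the image of $g_0$ under the canonical inclusion $\G^0 \to \TW^{0,0}$, $f \mapsto (1 \otimes f|_{U_{i_0\dots i_n}})$. Evaluating the bracket componentwise via the defining formula $[a\otimes v, b\otimes w] = (-1)^{(|v|+1)|b|}(a\wedge b)\otimes[v,w]$, and noting that $|v_s| = -1$ together with $|1| = 0$ renders the sign trivial while $a_s \wedge 1 = a_s$, the $(n,I)$-component of $[\theta,g]$ over $W$ equals $\sum_s a_s \otimes [v_s,g_0]$.

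Finally, because $(A_{PL})_n^j$ is a $k$-vector space and the $a_s$ are $k$-linearly independent, the element $\sum_s a_s \otimes [v_s,g_0]$ vanishes only if every $[v_s,g_0]$ vanishes; since $[v_1,g_0] \neq 0$, this component is nonzero, so $[\theta,g] \neq 0$ over $W$, and hence $[\theta,-]$ is not the zero map. The only points requiring care are the sign bookkeeping in the bracket against $\iota(g_0)$ and the observation that $k$-linear independence of the $A_{PL}$-coefficients lets a single surviving $[v_s,g_0]$ witness non-vanishing; both are routine, and they are precisely what lets the whole statement collapse onto the $(-1)$-injectivity of $\G^\bullet$.
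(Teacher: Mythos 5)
Your proof is correct and follows essentially the same route as the paper's: decompose a nonzero component of $\theta$ over a $k$-basis of $(A_{PL})^j_n$, apply $(-1)$-injectivity to a nonzero $\G^{-1}$-coefficient to find a test section $g_0$, and pair against the constant section $1\otimes g_0$ in $\TW^{0,0}$. Your version merely makes explicit the linear-independence step and the sign check that the paper leaves implicit.
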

 \begin{proof}
  We work over an arbitrary open $V \subset X$. Following the description of $C^{i,j}_\mathrm{TW}$ in Lemma~\ref{tw-acyclic}, the element $\theta \in \TW^{-1,j}(\G^\bullet)$ is given by a family $(\theta_{i_0...i_n})$. After denoting $\{t_\mu\}$ some basis of $(A_{PL})^j_n$, we can decompose 
  $\theta_{i_0...i_n} = \sum_\mu a_\mu t_\mu \otimes \theta_{i_0...i_n}^\mu$ with 
  $\theta^\mu_{i_0...i_n} \in \G^{-1}(U_{i_0...i_n} \cap V)$. For \emph{some non-zero} $\theta^\mu_{i_0...i_n}$ we find
  $f \in \cO_X(U_{i_0...i_n} \cap V)$ such that $[\theta^\mu_{i_0...i_n},f] \not= 0$, hence $[\theta,(1 \otimes f)] \not= 0$ for the induced element $(1 \otimes f) \in \TW^{0,0}(\G^\bullet)(U_{i_0...i_n} \cap V)$.
 \end{proof}

 We apply this construction to the Gerstenhaber algebras $G^\bullet_{V_\alpha/S}$ and obtain
differential Gerstenhaber algebras $\TW^\bullet(G^\bullet_{V_\alpha/S})$ as well as isomorphisms $\TW(\phi_{\alpha\beta})$ on 
overlaps. Every cocycle 
$$\TW(\phi_{\gamma\alpha}) \circ \TW(\phi_{\beta\gamma}) \circ\TW(\phi_{\alpha\beta})$$
is a gauge transform by an element in $\I \cdot \TW^{-1,0}(G^\bullet_{V_\alpha/S}|_{V_\alpha \cap V_\beta \cap V_\gamma})$, where $\I$ is the kernel of $\cO_X \to \cO_{X_0}$. When gluing them as differential Gerstenhaber algebras, this induces a Gerstenhaber deformation (Definition~\ref{gerstenhaber-defo-defn}) by taking cohomology. As a step towards that, our goal is to glue them as a \emph{bigraded Gerstenhaber algebra}, i.e., \emph{without} differential (and keeping the bigrading).

\begin{defn}\label{TWG-def}
 A \emph{Thom--Whitney--Gerstenhaber (TWG) deformation} is a bigraded Gerstenhaber algebra $\T^{\bullet,\bullet}$ with a morphism $\T^{\bullet,\bullet} \to \TW^{\bullet,\bullet}(G^\bullet_{X_0/S_0})$ (of bigraded Gerstenhaber algebras) and isomorphisms 
 $\chi_\alpha: \T^{\bullet,\bullet}|_{V_\alpha} \cong \TW^{\bullet,\bullet}(G^\bullet_{V_\alpha/S})$ compatible with the maps to 
 $\TW^{\bullet,\bullet}(G^\bullet_{X_0/S_0})$ such that the cocycle 
 $\phi_{\alpha\beta} \circ \chi_\alpha \circ \chi_\beta^{-1}$ is a gauge transform defined by an element in 
 $\I \cdot \TW^{-1,0}(G^\bullet_{V_\beta/S}|_{V_\alpha \cap V_\beta})$. Isomorphisms are analogous to Gerstenhaber deformations.
\end{defn}

 In general, gauge transforms are not compatible with the differentials, so there is no canonical differential on $\T^{\bullet,\bullet}$ coming out of the data. The canonical example of a TWG deformation is the following.

\begin{ex}
 Let $\G^\bullet$ be a Gerstenhaber deformation. Then 
 $\TW^{\bullet,\bullet}(\G^\bullet)$ is a TWG deformation upon forgetting the differential.
\end{ex}

Given a morphism $S \to S'$ in $\mathbf{Art}_Q$ and a TWG deformation $\T^{\bullet,\bullet}$ on $S'$, Lemma~\ref{TW-base-change-compat} shows that $\T^{\bullet,\bullet}\otimes_{A'} A$ is a TWG deformation as well (where $A = \cO(S)$).
Moreover, TWG deformations have a deformation theory just like classical flat deformations:

\begin{lemma}\label{TWG-first-order-def}
 Let $0 \to I \to A' \to A \to 0$ be a small extension in $\mathbf{Art}_Q$ and let $\T^{\bullet,\bullet}$ be a TWG deformation on $S = \Spec A$. Then:
 \begin{itemize}
  \item Given a lifting $\T'^{\bullet,\bullet}$ on $S'$, the relative automorphisms are in 
  $$H^0(X_0,\TW^{-1,0}(G^\bullet_{X_0/S_0})) \otimes_k I.$$
  \item Given a lifting $\T'^{\bullet,\bullet}$ on $S'$, the isomorphism classes of liftings are in 
  $$H^1(X_0,\TW^{-1,0}(G^\bullet_{X_0/S_0})) \otimes_k I.$$
  \item The obstructions to the existence of a lifting are in 
  $$H^2(X_0,\TW^{-1,0}(G^\bullet_{X_0/S_0})) \otimes_k I.$$
 \end{itemize}
\end{lemma}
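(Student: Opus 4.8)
The plan is to reduce all three assertions to the standard \v Cech-theoretic deformation theory governed by a single abelian sheaf, namely $\N := \TW^{-1,0}(G^\bullet_{X_0/S_0}) \otimes_k I$, which plays the role of the sheaf of infinitesimal automorphisms.

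First I would identify this sheaf of infinitesimal gauge transforms. Let $\I = I \cdot \cO_{X'}$ be the ideal of the thickening underlying a lift. Because the extension is small, i.e.\ $I \cdot \m = 0$ in $A'$, we have $\I^2 = 0$ on the flat sheaves in question, so the series \eqref{gauge-def-formula} truncates after the linear term, giving $\mathrm{exp}_\theta \circ \mathrm{exp}_\xi = \mathrm{exp}_{\theta + \xi}$ for $\theta, \xi \in \I \cdot \TW^{-1,0}$; likewise conjugation by any gauge transform $\mathrm{exp}_\eta$ that is trivial modulo $S$ acts as $\mathrm{exp}([\eta,-])$, hence trivially on $\I \cdot \TW^{-1,0}$ since $[\eta, \I \cdot \TW^{-1,0}] \subset \I^2 \cdot (\cdots) = 0$. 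Thus the sheaf of gauge transforms of a lift that restrict to the identity over $S$ is \emph{abelian}, and by flatness together with the base-change isomorphism of Lemma~\ref{TW-base-change-compat} it is canonically $\I \cdot \TW^{-1,0}(G^\bullet_{X'/S'}) \cong \TW^{-1,0}(G^\bullet_{X_0/S_0}) \otimes_k I = \N$. The fixed transition maps $\phi_{\alpha\beta}$ reduce on the central fibre to the identifications that glue the local sheaves $\TW^{-1,0}(G^\bullet_{V_\alpha/S})$ into the global $\TW^{-1,0}(G^\bullet_{X_0/S_0})$, so these local abelian sheaves patch to $\N$; $(-1)$-injectivity (Lemma~\ref{injective-G}) guarantees that $\theta \mapsto \mathrm{exp}_\theta$ is injective, so gauge transforms are faithfully parametrised by sections of $\N$.

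With this in hand I would observe that the problem is purely one of gluing data: the local models $\TW^{\bullet,\bullet}(G^\bullet_{V_\alpha/S'})$ are lifts of $\T^{\bullet,\bullet}|_{V_\alpha}$, any two local lifts are isomorphic, and so every TWG deformation (Definition~\ref{TWG-def}) is recovered from transition gauge transforms twisted by the fixed $\phi_{\alpha\beta}$, modulo local gauge transforms. The three statements are then the usual consequences. (i) A relative automorphism of a fixed lift $\T'^{\bullet,\bullet}$ is, via the $\chi_\alpha$, a family $\mathrm{exp}_{\theta_\alpha}$ with $\theta_\alpha \in \N(V_\alpha)$; compatibility on overlaps (where conjugation by the transition maps is trivial by the previous step) says exactly that $(\theta_\alpha)$ is a global section, so the relative automorphisms form $H^0(X_0,\N)$. (ii) Given one lift $\T'$, any other is locally isomorphic to it, and two local isomorphisms differ by $\mathrm{exp}_{\theta_{\alpha\beta}}$ with $\theta_{\alpha\beta} \in \N(V_\alpha \cap V_\beta)$; the cocycle condition makes $(\theta_{\alpha\beta})$ a $1$-cocycle, a change of local isomorphisms changes it by a coboundary, and hence isomorphism classes of lifts form a torsor under $H^1(X_0,\N)$. (iii) To build a lift, take the local models over $S'$ and lift each transition gauge transform arbitrarily; the failure of the triple cocycle condition lies in $\N(V_\alpha \cap V_\beta \cap V_\gamma)$ and defines a $2$-cocycle whose class in $H^2(X_0,\N)$ vanishes iff the choices can be adjusted to satisfy the cocycle condition, i.e.\ iff a lift exists. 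Since $I$ is a finite-dimensional $k$-vector space, $H^\ell(X_0,\N) = H^\ell(X_0,\TW^{-1,0}(G^\bullet_{X_0/S_0})) \otimes_k I$ in each case.

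The hard part will not be the abstract \v Cech bookkeeping, which is formal once the infinitesimal automorphism sheaf is identified, but rather the verification carried out in the first step: that all higher Baker--Campbell--Hausdorff corrections and all adjoint (conjugation) terms genuinely vanish on $\I \cdot \TW^{-1,0}$ because $I \cdot \m = 0$, and that the fixed $\phi_{\alpha\beta}$ patch the local sheaves $\TW^{-1,0}(G^\bullet_{V_\alpha/S})$ to the single global sheaf $\TW^{-1,0}(G^\bullet_{X_0/S_0})$, so that the twisted non-abelian gluing data really linearises into honest \v Cech cochains with values in $\N$. Once this linearisation is established, the identification of automorphisms, deformations, and obstructions with $H^0$, $H^1$, and $H^2$ is the standard argument.
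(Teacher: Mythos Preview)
Your proposal is correct and follows essentially the same approach as the paper: identify the sheaf of infinitesimal automorphisms as $I\cdot \T'^{-1,0} \cong \TW^{-1,0}(G^\bullet_{X_0/S_0})\otimes_k I$ via $(-1)$-injectivity (Lemma~\ref{injective-G}) and base change, then invoke the standard \v Cech/SGA-style argument for automorphisms, torsors of lifts, and obstructions. The only cosmetic slip is writing ``$\I\cdot \TW^{-1,0}(G^\bullet_{X'/S'})$'' as if there were a global $X'$; as you immediately clarify, one works with the local models $\TW^{-1,0}(G^\bullet_{V_\alpha/S'})$ and glues, which is exactly what the paper does.
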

Since $\TW^{-1,0}(G^\bullet_{X_0/S_0})$ is acyclic by Lemma~\ref{tw-acyclic}, there is indeed a unique lifting up to isomorphism. This means for every $S$, there is up to isomorphism a unique TWG deformation 
 $\PV_{X_0/S}$ (depending on $S$ and the morphism $f_0: X_0 \to S_0$, but no further data) which we call the \emph{Thom--Whitney Gerstenhaber algebra}. In fact, $\PV_{X_0/S}$ has many automorphisms, so there is no canonical choice. Thus we fix once and for all one $\PV_{X_0/S}$ for every $S \in \mathbf{Art}_Q$. The notation $\PV$ is taken from \cite{ChanLeungMa2019}; presumably, $\PV$ stands for ``polyvector fields'' since it is obtained by resolving the Gerstenhaber algebra of polyvector fields.
\begin{proof}[Proof of the Lemma]
  Given a lifting $\T'^{\bullet,\bullet}$, we consider the choice of a morphism 
 $\T'^{\bullet,\bullet} \to \T^{\bullet,\bullet}$ that is the given one on $\TW^{\bullet,\bullet}(G^\bullet_{V_\alpha/S'}) \to \TW^{\bullet,\bullet}(G^\bullet_{V_\alpha/S})$ as part of the datum.
 Since the gauge transforms $\phi_{\alpha\beta} \circ \chi_\alpha \circ \chi_\beta^{-1}$ are $A'$-linear, the TWG deformation $\T'^{\bullet,\bullet}$ consists of sheaves of $A'$-modules. Thus the sequence 
 $$0 \to I \cdot \T'^{\bullet,\bullet} \to \T'^{\bullet,\bullet} \to \T^{\bullet,\bullet} \to 0$$
 makes sense and it is exact because it is locally the sequence of Example~\ref{local-TW-ex}. Every element $\theta \in I \cdot \T'^{-1,0}$ induces a gauge transform $\mathrm{exp}_\theta$ by the formula in Section~\ref{def-gerstenhaber-algebra-sec}. It is an automorphism of $\T'^{\bullet,\bullet}$ in the sense of Definition~\ref{TWG-def} because it is a gauge transform on $\TW^{\bullet,\bullet}(G^\bullet_{V_\alpha/S'})$ as well, so we obtain a sheaf map 
 $$\mathrm{exp}_{\T'/\T}: I \cdot \T'^{-1,0} \to \A ut_{\T'/\T}, \quad \theta \mapsto (\xi \mapsto \xi + [\theta,\xi])$$
 into the sheaf of lifting automorphisms. It is injective due to Lemma~\ref{injective-G}. By the very definition of an automorphism of TWG deformations, every $\varphi \in \A ut_{\T'/\T}$ is (locally) induced by some $\theta \in \m \cdot \T'^{-1,0}$ where $\m \subset A'$ is the maximal ideal. Because the induced automorphism on $\T^{\bullet,\bullet}$ is the identity, we have indeed $\theta \in I \cdot \T'^{-1,0}$, so $\mathrm{exp}_{\T'/\T}$ is an isomorphism.
 Finally we have $\TW^{-1,0}(G^\bullet_{X_0/S_0}) \otimes_k I \cong I \cdot \T'^{-1,0}$, so the result follows by the standard methods that are developed for smooth deformations in \cite[III]{SGAI}.
\end{proof}

\begin{cor}
 Let $I_S \subset A = \cO(S)$ be the maximal ideal. Then every automorphism of a TWG deformation $\T$ is a gauge transform, i.e., $\mathrm{exp}_\T: I_S \cdot \T^{-1,0} \to \A ut_\T$ is an isomorphism (once we endow $I_S \cdot \T^{-1,0}$ with the Baker--Campbell--Hausdorff product).  
\end{cor}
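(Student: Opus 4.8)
The plan is to deduce this from Lemma~\ref{TWG-first-order-def} by d\'evissage along a chain of small extensions, treating $\A ut_\T$ as a sheaf of groups and $\mathrm{exp}_\T$ as a homomorphism of sheaves of groups. First I would record that $\mathrm{exp}_\T$ is well defined: the bigrading rule $[\TW^{-1,0},\TW^{-1,0}] \subset \TW^{-1,0}$ makes $\T^{-1,0}$ a sheaf of Lie algebras, the subsheaf $I_S \cdot \T^{-1,0}$ is a nilpotent Lie subalgebra since $I_S$ is nilpotent, and the Baker--Campbell--Hausdorff product turns it into a sheaf of groups. As in the proof of Lemma~\ref{TWG-first-order-def}, each $\mathrm{exp}_\theta$ is locally a gauge transform on $\TW^{\bullet,\bullet}(G^\bullet_{V_\alpha/S})$ and hence lies in $\A ut_\T$, and $\theta \mapsto \mathrm{exp}_\theta$ is a group homomorphism (Section~\ref{def-gerstenhaber-algebra-sec}). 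Because $\theta \in I_S \cdot \T^{-1,0}$ dies in the central fiber $\T \otimes_A k = \TW^{\bullet,\bullet}(G^\bullet_{X_0/S_0})$, the automorphism $\mathrm{exp}_\theta$ restricts to the identity there, so $\mathrm{exp}_\T$ indeed lands in $\A ut_\T$.

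Next I would set up the d\'evissage. Since $A$ is Artin local with residue field $k$, I choose a chain of small extensions $A = A_n \twoheadrightarrow A_{n-1} \twoheadrightarrow \cdots \twoheadrightarrow A_0 = k$ in $\mathbf{Art}_Q$, with $0 \to I_j \to A_j \to A_{j-1} \to 0$, and set $\T_j := \T \otimes_A A_j$ with maximal ideal $I_{S_j} \subset A_j$, so that $\T_n = \T$ and $\T_0 = \TW^{\bullet,\bullet}(G^\bullet_{X_0/S_0})$. I claim that $\mathrm{exp}_{\T_j}\colon I_{S_j}\cdot \T_j^{-1,0} \to \A ut_{\T_j}$ is an isomorphism of sheaves of groups for every $j$, proved by induction on $j$. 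The base case $j=0$ is trivial: $I_{S_0}=0$, and the only automorphism of $\T_0$ compatible with the identity map to itself is the identity.

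For the inductive step I would handle injectivity and surjectivity separately. For injectivity (a global statement), if $\mathrm{exp}_\theta = \mathrm{id}$ for a section $\theta$ of $I_{S_j}\cdot\T_j^{-1,0}$, then reducing modulo $I_j$ and applying the inductive hypothesis gives $\theta \in I_j \cdot \T_j^{-1,0}$, whence $\theta = 0$ by the injectivity part of Lemma~\ref{TWG-first-order-def}, which itself rests on Lemma~\ref{injective-G}. For surjectivity, which is a local statement and so permits shrinking, I take $\varphi \in \A ut_{\T_j}$, restrict it to $\bar\varphi \in \A ut_{\T_{j-1}}$, and use the inductive hypothesis to write $\bar\varphi = \mathrm{exp}_{\bar\theta}$. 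The surjections $I_{S_j} \to I_{S_{j-1}}$ and $\T_j^{-1,0} \to \T_{j-1}^{-1,0}$ let me lift $\bar\theta$ to a section $\theta_0$ of $I_{S_j}\cdot\T_j^{-1,0}$; then $\varphi \circ \mathrm{exp}_{\theta_0}^{-1}$ restricts to the identity on $\T_{j-1}$ and so lies in the relative automorphism sheaf $\A ut_{\T_j/\T_{j-1}}$. By Lemma~\ref{TWG-first-order-def} applied to the small extension $0 \to I_j \to A_j \to A_{j-1} \to 0$, it equals $\mathrm{exp}_{\theta_1}$ for some $\theta_1 \in I_j \cdot \T_j^{-1,0} \subset I_{S_j}\cdot\T_j^{-1,0}$. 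Since $\mathrm{exp}$ is a group homomorphism, $\varphi = \mathrm{exp}_{\theta_1 * \theta_0}$ with $\theta_1 * \theta_0 \in I_{S_j}\cdot\T_j^{-1,0}$, the latter being closed under the BCH product.

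Taking $j = n$ yields the corollary. The only work beyond bookkeeping is checking that these reductions and lifts behave well at the level of sheaves; the genuinely substantive input---that relative automorphisms over a single small extension are exactly gauge transforms---has already been supplied by Lemma~\ref{TWG-first-order-def}. I therefore expect the main obstacle to be organizing the d\'evissage cleanly (in particular, keeping track of which statements are global and which are local) rather than any new estimate or cohomological computation.
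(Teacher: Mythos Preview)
Your proof is correct and takes essentially the same approach as the paper: induction on the length of $A$ by breaking it into a chain of small extensions and invoking Lemma~\ref{TWG-first-order-def} at each step. The paper's proof is a single sentence to this effect; you have simply supplied the details of the d\'evissage that the paper leaves implicit.
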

\begin{proof}
 This follows by induction on the length of $A$, breaking the extension into small extensions.
\end{proof}

\begin{rem}
 If $S \to S'$ is a map in $\mathbf{Art}_Q$ corresponding to a homomorphism $A' \to A$ of Artin rings, then $\PV_{X_0/S'} \otimes_{A'} A$ is a TWG deformation on $S$. In particular, there is an isomorphism to $\PV_{X_0/S}$. Moreover, we have an induced homomorphism $\PV_{X_0/S'} \to \PV_{X_0/S'} \otimes_{A'} A$. However, there is \emph{no} canonical 
 homomorphism $\PV_{X_0/S'} \to \PV_{X_0/S}$ since there is no preferred isomorphism $\PV_{X_0/S'} \otimes_{A'} A \cong \PV_{X_0/S}$. This is because $\PV_{X_0/S}$ has many automorphisms. We see that $S \mapsto \PV_{X_0/S}$ is not functorial in a strict sense.
\end{rem}

\section{Differentials on TWG Deformations}

After constructing a unique TWG deformation $\PV_{X_0/S}$, we now study differentials $d: \PV_{X_0/S}^{p,q} \to \PV_{X_0/S}^{p,q+1}$ on it. Recall from 
e.g.~\cite[Lemma 2.5]{ChanLeungMa2019} that if $\theta \in \I_{X/X_0} \cdot \TW^{-1,0}(G^\bullet_{X/S})$, then 
\begin{equation}\label{gauge-formula}
 \mathrm{exp}_\theta \circ (d + [\xi,-]) \circ \mathrm{exp}_{-\theta} = d + [\mathrm{exp}_\theta(\xi) - T([\theta,-])(d\theta),-]
\end{equation}
where $T([\theta,-])$ means to plug in the operator 
$[\theta,-]$ into the power series expansion of $T(x) = \frac{\mathrm{exp}(x) - 1}{x}$. This means under a gauge transform, the differential $d$ is transformed into something of the form $d + [\eta,-]$ for $\eta \in \I_{X/X_0}\cdot \TW^{-1,1}(G^\bullet_{X/S})$. Hence every differential on $\PV_{X_0/S}$ should be locally of this form. More formally we define:

\begin{defn}
 Let $\T^{\bullet,\bullet}$ be a TWG deformation. Then a 
 \emph{predifferential} is a map $d: \T^{p,q} \to \T^{p,q + 1}$ that satisfies \eqref{GD}, that is compatible with 
 the differential on $\TW^{\bullet,\bullet}(G^\bullet_{X_0/S_0})$, and such that 
 $$d|_{V_\alpha} - \chi_\alpha^{-1} \circ d_\alpha \circ \chi_\alpha = [\eta_\alpha,-]$$ for some $\eta_\alpha \in \I \cdot \T^{-1,1}|_{V_\alpha}$ where $d_\alpha$ is the differential on $\TW^{\bullet,\bullet}(G^\bullet_{V_\alpha/S})$.
 It is a \emph{differential} if $d^2 = 0$. We denote the set of predifferentials by $\mathrm{PDiff}(\T^{\bullet,\bullet})$ and the set of differentials by $\mathrm{Diff}(\T^{\bullet,\bullet})$. Two predifferentials $d_1,d_2$ are \emph{gauge equivalent}, if there is an automorphism $\psi: \T^{\bullet,\bullet} \to \T^{\bullet,\bullet}$ of TWG deformations such that 
 $\psi \circ d_1 = d_2 \circ \psi$.
\end{defn}
\begin{ex}\label{G-to-diff}
 Let $\G^\bullet$ be a Gerstenhaber deformation. Then 
 the differential of $\TW^{\bullet,\bullet}(\G^\bullet)$ is a differential in the above sense. Since $\TW^{\bullet,\bullet}(\G^\bullet) \cong \PV_{X_0/S}$, it induces a differential on $\PV_{X_0/S}$ whose gauge equivalence class does not depend on the chosen isomorphism.
\end{ex}

Every restriction $\PV_{X_0/S'} \to \PV_{X_0/S}$ induces a restriction $\mathrm{Diff}(\PV_{X_0/S'}) \to \mathrm{Diff}(\PV_{X_0/S})$ on differentials. On gauge equivalence classes, this restriction is independent of the chosen map 
$\PV_{X_0/S'} \to \PV_{X_0/S}$, so we obtain a 
functor 
$$TD_{X_0/S_0}: \mathbf{Art}_Q \to \mathbf{Set}$$
of Artin rings. To construct an inverse of the natural transformation 
$\mathbf{tw}: GD_{X_0/S_0} \Rightarrow TD_{X_0/S_0}$ given by Example~\ref{G-to-diff}, we need a lemma.

\begin{lemma}\label{local-gauge-unique}
 Let $\eta_\alpha \in I_S \cdot \TW^{-1,1}(G^\bullet_{V_\alpha/S})$ be such that $d_\alpha + [\eta_\alpha,-]$ is a differential. Then the two differentials $d_\alpha$ and $d_\alpha + [\eta_\alpha,-]$ are gauge equivalent.
\end{lemma}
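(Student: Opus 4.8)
The plan is to recognize the statement as the gauge-triviality of Maurer--Cartan elements in an acyclic differential graded Lie algebra, and to exploit formula~\eqref{gauge-formula}. First I would record that, by the derivation identities \eqref{GD} together with $d_\alpha^2 = 0$, the hypothesis that $d_\alpha + [\eta_\alpha,-]$ squares to zero is equivalent to the Maurer--Cartan equation
\[
 d_\alpha \eta_\alpha + \tfrac{1}{2}[\eta_\alpha,\eta_\alpha] = 0
\]
in the differential graded Lie algebra $\mathfrak g_\alpha := \big(\Gamma(V_\alpha, \TW^{-1,\bullet}(G^\bullet_{V_\alpha/S})), d_\alpha, [-,-]\big)$, whose bracket preserves the grading by $q$ (since $[\TW^{-1,q},\TW^{-1,j}] \subset \TW^{-1,q+j}$) and whose $I_S$-part is nilpotent because $A$ is Artinian. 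Setting $\xi = 0$ in \eqref{gauge-formula}, a gauge transform $\mathrm{exp}_\theta$ with $\theta \in I_S \cdot \TW^{-1,0}(G^\bullet_{V_\alpha/S})$ satisfies $\mathrm{exp}_\theta \circ d_\alpha \circ \mathrm{exp}_{-\theta} = d_\alpha + \big[-T([\theta,-])(d_\alpha\theta),\,-\big]$, and the element $-T([\theta,-])(d_\alpha\theta)$ is precisely the image of $0$ under the gauge action of $\mathrm{exp}_\theta$. Hence $d_\alpha$ and $d_\alpha + [\eta_\alpha,-]$ are gauge equivalent as differentials if and only if the Maurer--Cartan elements $0$ and $\eta_\alpha$ are gauge equivalent in $\mathfrak g_\alpha$, and it suffices to gauge $\eta_\alpha$ to $0$.

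The key cohomological input is acyclicity. For fixed $p = -1$, the Thom--Whitney complex $(\TW^{-1,\bullet}(G^\bullet_{V_\alpha/S}), d_\alpha)$ is an exact resolution of $G^{-1}_{V_\alpha/S}$, i.e.~its cohomology sheaves vanish in degrees $q \geq 1$. Since $V_\alpha$ is affine and the $\TW^{-1,q}$ are quasi-coherent, $\Gamma(V_\alpha,-)$ is exact on them, so $\mathfrak g_\alpha$ itself has $H^q(\mathfrak g_\alpha) = 0$ for all $q \geq 1$; in particular $H^1(\mathfrak g_\alpha) = 0$. This is the analog here of the acyclicity used in Lemma~\ref{TWG-first-order-def}.

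I would then gauge $\eta_\alpha$ to $0$ by the standard order-by-order induction along the $I_S$-adic filtration. Write $\m := I_S$ and choose $N$ with $\m^{N+1} = 0$. Suppose a composite of gauge transforms has already moved $\eta_\alpha$ to some $\tilde\eta \in \m^k \cdot \TW^{-1,1}$ still satisfying Maurer--Cartan (the base case $k = 1$ being the hypothesis). Then $[\tilde\eta,\tilde\eta] \in \m^{2k} \subset \m^{k+1}$, so $d_\alpha\tilde\eta \equiv 0 \pmod{\m^{k+1}}$; using flatness to identify $\m^k\TW^{-1,\bullet}/\m^{k+1}$ with the reduction of the resolution tensored by $\m^k/\m^{k+1}$ (as in Lemma~\ref{TWG-first-order-def}), the class of $\tilde\eta$ lies in $H^1$ of an acyclic complex and is therefore $d_\alpha$-exact: $\tilde\eta \equiv d_\alpha\theta_k \pmod{\m^{k+1}}$ for some $\theta_k \in \m^k \cdot \TW^{-1,0}$. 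Applying $\mathrm{exp}_{-\theta_k}$ removes the order-$k$ part and, since the Maurer--Cartan property is preserved by the gauge action, pushes $\tilde\eta$ into $\m^{k+1}\cdot\TW^{-1,1}$. After at most $N$ steps the twisted element vanishes; composing the finitely many $\mathrm{exp}_{-\theta_k}$ yields, via the Baker--Campbell--Hausdorff product (and the Corollary that every automorphism of a TWG deformation is a gauge transform), a single gauge transform $\mathrm{exp}_\theta$ realizing the desired gauge equivalence between $d_\alpha$ and $d_\alpha + [\eta_\alpha,-]$.

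The main obstacle I anticipate is the solvability at each inductive step: one must verify that the order-$k$ part of the twisted Maurer--Cartan element is genuinely $d_\alpha$-exact with a \emph{global} primitive over $V_\alpha$. This is exactly where the acyclicity of the Thom--Whitney resolution over the affine patch, together with the flat identification of the graded pieces, is indispensable; everything else is the routine bookkeeping of signs in \eqref{gauge-formula} and the convergence guaranteed by nilpotency of $\m$.
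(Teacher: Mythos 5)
Your proposal is correct and follows essentially the same route as the paper: both arguments reduce, by an order-by-order induction (yours along the powers of $I_S$ inside a fixed $S$, the paper's along small extensions $S \to S'$ of the base, lifting gauge transforms), to the linearized case where the Maurer--Cartan equation degenerates to $d_\alpha\eta = 0$, and then invoke acyclicity of $\TW^{-1,\bullet}(G^\bullet_{V_\alpha/S})$ over the affine patch $V_\alpha$ to produce a global primitive and gauge the element away via formula~\eqref{gauge-formula}. The only slip is the self-acknowledged sign: with $\tilde\eta \equiv d_\alpha\theta_k \pmod{\m^{k+1}}$, formula~\eqref{gauge-formula} gives $\mathrm{exp}_{\theta_k} \circ (d_\alpha + [\tilde\eta,-]) \circ \mathrm{exp}_{-\theta_k} \equiv d_\alpha + [\tilde\eta - d_\alpha\theta_k,-] \pmod{\m^{k+1}}$, so you must conjugate by $\mathrm{exp}_{\theta_k}$ rather than $\mathrm{exp}_{-\theta_k}$ (equivalently, choose $\theta_k$ with $d_\alpha\theta_k \equiv -\tilde\eta$).
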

\begin{proof}
 First assume $S \to S'$ is a small extension, set $\T' := \TW^{\bullet,\bullet}(G^\bullet_{V_\alpha/S'})$ etc. and consider $\eta_\alpha \in I_{S'}\cdot \T'^{-1,1}$ such that $\eta_\alpha|_S = 0$. Then the condition that $d_\alpha + [\eta_\alpha,-]$ is a differential (has square zero) becomes $d\eta_\alpha = 0$ (cf.~Maurer--Cartan equation), and for $\theta \in I \cdot \T'^{-1,0}$ formula~\eqref{gauge-formula} simplifies to 
 $ \mathrm{exp}_\theta \circ d \circ \mathrm{exp}_{-\theta} = d + [-d\theta,-]$. Because $V_\alpha$ is affine, we have 
 $H^1(V_\alpha, G^{-1}_{V_\alpha/S'}) = 0$, so
 we can find a gauge transform in $\A ut_{\T'/\T}$ with 
 $\mathrm{exp}_\theta \circ d_\alpha = (d + [\eta_\alpha,-]) \circ \mathrm{exp}_\theta$. 
 
 Now the proof is by induction on the length of $A = \cO(S)$. If $\eta_\alpha \in I_{S'}\cdot \T'^{-1,1}$ such that $d_\alpha + [\eta_\alpha,-]$ is a differential, then the restrictions to $\T$ are gauge equivalent. The gauge transform can be lifted to $\T'$, giving a gauge equivalence of $d_\alpha + [\eta_\alpha,-]$ and $d_\alpha + [\eta_\alpha',-]$ for some $\eta_\alpha'$ with $\eta_\alpha'|_S = 0$, the latter being gauge equivalent to $d_\alpha$ by the above argument.
\end{proof}

Given a differential $d \in \mathrm{Diff}(\PV_{X_0/S})$, the isomorphism 
$$\chi_\alpha: (\PV_{X_0/S},d)|_{V_\alpha} \cong (\TW^{\bullet,\bullet}(G^\bullet_{V_\alpha/S}),d_\alpha + [\chi_\alpha(\eta_\alpha),-])$$ is compatible with differentials. By Lemma~\ref{local-gauge-unique} we can further compose with a gauge transform to $(\TW^{\bullet,\bullet}(G^\bullet_{V_\alpha/S}),d_\alpha)$. Thus taking cohomology yields a Gerstenhaber deformation $\cH^\bullet(\PV_{X_0/S},d)$ whose isomorphism class does not depend on the chosen gauge transform (namely, every gauge transform that leaves the differential invariant descends to a gauge transform on cohomology). Likewise, if $d_1$ and $d_2$ are gauge equivalent differentials, the induced Gerstenhaber deformations are isomorphic. We find a natural transformation
$$\mathbf{h}: TD_{X_0/S_0} \Rightarrow GD_{X_0/S_0}$$
which is inverse to the natural transformation $\mathbf{tw}$ constructed above.

\begin{lemma}
 The functors $GD_{X_0/S_0}$ and $TD_{X_0/S_0}$ are naturally equivalent.
\end{lemma}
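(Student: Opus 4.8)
The plan is to show that the two natural transformations already constructed, $\mathbf{tw}: GD_{X_0/S_0} \Rightarrow TD_{X_0/S_0}$ from Example~\ref{G-to-diff} and $\mathbf{h}: TD_{X_0/S_0} \Rightarrow GD_{X_0/S_0}$, are mutually inverse. It suffices to check the two composites are the identity on each $S \in \mathbf{Art}_Q$; naturality in $S$ is then automatic from the functoriality of the Thom--Whitney resolution and of taking cohomology. Logically, if I establish $\mathbf{h} \circ \mathbf{tw} \cong \mathrm{id}$ and, separately, that $\mathbf{h}$ is injective on gauge-equivalence classes, then $\mathbf{tw} \circ \mathbf{h} \cong \mathrm{id}$ follows formally, since $\mathbf{h}(\mathbf{tw}(\mathbf{h}(d))) = \mathbf{h}(d)$ forces $\mathbf{tw}(\mathbf{h}(d))$ and $d$ into the same gauge class.

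First I would treat $\mathbf{h} \circ \mathbf{tw} \cong \mathrm{id}_{GD}$, the easy direction. Given a Gerstenhaber deformation $\G^\bullet$, the map $\mathbf{tw}$ transports the Thom--Whitney differential of $\TW^{\bullet,\bullet}(\G^\bullet)$ to a differential $d$ on $\PV_{X_0/S}$ along some isomorphism $\TW^{\bullet,\bullet}(\G^\bullet) \cong \PV_{X_0/S}$. Cohomology is invariant under this isomorphism, so $\mathbf{h}(d) = \cH^\bullet(\PV_{X_0/S},d) \cong \cH^\bullet(\TW^\bullet(\G^\bullet))$, and the right-hand side is canonically $\G^\bullet$ because $(\G^\bullet,0) \to \TW^\bullet(\G^\bullet)$ is an acyclic resolution. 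Since this identification respects the local models $G^\bullet_{V_\alpha/S}$ and the augmentation to $G^\bullet_{X_0/S_0}$, it is an isomorphism of Gerstenhaber deformations.

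The substance lies in the injectivity of $\mathbf{h}$: two differentials $d, d'$ on $\PV_{X_0/S}$ with isomorphic cohomology Gerstenhaber deformations must be gauge equivalent. I would argue by induction on the length of $A = \cO(S)$, breaking $S$ into small extensions. After lifting the gauge equivalence of the restrictions to the smaller base, I may assume $d$ and $d'$ agree modulo the ideal $I$ of a small extension. Locally on each $V_\alpha$, Lemma~\ref{local-gauge-unique} carries both $\chi_\alpha(d)$ and $\chi_\alpha(d')$ to the standard differential $d_\alpha$, so locally $d' = d + [\eta_\alpha,-]$; using the $(-1)$-injectivity of Lemma~\ref{injective-G} these local differences are uniquely determined and glue to a single global $\zeta \in \I \cdot \PV^{-1,1}$ with $d' = d + [\zeta,-]$. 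The condition $d'^2 = d^2 = 0$ together with $I\cdot\m = 0$ forces $\zeta$ to be $d$-closed.

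The main obstacle is the globalization, namely writing $\zeta = d\theta$ for a \emph{global} $\theta \in \I \cdot \PV^{-1,0}$, after which $\mathrm{exp}_\theta$ carries $d$ to $d'$ through formula~\eqref{gauge-formula} (which simplifies to $\mathrm{exp}_\theta \circ d \circ \mathrm{exp}_{-\theta} = d + [-d\theta,-]$ in the small-extension case). The vanishing of the $d$-cohomology of $\TW^{\bullet,\bullet}$ in positive $q$-degree makes $\zeta$ locally $d$-exact, so $\zeta$ is a global section of $B := \mathrm{im}(d)\subset\PV^{-1,1}$; the short exact sequence $0 \to \G^{-1} \to \PV^{-1,0} \xrightarrow{d} B \to 0$ then shows that the obstruction to a global primitive lives in $H^1(X_0, G^{-1}_{X_0/S_0} \otimes_k I)$. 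I expect this class to be exactly the discrepancy between the two cohomology deformations in the sense of the bookkeeping of Lemma~\ref{TWG-first-order-def}; since they are isomorphic, the class is killed once we correct $\theta$ by the gauge coming from that isomorphism, and the induction closes. Throughout, Lemma~\ref{tw-acyclic} supplies both the acyclicity that trivializes the higher $q$-cohomology locally and the cohomological control needed for the deformation-theoretic step.
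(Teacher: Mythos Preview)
Your overall strategy---prove $\mathbf{h}\circ\mathbf{tw}=\mathrm{id}$ and then show $\mathbf{h}$ injective---matches the paper exactly, and your treatment of $\mathbf{h}\circ\mathbf{tw}$ is the same. The divergence is in the injectivity step: the paper does \emph{not} induct on the length of $A$. Instead, given $\psi:\cH^\bullet(\PV,d_1)\cong\cH^\bullet(\PV,d_2)$, it observes that on each $V_\alpha$ (after transporting both differentials to the standard $d_\alpha$ via Lemma~\ref{local-gauge-unique}) $\psi$ is a gauge $\exp_{\theta_\alpha}$ of $G^\bullet_{V_\alpha/S}$. Because $G^{-1}_{V_\alpha/S}\hookrightarrow\TW^{-1,0}(G^\bullet_{V_\alpha/S})$ identifies $G^{-1}$ with $\ker(d_\alpha)$, there is a \emph{unique} lift $\exp_{\tilde\theta_\alpha}$ to a gauge of $\TW^{\bullet,\bullet}$ commuting with $d_\alpha$. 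Transported back this gives local isomorphisms $(\PV,d_1)|_{V_\alpha}\to(\PV,d_2)|_{V_\alpha}$ inducing $\psi$ on cohomology, and uniqueness of the lift forces agreement on overlaps. No induction, no obstruction class.

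Your inductive route has a genuine gap at ``since they are isomorphic, the class is killed''. After your reduction you have $d''=d+[\zeta,-]$ with $\zeta\in I\cdot\PV^{-1,1}$ closed, and $[\zeta]\in H^1(X_0,G^{-1}_{X_0/S_0})\otimes_k I$ is the obstruction to writing $\zeta=d\theta$ with $\theta\in I\cdot\PV^{-1,0}$, i.e., to a gauge \emph{restricting to the identity on $S$}. This class is the difference between $\cH(d)$ and $\cH(d'')$ \emph{as liftings} of $\cH(d|_S)$. But your hypothesis only supplies an isomorphism $\psi'':\cH(d)\cong\cH(d'')$ whose restriction $\psi''|_S$ is an arbitrary automorphism of $\cH(d|_S)$, because the gauge you lifted from the smaller base was not tied to $\psi$. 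Two liftings that are not isomorphic \emph{as liftings} can still be abstractly isomorphic after twisting by such an automorphism, so $[\zeta]$ need not vanish. To close the induction you must strengthen the hypothesis to: there exists a gauge carrying $d_1$ to $d_2$ \emph{and inducing $\psi$ on cohomology}. Then after the lift $\psi''|_S=\mathrm{id}$ and your obstruction argument goes through. But tracking $\psi$ in this way is precisely what the paper's direct lifting accomplishes in one stroke, without the inductive bookkeeping.
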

\begin{proof}
 Given a Gerstenhaber deformation $\G^\bullet$, we have a resolution $\G^\bullet \to \TW^\bullet(\G^\bullet)$, so $\mathbf{h} \circ \mathbf{tw} = \mathrm{Id}$. It thus suffices to prove that $\mathbf{h}$ is injective. Let $d_1,d_2$ be two differentials and let $\psi: \cH^\bullet(\PV_{X_0/S},d_1) \cong \cH^\bullet(\PV_{X_0/S},d_2)$ be an isomorphism. Fix the structure of Gerstenhaber deformation on the cohomologies by choosing isomorphisms $\tilde\chi_{\alpha,i}: (\PV_{X_0/S},d_i)|_{V_\alpha} \cong (\TW^{\bullet,\bullet}(G^\bullet_{V_\alpha/S}),d_\alpha)$ as in the construction of $\mathbf{h}$. The map $\psi$ induces a gauge transform $\mathrm{exp}_{\theta_\alpha}: G^\bullet_{V_\alpha/S} \to G^\bullet_{V_\alpha/S}$ which has a unique lift to a gauge transform $\mathrm{exp}_{\tilde\theta_\alpha}: \TW^{\bullet,\bullet}(G^\bullet_{V_\alpha/S}) \to \TW^{\bullet,\bullet}(G^\bullet_{V_\alpha/S})$ since $\TW^{-1,\bullet}(G^\bullet_{V_\alpha/S})$ is a resolution of $G^{-1}_{V_\alpha/S}$. Using the $\tilde\chi_{\alpha,i}$, we obtain an isomorphism $(\PV_{X_0/S},d_1)|_{V_\alpha} \to (\PV_{X_0/S},d_2)|_{V_\alpha}$ inducing $\psi$ in cohomology. Again by uniqueness they glue to a global isomorphism, so $d_1,d_2$ are gauge equivalent.
\end{proof}

\section{Maurer--Cartan Elements}

We construct a $k\llbracket Q\rrbracket$-linear pdgla $L^\bullet_{X_0/S_0}$ which controls $TD_{X_0/S_0}$ and thus log smooth deformations. To this end, we relate differentials on $\PV_{X_0/S}$ to an appropriate Maurer--Cartan equation.

The predifferentials $\mathrm{PDiff}(\PV_{X_0/S})$---considered as a sheaf on $X_0$---form an $I_S \cdot \PV_{X_0/S}^{-1,1}$-torsor for its additive group structure. Indeed, for $\eta \in I_S \cdot \PV_{X_0/S}^{-1,1}$ and $d \in \mathrm{PDiff}(\PV_{X_0/S})$, also $d + [\eta,-]$ is a predifferential. Every predifferential is (locally) of this form and Lemma~\ref{injective-G} shows that for $\eta \not= \eta'$ we have $d + [\eta,-] \not= d + [\eta',-]$. Using Lemma~\ref{tw-acyclic} inductively over small extensions, we find 
$H^\ell(X_0,\PV^{i,j}_{X_0/S}) = 0$ for $\ell \geq 1$. This suggests $H^1(X_0,I_S \cdot \PV^{-1,1}_{X_0/S}) = 0$ (which does not follow immediately), so there would be always a predifferential. In fact, more is true:

\begin{lemma}\label{prediff-lift}
 Let $S \to S'$ be a small extension and let $d \in \mathrm{PDiff}(\PV_{X_0/S})$. Choose a restriction $\PV_{X_0/S'} \to \PV_{X_0/S}$ (which is unique up to automorphisms of $\PV_{X_0/S}$). Then there is a predifferential $d' \in \mathrm{PDiff}(\PV_{X_0/S'})$ with $d'|_S = d$.
\end{lemma}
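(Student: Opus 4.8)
The plan is to lift $d$ to $S'$ by a local-to-global argument in the spirit of the proof of Lemma~\ref{TWG-first-order-def}: produce local predifferentials over the affine cover $\V = \{V_\alpha\}$, organize their failure to glue into a cohomology class, and then invoke acyclicity to kill it. Throughout, fix the chosen restriction $r: \PV_{X_0/S'} \to \PV_{X_0/S}$; by flatness (as in Example~\ref{local-TW-ex}) its kernel is $I \cdot \PV_{X_0/S'}$, where $I = \ker(A' \to A)$. The essential reorganization is to view the lifting problem as a torsor problem: let $\G := I \cdot \PV^{-1,1}_{X_0/S'}$ and consider the sheaf $P'$ on $X_0$ whose sections over an open $W$ are the predifferentials $d' \in \mathrm{PDiff}(\PV_{X_0/S'}|_W)$ with $r(d') = d|_W$. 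Recalling that predifferentials on $\PV_{X_0/S'}$ form an $I_{S'}\cdot\PV^{-1,1}_{X_0/S'}$-torsor (as noted just above the lemma), two sections of $P'(W)$ differ by $[\eta,-]$ for a unique $\eta \in I_{S'}\cdot\PV^{-1,1}_{X_0/S'}$; since both restrict to $d$, the operator $[\eta,-]$ vanishes modulo $I$, and $(-1)$-injectivity (Lemma~\ref{injective-G}) forces $\eta \in \G(W)$. Conversely $d' + [\mu,-] \in P'(W)$ for every $\mu \in \G(W)$, so $P'$ is a pseudotorsor under $\G$, and again Lemma~\ref{injective-G} shows the action is free.

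Next I would check that $P'$ is \emph{locally nonempty}, hence a genuine $\G$-torsor. This is a purely local statement: over a small $W \subset V_\alpha$, the predifferential $d|_W$ is, after transport through a local trivialization $\chi_\alpha$, a gauge conjugate of the canonical differential $d_\alpha$, i.e.\ of the form $d_\alpha + [\zeta,-]$ with $\zeta \in I_S \cdot \TW^{-1,1}(G^\bullet_{V_\alpha/S})$ (gauge conjugates retain this shape by formula~\eqref{gauge-formula}). Since $V_\alpha$ is affine and $\TW^{-1,1}(G^\bullet_{V_\alpha/S'}) \to \TW^{-1,1}(G^\bullet_{V_\alpha/S})$ is a surjection of quasi-coherent sheaves with kernel $I \cdot \TW^{-1,1}(G^\bullet_{V_\alpha/S'})$, I can lift $\zeta$ to some $\zeta' \in I_{S'} \cdot \TW^{-1,1}(G^\bullet_{V_\alpha/S'})$ and transport $d_\alpha + [\zeta',-]$ back. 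The resulting operator satisfies the Leibniz relations~\eqref{GD}, is compatible with the central-fiber differential (reducing modulo $\m_{A'}$ sends $\zeta'$ to $0$ because $\zeta' \in I_{S'}=\m_{A'}$), and restricts to $d|_W$, so it is a section of $P'(W)$.

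Having exhibited $P'$ as a $\G$-torsor, the obstruction to the existence of a global section is its class in $H^1(X_0,\G)$ (equivalently, choosing local lifts $d'_\alpha$ and setting $d'_\alpha - d'_\beta = [\eta_{\alpha\beta},-]$ yields a \v{C}ech $1$-cocycle $(\eta_{\alpha\beta})$ valued in $\G$, whose coboundaries correspond to modifying the $d'_\alpha$ by $[\mu_\alpha,-]$ with $\mu_\alpha \in \G(V_\alpha)$). It is here that the small-extension hypothesis is indispensable: because $\m_{A'} \cdot I = 0$, the ideal $I$ is a $k$-vector space, so flatness of $\PV^{-1,1}_{X_0/S'}$ over $A'$ together with the central-fiber base change of Lemma~\ref{TW-base-change-compat} gives a canonical identification $\G \cong \TW^{-1,1}(G^\bullet_{X_0/S_0}) \otimes_k I$. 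By Lemma~\ref{tw-acyclic} the sheaf $\TW^{-1,1}(G^\bullet_{X_0/S_0})$ is acyclic, whence $H^1(X_0,\G) = 0$. Therefore the torsor $P'$ is trivial and admits a global section $d' \in \mathrm{PDiff}(\PV_{X_0/S'})$ with $r(d') = d$, as required.

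I expect the main obstacle to be not the vanishing itself but the bookkeeping that makes it available. Two points carry the weight: first, pinning the gluing defect down to a cocycle valued in the \emph{relative} sheaf $\G = I \cdot \PV^{-1,1}_{X_0/S'}$ rather than in the larger $I_{S'}\cdot\PV^{-1,1}_{X_0/S'}$, which is exactly where $(-1)$-injectivity (Lemma~\ref{injective-G}) is used in an essential way; and second, the clean reduction $\G \cong \TW^{-1,1}(G^\bullet_{X_0/S_0}) \otimes_k I$, which is precisely the step that fails for a general surjection $A' \to A$ and explains why the remark preceding the lemma says that $H^1(X_0, I_S\cdot\PV^{-1,1}_{X_0/S}) = 0$ ``does not follow immediately.'' Confining the argument to small extensions — as is done systematically elsewhere in the paper — is what secures the favorable, square-zero situation in which the obstruction sheaf becomes acyclic.
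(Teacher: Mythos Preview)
Your proof is correct and follows essentially the same route as the paper: both recognize that the sheaf of predifferentials on $\PV_{X_0/S'}$ restricting to $d$ is an $I\cdot\PV^{-1,1}_{X_0/S'}$-torsor, identify this coefficient sheaf with $\TW^{-1,1}(G^\bullet_{X_0/S_0})\otimes_k I$ via the small-extension hypothesis, and conclude by the acyclicity of Lemma~\ref{tw-acyclic}. You spell out in detail what the paper compresses into two sentences---notably the verification of local non-emptiness and the use of Lemma~\ref{injective-G} to pin down the structure group---but the architecture is the same.
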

\begin{proof}
 The sheaf of predifferentials on $\PV_{X_0/S'}$ that restrict to $d$ over $S$ is an $I\cdot \PV_{X_0/S'}^{-1,1}$-torsor. Since $H^1(X_0,I\cdot \PV_{X_0/S'}^{-1,1}) \cong H^1(X_0,\TW^{-1,1}(G^\bullet_{X_0/S_0}) \otimes I) = 0$, it has a global section.
\end{proof}
\begin{cor}\label{trivial-torsor}
 $\mathrm{PDiff}(\PV_{X_0/S})$ is a trivial $I_S \cdot \PV_{X_0/S}^{-1,1}$-torsor. Every predifferential $d$ induces a bijection $I_S \cdot \PV_{X_0/S}^{-1,1} \to \mathrm{PDiff}(\PV_{X_0/S}), \eta \mapsto d_\eta := d + [\eta,-]$.
\end{cor}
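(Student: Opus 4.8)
The plan is to read off both assertions from the torsor structure established just before Lemma~\ref{prediff-lift}, together with that lemma. Recall that a torsor under a sheaf of abelian groups is trivial exactly when it admits a global section, and that fixing a global section then identifies the torsor with the group. So the entire content of the corollary is the existence of a single global predifferential on $\PV_{X_0/S}$, after which the displayed bijection is automatic. I would make this explicit rather than invoking the (merely suggested, not proven) vanishing $H^1(X_0, I_S \cdot \PV_{X_0/S}^{-1,1}) = 0$.

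First I would produce a global predifferential by induction on the length of $A = \cO(S)$. For the base case $S = S_0$, i.e.~$A = k$, the bigraded Gerstenhaber algebra $\PV_{X_0/S_0}$ is isomorphic to $\TW^{\bullet,\bullet}(G^\bullet_{X_0/S_0})$, whose Thom--Whitney differential is a differential, hence a predifferential, in the sense of the preceding definition; concretely, Example~\ref{G-to-diff} applied to the base Gerstenhaber deformation $G^\bullet_{X_0/S_0}$ supplies it. For the inductive step, write $A$ as a small extension $0 \to J \to A \to \bar A \to 0$ with $\bar A = \cO(\bar S)$ of strictly smaller length, so that $\bar S \to S$ is a small extension. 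By the inductive hypothesis $\mathrm{PDiff}(\PV_{X_0/\bar S})$ is non-empty; choosing $\bar d$ in it and applying Lemma~\ref{prediff-lift} to the small extension $\bar S \to S$ yields a predifferential $d \in \mathrm{PDiff}(\PV_{X_0/S})$ with $d|_{\bar S} = \bar d$. This global section witnesses that the $I_S \cdot \PV_{X_0/S}^{-1,1}$-torsor $\mathrm{PDiff}(\PV_{X_0/S})$ is trivial.

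Having fixed such a $d$, I would then check that $\eta \mapsto d_\eta = d + [\eta,-]$ is a bijection $I_S \cdot \PV_{X_0/S}^{-1,1} \to \mathrm{PDiff}(\PV_{X_0/S})$. It is well-defined because $d + [\eta,-]$ is again a predifferential, as recorded before Lemma~\ref{prediff-lift}, and injective by Lemma~\ref{injective-G}, since $[\eta,-] = [\eta',-]$ forces $\eta = \eta'$. For surjectivity let $d''$ be any predifferential; on each $V_\alpha$ both $d$ and $d''$ differ from $\chi_\alpha^{-1} \circ d_\alpha \circ \chi_\alpha$ by a bracket (Definition~\ref{TWG-def} together with the definition of predifferential), so their difference is $[\zeta_\alpha,-]$ for some $\zeta_\alpha \in I_S \cdot \PV_{X_0/S}^{-1,1}|_{V_\alpha}$, uniquely determined by Lemma~\ref{injective-G}. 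That uniqueness forces $\zeta_\alpha = \zeta_\beta$ on $V_\alpha \cap V_\beta$, so the $\zeta_\alpha$ glue to a global $\zeta$ with $d'' = d_\zeta$.

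The only substantive input is Lemma~\ref{prediff-lift}, whose proof already absorbs the relevant cohomology vanishing; everything here is formal torsor bookkeeping, so the main (already discharged) obstacle lies in that lemma. Within the present argument the single point deserving care is the gluing in the surjectivity step, where I use the $(-1)$-injectivity of the Thom--Whitney construction to pin down the local sections $\zeta_\alpha$ uniquely and thereby force agreement on overlaps, rather than merely agreement up to a \v{C}ech coboundary.
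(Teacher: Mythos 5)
Your proof is correct and takes essentially the same route as the paper, which leaves this corollary's proof implicit: the intended argument is precisely your induction over small extensions---starting from the Thom--Whitney differential on the central fiber (Example~\ref{G-to-diff}) and lifting step by step via Lemma~\ref{prediff-lift}. The remaining torsor bookkeeping (well-definedness, injectivity via Lemma~\ref{injective-G}, surjectivity by local uniqueness of $\zeta_\alpha$ and gluing) matches what the paper records in the paragraph preceding that lemma.
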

\begin{rem}
 The predifferential that we find here corresponds to the operator $\bar\partial_\alpha + [\mathfrak{d}_\alpha,\cdot]$ in \cite[Thm.~3.34]{ChanLeungMa2019}.
\end{rem}

Given a differential $d \in \mathrm{Diff}(\PV_{X_0/S})$ and $\eta \in I_S \cdot \PV_{X_0/S}^{-1,1}$, we find 
$$(d + [\eta,-])^2(\kappa) = [d\eta + \frac{1}{2}[\eta,\eta],\kappa] =: [\ell,\kappa] \ .$$
The element $\ell \in \PV_{X_0/S}^{-1,2}$ is unique with that property by Lemma~\ref{injective-G}, so indeed for every predifferential $d$ there is a unique $\ell = \ell(d) \in \PV_{X_0/S}^{-1,2}$ with $d^2 = [\ell,-]$ (because locally we can compare it to a differential). Now if $d$ is a predifferential, then 
$$(d + [\eta,-])^2 = [d\eta + \frac{1}{2}[\eta,\eta] + \ell(d),-]$$
so $d + [\eta,-]$ is a differential if and only if $d\eta + \frac{1}{2}[\eta,\eta] + \ell = 0$. This is the \emph{Maurer--Cartan equation}.

\begin{rem}
 Our Maurer--Cartan equation corresponds to the \emph{classical Maurer--Cartan equation} in \cite[Defn.~5.10]{ChanLeungMa2019}.
\end{rem}

Our next goal is to construct the $k\llbracket Q\rrbracket$-pdgla $L^\bullet_{X_0/S_0}$. Let $\A := k\llbracket Q\rrbracket$, let $\m_Q \subset \A$ be the maximal ideal, let $\A_k := \A/\m_Q^{k + 1}$, and let $S_k := \Spec (Q \to \A_k)$. Then $S_k \to S_{k + 1}$ is a small extension, so after choosing restrictions $\PV_{X_0/S_{k + 1}} \to \PV_{X_0/S_k}$ we choose inductively compatible predifferentials $d_k$ on $\PV_{X_0/S_k}$ by Lemma~\ref{prediff-lift}. For $S \in \mathbf{Art}_Q$, there is at most one morphism $S \to S_k$, and there is a minimal $k$ such that $\mathrm{Hom}(S,S_k) \not= \emptyset$. For this minimal $k$, we choose a restriction $\PV_{X_0/S_k} \to \PV_{X_0/S}$ and we endow $\PV_{X_0/S}$ with the restricted predifferential $d_k|_S$. The construction yields an $\cO(S)$-pdgla 
$$L^\bullet(S) := (\Gamma(X_0,\PV^{-1,\bullet}_{X_0/S}),\ [-,-],\ d_k|_S,\ \ell(d_k|_S))$$
which is functorial for morphisms $S \to S_\ell$ (where $\ell \geq k$). In particular, using Corollary~\ref{global-sec-base-change-compat}, we get exact sequences 
$$0 \to \m_Q^{k + 1}\cdot L^\bullet(S_{k + 1}) \to L^\bullet(S_{k + 1}) \to L^\bullet(S_k) \to 0$$
on the level of global sections where the right hand map is compatible with $d$ and $\ell$. The limit 
$$L^\bullet := L^\bullet_{X_0/S_0} := \varprojlim L^\bullet(S_k)$$
is an $\A$-pdgla since its pieces are complete by \cite[09B8]{stacks} (note the index shift).

\begin{rem}
 In case $Q = 0$, the pdgla $L^\bullet$ is an actual dgla. Namely, we have $\A = k\llbracket 0\rrbracket = k$ and thus $\A_k = k$---all restrictions $\PV_{X_k/S_0} \to \PV_{X_0/S_0}$ are isomorphisms. Therefore, $d_k = d_0$ is a differential and $\ell(d_k) = 0$.
\end{rem}

\begin{prop}
 There is a natural equivalence $\mathbf{mc}: \mathrm{Def}_{L^\bullet} \Rightarrow TD_{X_0/S_0}$.
\end{prop}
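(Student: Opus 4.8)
The plan is to read $\mathbf{mc}$ straight off the dictionary between predifferentials and sections of $I_S \cdot \PV^{-1,1}_{X_0/S}$ recorded in Corollary~\ref{trivial-torsor}. Fix $A \in \mathbf{Art}_Q$, put $S = \Spec A$, and let $k_0$ be the minimal $k$ with $\mathrm{Hom}(S,S_k) \neq \emptyset$, so that $L^\bullet(S)$ carries the predifferential $d := d_{k_0}|_S$ together with the element $\ell = \ell(d)$ appearing in the Maurer--Cartan equation. Since the $\A$-algebra structure on $A$ factors through $\A_{k_0}$, Corollary~\ref{global-sec-base-change-compat} identifies $L^\bullet \otimes_\A A$ with $L^\bullet(S) = \Gamma(X_0,\PV^{-1,\bullet}_{X_0/S})$; under this identification a representative $\eta \in L^1 \otimes_\A \m_A$ of a class in $\mathrm{Def}_{L^\bullet}(A)$ becomes a global section of $I_S \cdot \PV^{-1,1}_{X_0/S}$. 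I would send $\eta$ to the predifferential $d_\eta = d + [\eta,-]$. By the computation preceding the proposition, $(d + [\eta,-])^2 = [\,d\eta + \tfrac{1}{2}[\eta,\eta] + \ell,-\,]$, so the Maurer--Cartan equation holds if and only if $d_\eta$ is an honest differential; hence $\mathbf{mc}(A)$ lands in $\mathrm{Diff}(\PV_{X_0/S})$, and by Corollary~\ref{trivial-torsor} together with $(-1)$-injectivity (Lemma~\ref{injective-G}) it is a bijection from Maurer--Cartan solutions onto $\mathrm{Diff}(\PV_{X_0/S})$ \emph{before} passing to gauge classes.

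Next I would match the two notions of gauge equivalence. A gauge in $\mathrm{Def}_{L^\bullet}$ is produced by $\mathrm{exp}_\theta$ for $\theta \in L^0 \otimes_\A \m_A = \Gamma(X_0, I_S \cdot \PV^{-1,0}_{X_0/S})$, and formula~\eqref{gauge-formula} says precisely that $\mathrm{exp}_\theta \circ d_{\eta} \circ \mathrm{exp}_{-\theta} = d_{\eta'}$, where $\eta'$ is the gauge transform of $\eta$. Thus gauge-equivalent Maurer--Cartan solutions are sent to differentials conjugate by the TWG automorphism $\mathrm{exp}_\theta$, i.e.\ gauge equivalent in the sense of $TD_{X_0/S_0}$; this shows $\mathbf{mc}(A)$ descends to gauge classes. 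Conversely, if $d_{\eta_1}$ and $d_{\eta_2}$ are gauge-equivalent differentials, the conjugating TWG automorphism is itself a gauge transform $\mathrm{exp}_\theta$ (every automorphism of a TWG deformation is of this form), so~\eqref{gauge-formula} and Lemma~\ref{injective-G} force $\eta_2$ to be the gauge transform of $\eta_1$. Hence $\mathbf{mc}(A)$ is injective on gauge classes, and it is surjective because every differential is a predifferential, thus of the form $d_\eta$. So $\mathbf{mc}(A)$ is bijective for each $A$.

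It remains to verify naturality, which I expect to be the only delicate point. For a morphism $A' \to A$ in $\mathbf{Art}_Q$ with minimal indices $k_0 \le k'$, the functorial map on $\mathrm{Def}_{L^\bullet}$ reduces $\eta'$ along $L^1 \otimes_\A \m_{A'} \to L^1 \otimes_\A \m_A$, while the functorial map on $TD_{X_0/S_0}$ restricts the differential $d_{k'}|_{S'} + [\eta',-]$ along a chosen map $\PV_{X_0/S'} \to \PV_{X_0/S}$. Commuting the square amounts to comparing the restricted predifferential $d_{k'}|_S$ with the predifferential $d_{k_0}|_S$ used to define $\mathbf{mc}(A)$. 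The essential point is that the $d_k$ were chosen \emph{compatibly} in the construction of $L^\bullet$, so $d_{k'}$ restricts to $d_{k_0}$ over $S_{k_0}$ through the chosen restriction maps; the two predifferentials on $\PV_{X_0/S}$ then agree up to these identifications, and the associated differentials are gauge equivalent. Since $TD_{X_0/S_0}$ was defined to be independent of the chosen restriction map, the square commutes on gauge classes. The main obstacle is thus the bookkeeping of the choices of restrictions $\PV_{X_0/S_{k+1}} \to \PV_{X_0/S_k}$ and of the compatible predifferentials $d_k$: one must check that every ambiguity they introduce is absorbed into gauge equivalence, and Corollary~\ref{trivial-torsor}, Lemma~\ref{injective-G}, and the independence statements already proved for $TD_{X_0/S_0}$ are exactly the tools for this.
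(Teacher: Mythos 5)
Your proposal is correct and takes essentially the same route as the paper's proof: the identification $L^\bullet_{X_0/S_0} \otimes_\A \cO(S) \cong \Gamma(X_0,\PV^{-1,\bullet}_{X_0/S})$ via Corollary~\ref{global-sec-base-change-compat}, the bijection $\eta \mapsto d_\eta$ between Maurer--Cartan solutions and differentials from Corollary~\ref{trivial-torsor}, the matching of the two gauge equivalences through Lemma~\ref{injective-G} and the fact that every automorphism of a TWG deformation is a gauge transform, and naturality via a compatible choice of restriction $\PV_{X_0/S'} \to \PV_{X_0/S}$ combined with the choice-independence of restriction in $TD_{X_0/S_0}$. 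The only cosmetic difference is that you make the conjugated differential explicit via formula~\eqref{gauge-formula}, whereas the paper invokes the uniqueness of the element $\xi$ with $d_\xi = \mathrm{exp}_\theta \circ d_{\eta'} \circ \mathrm{exp}_\theta^{-1}$ granted by Corollary~\ref{trivial-torsor}; the two arguments are interchangeable.
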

\begin{proof}
 By \cite[09B8]{stacks} the canonical map 
 $$L^\bullet_{X_0/S_0} \otimes_\A \A_k \to L^\bullet(S_k)$$
 is an isomorphism, so after restriction along $S \to S_k$ and using Corollary~\ref{global-sec-base-change-compat}, we obtain an isomorphism 
 $$L^\bullet_{X_0/S_0} \otimes_\A \cO(S) \to L^\bullet(S) = \Gamma(X_0,\PV^{-1,\bullet}_{X_0/S}),$$
 which is functorial with respect to $S \to S_\ell$. Maurer--Cartan elements $\eta \in I_S \cdot L^1(S)$ induce differentials $d_\eta \in \mathrm{Diff}(\PV_{X_0/S})$ by Corollary~\ref{trivial-torsor}. If $\eta,\eta' \in I_S \cdot L^1(S)$ are gauge equivalent via $\theta \in I_S \cdot L^0(S)$, i.e., we have $d_\eta \circ \mathrm{exp}_\theta = \mathrm{exp}_\theta \circ d_{\eta'}$ on $L^\bullet(S)$, then $d_\eta,d_{\eta'}$ are gauge equivalent on $\PV_{X_0/S}$. Indeed, by Corollary~\ref{trivial-torsor} we can find a unique $\xi \in I_S\cdot L^1(S)$ such that 
 $$d_\xi = \mathrm{exp}_\theta \circ d_{\eta'} \circ \mathrm{exp}_\theta^{-1}$$
 on $\PV_{X_0/S}$. Restricting the equation to $L^\bullet(S) \subset \PV_{X_0/S}$, we find $\eta = \xi$ by Lemma~\ref{injective-G}. We get a well-defined transformation $\mathbf{mc}$ on the level of objects. It is injective because every automorphism of $\PV_{X_0/S}$ is a gauge transform, and it is surjective by Corollary~\ref{trivial-torsor} and the fact that $d_\eta$ is a differential if and only if $\eta$ is Maurer--Cartan. $\mathbf{mc}$ is a natural transformation, i.e., compatible with maps $S \to S'$, because we can always find a restriction $\PV_{X_0/S'} \to \PV_{X_0/S}$ which fits into a commutative diagram 
 \[
  \xymatrix{
   L^\bullet_{X_0/S_0} \otimes_\A \cO(S') \ar[r]^-\cong \ar[d] & \PV^{-1,\bullet}_{X_0/S'} \ar[d]\\
   L^\bullet_{X_0/S_0} \otimes_\A \cO(S) \ar[r]^-\cong & \PV^{-1,\bullet}_{X_0/S} \\
  }
 \]
 with the chosen horizontal maps. Finally, recall that on $TD_{X_0/S_0}$ the restriction is independent of the choice for $\PV_{X_0/S'} \to \PV_{X_0/S}$.
\end{proof}

Using Proposition~\ref{func-elem-prop}, we recover that $LD_{X_0/S_0}$ is a deformation functor, which was first proved in \cite{Kato1996}. We find that the tangent space $t_{LD}$ is isomorphic to $H^1(L^\bullet_0) \cong H^1(X_0, \Theta^1_{X_0/S_0})$, so in case $f_0: X_0 \to S_0$ is proper, we recover that $LD_{X_0/S_0}$ has a hull by Schlessingers' criterion \cite[Thm.~2.11]{Schlessinger1968}.

\section{Example: A Log Smooth Curve}\label{ex-sec}

In this section, we compute the pdgla $L^\bullet_{C_0/S_0}$ for a log smooth curve $f_0: C_0 \to S_0$; though not all intricacies of $L^\bullet_{X_0/S_0}$ show up here, it gives a first impression of how the pdgla looks like. As underlying space, we take 
$$\underline C_0 := \{XYZ = 0\} \subset \PP^2_k,$$
i.e., three pairwise intersecting lines.
It carries an obvious log structure of embedding type (see \cite[11.4]{Kato1996}) via the embedding $\underline C_0 \subset \PP^2_k$; however, it does not admit a global section (at least we cannot find it) which would yield a log smooth log morphism $f_0: C_0 \to S_0 := \Spec (\NN \to k)$. Instead, we turn $\underline C_0$ into a log smooth curve as follows: The standard open covering of $\PP^2_k$ induces an open covering of $\underline C_0$ with three opens 
$$V_x = \Spec k[y,z]/(yz), \quad V_y = \Spec k[z,x]/(zx), \quad V_z = \Spec k[x,y]/(xy)$$
which are glued via 
\begin{align}
 \cO(V_x|_{xy}) &= k[y]_y \xrightarrow{\cong} k[x]_x = \cO(V_y|_{xy}), \quad y \mapsto 1/x, \nonumber \\
 \cO(V_y|_{yz}) &= k[z]_z \xrightarrow{\cong} k[y]_y = \cO(V_z|_{yz}), \quad z \mapsto 1/y, \nonumber \\
 \cO(V_z|_{zx}) &= k[x]_x \xrightarrow{\cong} k[z]_z = \cO(V_x|_{zx}), \quad x\mapsto 1/z. \nonumber 
\end{align}
Each open $V_\alpha$ has a deformation $C_\alpha \to \bSS := \bAA^1_t$; more precisely, we have three log smooth families
\begin{align}
 C_x &:= \Spec (\NN^2 \to k[y,z]) \to \Spec (\NN \to k[t]) = \bSS, \quad t \mapsto yz, \nonumber \\
 C_y &:= \Spec (\NN^2 \to k[z,x]) \to \Spec (\NN \to k[t]) = \bSS, \quad t \mapsto zx, \nonumber \\
 C_z &:= \Spec (\NN^2 \to k[x,y]) \to \Spec (\NN \to k[t]) = \bSS, \quad t \mapsto xy. \nonumber 
\end{align}
We define $C_x|_{xy} := \{y \not= 0\} \subset C_x$ etc.---then we have isomorphisms
\begin{align}
 C_x|_{xy} &\xrightarrow{\cong} C_y|_{xy}, \quad y \mapsto 1/x, \quad z \mapsto zx^2, \nonumber \\
 C_y|_{yz} &\xrightarrow{\cong} C_z|_{yz}, \quad z \mapsto 1/y, \quad x \mapsto xy^2, \nonumber \\
 C_z|_{zx} &\xrightarrow{\cong} C_x|_{zx}, \quad x \mapsto 1/z, \quad y \mapsto yz^2 \nonumber 
\end{align}
of families of log schemes. These are not the standard isomorphisms which yield $\PP^2$; in fact, they do \emph{not} satisfy the cocycle condition on the triple intersection, and thus, they cannot be glued to a family. However, unlike the standard isomorphisms, they are compatible with the map to the base. Thus, after restriction to $\underline C_0$, they glue to a log smooth curve $f_0: C_0 \to S_0$ because on $\underline C_0$, the cocycle condition is empty---there is no triple intersection.

We compute the Gerstenhaber algebra $G^\bullet_{C_0/S_0}$ of polyvector fields; it is concentrated in degrees $0$ and $-1$. The sheaf of absolute log differentials $\Omega^1_{C_0}$ is on the open cover $\{V_\alpha\}$ given by 
$$\Omega^1_{C_0}|_{V_x} = \cO_{V_x} \left\langle \frac{dy}{y},\frac{dz}{z} \right\rangle , \quad
\Omega^1_{C_0}|_{V_y} = \cO_{V_y} \left\langle \frac{dz}{z},\frac{dx}{x} \right\rangle , \quad
\Omega^1_{C_0}|_{V_z} = \cO_{V_z} \left\langle \frac{dx}{x},\frac{dy}{y} \right\rangle
$$
where $\cO\langle a,b\rangle := \cO \cdot a \oplus \cO \cdot b$ is the direct sum on formal generators specified in the bracket; the transition maps are given by 
\begin{align}
 V_x|_{xy} &\to V_y|_{xy}: \qquad \frac{dy}{y} \mapsto -\frac{dx}{x}, \quad \frac{dz}{z} \mapsto 2\frac{dx}{x} + \frac{dz}{z} ,\nonumber \\
 V_x|_{zx} &\to V_z|_{zx}: \qquad \frac{dy}{y} \mapsto 2\frac{dx}{x} + \frac{dy}{y}, \quad \frac{dz}{z} \mapsto -\frac{dx}{x} ,\nonumber \\
 V_y|_{yz} &\to V_z|_{yz}: \qquad \frac{dx}{x} \mapsto \frac{dx}{x} + 2\frac{dy}{y}, \quad \frac{dz}{z} \mapsto - \frac{dy}{y} .\nonumber 
\end{align}
The image of $f_0^*\Omega^1_{S_0} \to \Omega^1_{C_0}$ is freely generated by the sum of the two generators on every $V_\alpha$, e.g.~$\frac{dy}{y} + \frac{dz}{z}$ on $V_x$; setting 
$$\omega|_{V_x} = \left[\frac{dy}{y}\right], \quad \omega|_{V_y} = \left[\frac{dz}{z}\right], \quad \omega|_{V_z} = \left[\frac{dx}{x}\right],$$
we get a global section $\omega \in \Gamma(C_0,\Omega^1_{C_0/S_0})$ of the quotient, which defines a trivialization $\cO_{C_0} \cong \Omega^1_{C_0/S_0}, \ 1 \mapsto \omega$, of the log canonical bundle. In particular, the relative tangent bundle $G^{-1}_{C_0/S_0} = \Theta^1_{C_0/S_0}$ is trivial as well; under the natural embedding $\Theta^1_{C_0/S_0} \subset \Theta^1_{\underline C_0}$ (which forgets the log part of the derivation and the fact that it is relative), the dual of $\omega$ is the derivation given by 
\begin{align}
 V_x: &\quad \partial_{yz} := y\partial_y - z\partial_z, \nonumber \\
 V_y: &\quad \partial_{zx} := z\partial_z - x\partial_x, \nonumber \\
 V_z: &\quad \partial_{xy} := x\partial_x - y\partial_y. \nonumber 
\end{align}
$\wedge$-multiplication with an element of $G^0_{C_0/S_0} = \cO_{C_0}$ is given by the $\cO_{C_0}$-module structure; since $G^{-2}_{C_0/S_0} = 0$, this determines the $\wedge$-multiplication completely. When $f,g \in \cO_{C_0}$ and $\theta \in \Theta^1_{C_0/S_0}$, then $[f,g] = 0$ and $[\theta,\theta] = 0$---note that there is two times $\theta$ in the bracket; in general, $[\theta,\xi] \not= 0$ for a second element $\xi \in \Theta^1_{C_0/S_0}$.  Finally, we have
\begin{align}
 &V_x: \qquad [\partial_{yz},y] = -y, \quad [\partial_{yz},z] = z, \nonumber \\
 &V_y: \qquad [\partial_{zx},z] = -z, \quad [\partial_{zx},x] = x, \nonumber \\
 &V_z: \qquad [\partial_{xy},x] = -x, \quad [\partial_{xy},y] = y \nonumber
\end{align}
since our Lie bracket is the negative of the Schouten--Nijenhuis bracket.

When $S \in \mathbf{Art}_\NN$, then the families $C_\alpha/\bSS$ induce log smooth deformations $V_\alpha/S$ as required at the beginning of Section~\ref{def-gerstenhaber-algebra-sec}. The corresponding Gerstenhaber algebras are 
\begin{align}
 G^\bullet_{V_x/S} &= \cO_{V_x/S} \cdot 1 \oplus \cO_{V_x/S} \cdot \partial_{yz}, \nonumber \\
 G^\bullet_{V_y/S} &= \cO_{V_y/S} \cdot 1 \oplus \cO_{V_y/S} \cdot \partial_{zx}, \nonumber \\
 G^\bullet_{V_z/S} &= \cO_{V_z/S} \cdot 1 \oplus \cO_{V_z/S} \cdot \partial_{xy} \nonumber
\end{align}
where $\cO_{V_x/S}$ is the structure sheaf of $C_x \times_\bSS S$---i.e., of the deformation $V_x/S$---and where $\partial_{yz}$ must be interpreted in the respective ring. Our isomorphisms $C_x|_{xy} \cong C_y|_{xy}$ etc.~above induce the required isomorphisms $\Phi_{\alpha\beta}: V_\beta|_{V_\alpha \cap V_\beta} \cong V_\alpha|_{V_\alpha \cap V_\beta}$. Because the cocycle condition is empty again, we can actually glue the pieces to a Gerstenhaber deformation $G^\bullet_{C_0/S} := \cO_{C_0/S} \cdot 1 \oplus \cO_{C_0/S} \cdot \partial$ in the sense of Definition~\ref{gerstenhaber-defo-defn}.

Next, we perform the Thom--Whitney resolution of $G^\bullet_{C_0/S}$ with respect to the cover $\U := \{V_\alpha\}$; this yields a TWG deformation together with a differential on it, not only a predifferential. In particular, this computes $\PV_{C_0/S} \cong \TW^{\bullet,\bullet}(G^\bullet_{C_0/S}(\U))$. Since the Gerstenhaber deformations $G^\bullet_{C_0/S}$ are compatible for all $S \in \mathbf{Art}_\NN$, we also get restriction maps between the different $\PV_{C_0/S}$ and compatible differentials. Thus, in order to obtain 
$$L^\bullet_{C_0/S_0} = \varprojlim \Gamma(C_0,\PV^{-1,\bullet}_{C_0/S_k}),$$
we only need to compute the global sections of the Thom--Whitney resolution.

After ordering the index set as $x < y < z$, the global sections of the semicosimplicial sheaf of Gerstenhaber algebras $G^\bullet_{C_0/S}(\U)$ are
\[
 \xymatrix{
  \cO(V_x) \times \cO(V_y) \times \cO(V_z) \ar@<-.27em>[r]_-{\delta_1} \ar@<+.27em>[r]^-{\delta_0} & \cO(V_y \cap V_z) \times \cO(V_z \cap V_x) \times \cO(V_x \cap V_y) \\
  \Theta^1(V_x) \times \Theta^1(V_y) \times \Theta^1(V_z) \ar@<-.27em>[r]_-{\delta_1} \ar@<+.27em>[r]^-{\delta_0} & \Theta^1(V_y \cap V_z) \times \Theta^1(V_z \cap V_x) \times \Theta^1(V_x \cap V_y) \\
 }
\]
with $\delta_0(a,b,c) = (c,c,b)$ and $\delta_1(a,b,c) = (b,a,a)$ (in both rows); all other pieces $G^i_n := G^i_{C_0/S}(\U)_n$ are zero. Concretely, for $S_k = \Spec (\NN \to k[t]/(t^{k + 1}))$, we get 
\begin{align}
 k[y,z,t]/(yz - t, t^{k + 1}) \times k[z,x,t]/(zx - t, t^{k + 1}) \times k[x,y,t]/(xy - t, t^{k + 1}) \nonumber \\
 \substack{\longrightarrow\\[-0.9em] \longrightarrow} \
 k[z,t]_z/(t^{k + 1}) \times k[x,t]_x/(t^{k + 1}) \times k[y,t]_y/(t^{k + 1}) \nonumber \\
 \nonumber \\
 k[y,z,t]/(yz - t, t^{k + 1}) \cdot \partial \times k[z,x,t]/(zx - t, t^{k + 1}) \cdot \partial \times k[x,y,t]/(xy - t, t^{k + 1}) \cdot \partial \nonumber \\
 \substack{\longrightarrow\\[-0.9em] \longrightarrow} \
 k[z,t]_z/(t^{k + 1}) \cdot \partial \times k[x,t]_x/(t^{k + 1}) \cdot \partial \times k[y,t]_y/(t^{k + 1}) \cdot \partial \nonumber
\end{align}
with 
$$\delta_0(y,z,x) = (tz,x,ty), \qquad \delta_1(y,z,x) = (z,tx,y),$$
$$\delta_0(z,x,y) = (1/z,t/x,1/y), \qquad \delta_1(z,x,y) = (t/z,1/x,t/y)$$
and the same for the $\Theta^1$-row. The relevant pieces of the semisimplicial dgca $A_{PL}$ are 
$$(A_{PL})_0^0 = k, \ (A_{PL})_1^0 \cong k[t_0], \ (A_{PL})_0^1 = 0, \ (A_{PL})_1^1 \cong k[t_0]dt_0;$$
the two non-trivial relevant face maps $(A_{PL})_1^0 \to (A_{PL})_0^0$ are $t_0 \mapsto 0$ and $t_0 \mapsto 1$. We find 
\begin{align}
 \TW^{0,0}_k &:= \TW^{0,0}(G^\bullet_{C_0/S_k}(\U)) \nonumber \\
 &= \{((f,g,h),\sum t_0^i(p_i,q_i,r_i)) \in k \otimes G^0_0 \times k[t_0] \otimes G_1^0 \ | \ \nonumber \\
 &\quad \delta_0(f,g,h) = (p_0,q_0,r_0), \ \delta_1(f,g,h) = \sum_i (p_i,q_i,r_i)\} \nonumber \\
 &\cong G_0^0 \times t_0^2(k[t_0] \otimes G_1^0); \nonumber
\end{align}
the last isomorphism is due to the fact that we can choose the $G^0_0$-part and the $t_0^i$-parts for $i \geq 2$ arbitrarily whereafter the remaining two parts are uniquely determined by the equations. To write down the limit $\TW^{0,0}_\infty := \varprojlim \TW^{0,0}_k$, let us introduce notation
$$k\llangle x,y\rrangle := \left\{\sum_{i,j = 0}^\infty \alpha_{ij}x^iy^j \ | \ \forall k : \#\{\alpha_{ij} | \alpha_{ij} \not= 0 \wedge (i < k \vee j < k)\} < \infty \right\} \subset k\llbracket x,y\rrbracket;$$
then we have 
$$\TW^{0,0}_\infty \cong k\llangle y,z \rrangle \times k\llangle z,x\rrangle \times k\llangle x,y\rrangle \times t_0^2k[t_0,z]_z\llbracket t\rrbracket \times t_0^2k[t_0,x]_x\llbracket t\rrbracket \times t_0^2k[t_0,y]_y\llbracket t\rrbracket .$$
Since $(A_{PL})_0^1 = 0$, the next piece is 
$$\TW^{0,1}_\infty \cong k[t_0,z]_z\llbracket t\rrbracket dt_0 \times k[t_0,x]_x\llbracket t\rrbracket dt_0 \times k[t_0,y]_y\llbracket t\rrbracket dt_0;$$
finally,
$$\TW^{-1,0}_\infty = \TW^{0,0}_\infty \cdot \partial, \quad \TW^{-1,1}_\infty = \TW^{0,1}_\infty \cdot \partial . $$
By construction, ($\TW^{0,0}_\infty, \wedge)$ is canonically a subring of 
$$k\llangle y,z \rrangle \times k\llangle z,x\rrangle \times k\llangle x,y\rrangle \times k[t_0,z]_z\llbracket t\rrbracket \times k[t_0,x]_x\llbracket t\rrbracket \times k[t_0,y]_y\llbracket t\rrbracket ;$$
if $(f,g,h,p,q,r) \in \TW^{0,0}_\infty$ with $p = \sum p_i(t,z)t_0^i$ etc., then in the image $(p_0,q_0,r_0) = \delta_0(f,g,h)$ and $(p_1,q_1,r_1) = \delta_1(f,g,h) - \delta_0(f,g,h) - \sum_{i \geq 2}(p_i,q_i,r_i)$. In particular, the unit of the ring is $(1,1,1,0,0,0)$; now, $\TW^{-1,0}_\infty$ is a free $\TW^{0,0}_\infty$-module of rank $1$ with generator $\partial := 1 \cdot \partial$. The operator $[\partial,-]$ acts as a derivation on $\TW^{0,0}_\infty$, and it acts on $\TW^{0,1}_\infty$. Via this action, we describe the Lie bracket on the graded Lie algebra $L^\bullet_{C_0/S_0} = \TW^{-1,\bullet}_\infty$; namely, for $a,b \in \TW^{0,0}_\infty$, we have 
$$[a\partial,b\partial] = (a[\partial,b] - b[\partial,a])\partial$$
for the Lie bracket $[\TW^{-1,0}_\infty,\TW^{-1,0}_\infty]$,
and for $a \in \TW^{0,0}_\infty,m \in \TW^{0,1}_\infty$, we have
$$[a\partial,m\partial] = (a[\partial,m] - [\partial,a]m)\partial$$
for the Lie bracket $[\TW^{-1,0}_\infty,\TW^{-1,1}_\infty]$; moreover, $[\TW^{-1,1}_\infty,\TW^{-1,1}_\infty] = 0$ since $\TW^{-1,2}_\infty = 0$. The differential $d: \TW^{-1,0}_\infty \to \TW^{-1,1}_\infty$ is given by formally deriving $t_0$, i.e., by mapping $ft_0 \mapsto fdt_0$ etc.---after embedding $\TW^{-1,0}_\infty = \TW^{0,0}_\infty \cdot \partial$ in 
$$\left(k\llangle y,z \rrangle \times k\llangle z,x\rrangle \times k\llangle x,y\rrangle \times k[t_0,z]_z\llbracket t\rrbracket \times k[t_0,x]_x\llbracket t\rrbracket \times k[t_0,y]_y\llbracket t\rrbracket\right) \cdot \partial ,$$
only the last three factors matter. On e.g.~the factor $k[t_0,z]_z\llbracket t\rrbracket$, the map is given by $\sum p_i(t,z)t_0^i \mapsto \sum ip_i(t,z)t_0^{i - 1}$. Since $d$ is a differential, we have $\ell = 0$; the $k\llbracket t\rrbracket$-linear pdgla $L^\bullet_{C_0/S_0}$ is actually a dgla in this example.

Since $L^2_{C_0/S_0} = 0$, every $\eta \in L^1_{C_0/S_0} \otimes_{\llbracket t \rrbracket} A$ solves the Maurer--Cartan equation; we leave it to the interested reader to compute the gauge equivalence classes and the deformation functor. Conjecturally, it is represented by a $k\llbracket t\rrbracket$-algebra which is isomorphic to $k\llbracket t,s\rrbracket$. Namely, $f_0: C_0 \to S_0$ is log smooth and log Calabi--Yau, so we expect the deformation functor to be represented by a smooth $k\llbracket t\rrbracket$-algebra; its fiber over $k$ should have as many variables as the tangent space $H^1(L^\bullet_{C_0/S_0} \otimes_{k\llbracket t\rrbracket} k) \cong H^1(C_0,\Theta^1_{C_0/S_0})$ has dimensions.

\section{Appendix}

\subsection{The Strict Locus of Log Smooth Morphisms}

We include the result below for which we have, for the general case, no published reference. For an $s$-injective morphism $f: X \to S$ of fs log schemes---$s$-injective means that the induced map $\overline\M_{S,f(\bar x)} \to \overline\M_{X,\bar x}$ on ghost stalks is injective---consider the \emph{strict locus} $X^{str} \subset X$, i.e., the union of all opens $U \subset X$ such that $f|_U: U \to S$ is strict. It is the maximal open subset with that property. Since $X,S$ are fine, $f$ is strict on $U$ if and only if $\phi: f^{-1}(\overline\M_S) \to \overline\M_X$ is an isomorphism on $U$. For a geometric point $\bar x \in X$ where $\phi_{\bar x}$ is an isomorphism on stalks, $\phi$ is surjective in an \'etale neighbourhood of $\bar x$ due to coherence (since there every stalk is a quotient of $\overline\M_{X,\bar x}$). $s$-injectivity shows it is an isomorphism. Thus $\bar x \in X^{str}$, and the converse is clear. We see that the formation of $X^{str}$ commutes with base change along strict morphisms $T \to S$.

\begin{prop}\label{strict-dense}
 Let $f: X \to S$ be a log smooth and saturated morphism of fs log schemes. Then $X^{str} \subset X$ is scheme-theoretically dense. In particular, the forgetful map $\Theta^1_{X/S} \to \Theta^1_{\underline X/\underline S}$ to the derivations of underlying schemes is injective.
\end{prop}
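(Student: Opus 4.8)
The plan is to derive scheme-theoretic density from the single assertion that $X^{str}$ contains every associated point of $\cO_X$. For a locally Noetherian scheme an open subset is scheme-theoretically dense exactly when it contains $\mathrm{Ass}(\cO_X)$, equivalently when $\cO_X \to j_*\cO_{X^{str}}$ is injective for the open immersion $j: X^{str} \hookrightarrow X$; this injectivity is precisely the property invoked before the statement of Proposition~\ref{strict-dense}. So the whole proposition reduces to the inclusion $\mathrm{Ass}(\cO_X) \subseteq X^{str}$. To locate the associated points I would use that $f$ is flat (every integral, hence every saturated, log smooth morphism is flat) together with the fact that saturatedness forces the fibres of $f$ to be reduced, cf.~\cite{LoAG2018}. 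By the associated-points formula for flat morphisms, $\mathrm{Ass}(\cO_X) = \bigcup_{s \in \mathrm{Ass}(S)} \mathrm{Ass}(\cO_{X_s})$, and reducedness gives $\mathrm{Ass}(\cO_{X_s}) = \{\text{generic points of } X_s\}$. Since the formation of the strict locus commutes with the strict base change $s \to S$, we have $X^{str} \cap X_s = (X_s)^{str}$, so I have reduced everything to a fibrewise statement: for a saturated log smooth morphism $g: Y \to T$ over a log point $T$, the strict locus $Y^{str}$ contains every generic point of $Y$.

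For this fibrewise claim I would pass to a toric chart. By Kato's structure theorem \cite[Thm.~3.5]{kkatoFI}, étale locally $g$ factors as a strict étale morphism $Y \to T \times_{\Spec \ZZ[Q]} \Spec \ZZ[P]$ for an injective saturated homomorphism $\theta: Q \to P$ of fs monoids, with $Q = \overline\M_T$ and relative dimension $d = \mathrm{rank}\, P^{gp} - \mathrm{rank}\, Q^{gp}$. The underlying scheme of the fibre is then $\Spec k[P]/(\theta(Q\setminus\{0\}))$, whose surviving monomials form the complement of the monomial ideal generated by $\theta(Q\setminus\{0\})$, and whose irreducible components are the orbit closures attached to the faces $F \subseteq P$ that meet $\theta(Q)$ only in $0$ and satisfy $\mathrm{rank}\, F^{gp} = d$. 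At the generic point $\eta_F$ of such a component the relative ghost stalk is $\overline\M_{Y/T,\eta_F} = (P/F)/\theta(Q)$, so strictness at $\eta_F$ amounts to the composite $Q \xrightarrow{\theta} P \to P/F$ being an isomorphism of fs monoids. Injectivity comes from $\theta(Q)\cap F = 0$, equality of groupifications from the rank count $\mathrm{rank}(P^{gp}/F^{gp}) = \mathrm{rank}\, Q^{gp}$, and genuine surjectivity onto the saturated monoid $P/F$ from the saturation hypothesis on $\theta$. The hard part is exactly this last surjectivity, i.e.~pinning down the combinatorial normal form of the fibre and showing that saturatedness makes $Q \to P/F$ an isomorphism on the top-dimensional faces; this is the step where the Cartier-type/reducedness hypothesis is genuinely used, and I would carry it out via the monoid-theoretic analysis of saturated morphisms in \cite{LoAG2018}. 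Granting this, $\overline\M_{Y/T,\eta_F} = 0$ and $\eta_F \in Y^{str}$, completing the density proof.

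The final assertion then follows formally. On $X^{str}$ the morphism is strict log smooth, so the canonical map $\Omega^1_{\underline X/\underline S} \to \Omega^1_{X/S}$ is an isomorphism there, and dualizing shows the forgetful map $\Theta^1_{X/S} \to \Theta^1_{\underline X/\underline S}$ is an isomorphism over $X^{str}$. Since $\Omega^1_{X/S}$ is locally free by log smoothness, its dual $\Theta^1_{X/S}$ is locally free as well, whence the restriction $\Theta^1_{X/S} \to j_*\big(\Theta^1_{X/S}|_{X^{str}}\big)$ is injective by the scheme-theoretic density just established. A local section in the kernel of the forgetful map therefore restricts to $0$ on $X^{str}$ and so is already $0$, yielding the claimed injectivity of $\Theta^1_{X/S} \to \Theta^1_{\underline X/\underline S}$.
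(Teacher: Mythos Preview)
Your argument is correct and ultimately lands in the same toric combinatorics, but the reduction is organized differently from the paper's. The paper does not go through fibres or associated points at all: it invokes the chart theorem \cite[III, Thm.~3.3.3]{LoAG2018} to reduce \emph{directly} to the model morphism $\Spec(P \to \ZZ[P]) \to \Spec(Q \to \ZZ[Q])$ for a saturated injection $Q \hookrightarrow P$ of sharp toric monoids, and then simply cites the dense open $U_2$ of \cite[Cor.~3.11]{FFR2019} for that model. Your route---flatness and reduced fibres to pin down $\mathrm{Ass}(\cO_X)$, then a fibrewise chart over a log point, then the face analysis showing $Q \to P/F$ is an isomorphism---is more self-contained and makes visible exactly where saturatedness enters (both for reducedness of fibres and for the surjectivity step you flag as hard). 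The paper's route is terser and, by landing immediately over the Noetherian base $\Spec\ZZ[Q]$, avoids the locally-Noetherian hypothesis your associated-points criterion tacitly imposes on $S$; if you want your argument to cover the proposition as stated, you should either add a Noetherian reduction or imitate the paper's chart reduction.

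Two small technical points. First, Kato's original \cite[Thm.~3.5]{kkatoFI} produces a chart $\theta: Q \to P$ but does not guarantee that $\theta$ itself is saturated merely because $g$ is; for that you want a neat/saturated chart as in \cite[III, Thm.~3.3.3]{LoAG2018}, which is what the paper cites. Second, your derivation of the ``in particular'' clause---local freeness of $\Theta^1_{X/S}$ plus injectivity of restriction to a scheme-theoretically dense open---is exactly the argument the paper leaves implicit, so nothing further is needed there.
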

\begin{proof}
 First note that $f: X \to S$ is exact and hence $s$-injective. Using \cite[III.~Thm.~3.3.3]{LoAG2018}, we see that it is sufficient to prove the density statement for the morphism 
 $$\Spec (P \to \ZZ[P]) \to \Spec (Q \to \ZZ[Q])$$ induced by a saturated injection $Q \subset P$ of sharp toric monoids. Now we can take the set $U_2$ of \cite[Cor.~3.11]{FFR2019}. 
\end{proof}
\begin{rem}\label{sat-dense-rem}
 The density statement fails if we assume $f: X \to S$ only integral, but not saturated. Consider e.g.~$\Spec(\NN \to \CC[t]/(t^2)) \to \Spec(\NN \to \CC)$ induced by $\NN \to \NN, 1 \mapsto 2$, which is log smooth and integral, but nowhere strict. Moreover, we do not know if the injectivity statement remains true. This is the reason why we restrict the whole paper to \emph{saturated} log smooth morphisms.
\end{rem}
\begin{rem}
 In the case $S = \Spec (0 \to k)$, Proposition~\ref{strict-dense} is (in a weak sense) equivalent to \cite[Prop.~2.6]{Niziol2006}. Namely, if $f: X \to S$ is log smooth and saturated, then it is log-regular, and the \emph{log-trivial} locus $X^{tr}$ equals the strict locus $X^{str}$. The former is dense by \cite[Prop.~2.6]{Niziol2006}, so the latter is dense; since $X$ is reduced, $X^{str}$ is scheme-theoretically dense as well. Conversely, if $X$ is a log-regular fs log scheme of finite type over $S$, then the (structure) morphism $f: X \to S$ is saturated and log smooth by \cite[IV, Thm.~3.5.1]{LoAG2018}. Proposition~\ref{strict-dense} implies that $X^{str} \subset X$ is scheme-theoretically dense, so $X^{tr} = X^{str}$ is dense.
\end{rem}

\subsection{$\Lambda$-Linear Predifferential Graded Lie Algebras}\label{pdgla}

Fix a complete local Noetherian ring $\Lambda$ with maximal ideal $\m \subset \Lambda$. We briefly introduce the type of Lie algebra which controls log smooth deformations. This type of Lie algebra is called almost dgla in \cite[Thm.~1.1]{ChanLeungMa2019} where the existence of such a dgla is proven under some abstract conditions.

\begin{defn} A \emph{$\Lambda$-linear predifferential graded Lie algebra} ($\Lambda$-pdgla) is a quadruple $(L^\bullet,[-,-],d,\ell)$ where $(L^\bullet,[-,-])$ is a graded $\Lambda$-linear Lie algebra such that every $L^i$ is complete, $d: L^\bullet \to L^{\bullet + 1}$ is a $\Lambda$-linear derivation of degree $1$ and $\ell \in \m\cdot L^2$ is such that $d^2 = [\ell,-]$. In particular, $d$ is \emph{not} a differential in general. 
\end{defn}

For a local Artin $\Lambda$-algebra $A$ (with residue field $k := \Lambda/\m\Lambda$), the tensor product $L^\bullet \otimes_\Lambda A$ is an $A$-pdgla since $L^i \otimes_\Lambda A$ is complete. We consider them as some sort of infinitesimal deformation of the dgla 
$$L^\bullet_0 := L^\bullet \otimes_\Lambda k,$$
which we call the central fiber. In fact, since $\ell \in \m\cdot L^2$, it is a dgla ($d^2 = 0$).
We say $L^\bullet$ is \emph{faithful} if for all $A \in \mathbf{Art}_\Lambda$ and all $\xi \in L^\bullet \otimes_\Lambda A$, we have that $[\xi,-] = 0$ implies $\xi = 0$. In a faithful $L^\bullet$, the equality $[\xi,-] = 0$ implies $\xi = 0$ also for $\xi \in L^\bullet$.

Elements $\theta \in \m\cdot L^0$ give rise to \emph{gauge transforms}
$$\mathrm{exp}_\theta: L^\bullet \to L^\bullet, \quad \xi \mapsto \sum_{i = 0}^\infty\frac{([\theta,-])^i(\xi)}{i!} = \xi + [\theta,\xi] + \frac{1}{2}[\theta,[\theta,\xi]] + ...,$$
which are well-defined since $L^\bullet$ is complete. We find the identities 
$$\mathrm{exp}_\theta(\xi + \chi) = \mathrm{exp}_\theta(\xi) + \mathrm{exp}_\theta(\chi), \quad 
\mathrm{exp}_\theta([\xi,\chi]) = [\mathrm{exp}_\theta(\xi),\mathrm{exp}_\theta(\chi)]$$
but $\mathrm{exp}_\theta$ is not compatible with $d$, i.e., in general $\mathrm{exp}_\theta \circ d \not= d \circ \mathrm{exp}_\theta$. If $L^\bullet$ is faithful, then $\mathrm{exp}_\theta = \mathrm{exp}_{\theta'}$ implies $\theta = \theta'$. We consider gauge transforms as some sort of infinitesimal automorphism. In fact, $\mathrm{exp}_\theta$ induces the identity on $L^\bullet_0$.

Given an element $\eta \in \m\cdot L^1$, we define a derivation $d_\eta := d + [\eta,-]$ and consider it as a deformation of the differential on $L_0^\bullet$. We have $d_\eta^2 = 0$ if and only if $\eta$ satisfies the \emph{Maurer--Cartan equation}
$$d\eta + \frac{1}{2}[\eta,\eta] + \ell = 0.$$
In this case, we say $\eta$ is \emph{Maurer--Cartan}. If $L^\bullet$ is faithful, then $d_\eta = d_{\eta'}$ implies $\eta = \eta'$. We say two elements $\eta,\eta'$ are \emph{gauge equivalent} if there is $\theta \in \m\cdot L^0$ with $d_\eta \circ \mathrm{exp}_\theta = \mathrm{exp}_\theta \circ d_{\eta'}$. In this case, $\eta$ is Maurer--Cartan if and only if $\eta'$ is. We consider gauge equivalence classes of Maurer--Cartan elements as deformations of $L^\bullet_0$. We define the
\emph{Maurer--Cartan functor} 
$$\mathrm{MC}_{L^\bullet}: \mathbf{Art}_\Lambda \to \mathbf{Set}$$
by taking $A \in \mathbf{Art}_\Lambda$ to the set of Maurer--Cartan elements in $L^\bullet \otimes_\Lambda A$
and the \emph{deformation functor}
$$\mathrm{Def}_{L^\bullet}: \mathbf{Art}_\Lambda \to \mathbf{Set}$$
by taking gauge equivalence classes thereof. In case $\Lambda = k$ (in which $L^\bullet$ is an actual dgla) these functors reduce to the classical Maurer--Cartan and deformation functor. In general, they share a number of properties with them:
\begin{prop}\label{func-elem-prop}
 We have:
 \begin{itemize}
  \item The Maurer--Cartan functor $\mathrm{MC}_{L^\bullet}$ is homogeneous.
  \item The transformation $\mathrm{MC}_{L^\bullet} \Rightarrow \mathrm{Def}_{L^\bullet}$ is smooth and surjective.
  \item The functor $\mathrm{Def}_{L^\bullet}$ is a deformation functor.
  \item The tangent space of $\mathrm{MC}_{L^\bullet}$ is isomorphic to $Z^1(L^\bullet_0)$.
  \item The tangent space of $\mathrm{Def}_{L^\bullet}$ is isomorphic to $H^1(L^\bullet_0)$.
 \end{itemize}
\end{prop}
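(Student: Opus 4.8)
The plan is to deduce all five assertions from two structural facts and then feed them into Manetti's standard formalism for dglas (cf.\ \cite{ManettiComplexDefo2004}), carrying the extra term $\ell$ along throughout. The two facts are: (a) the assignment $A \mapsto (L^\bullet \otimes_\Lambda A,\ [-,-],\ d,\ \ell)$ sends a fibre product $A \times_C B$ with $B \to C$ surjective to the fibre product of the corresponding $A$- and $B$-pdglas, i.e.\ $L^i \otimes_\Lambda (A \times_C B) \cong (L^i \otimes_\Lambda A) \times_{L^i \otimes_\Lambda C} (L^i \otimes_\Lambda B)$; and (b) the gauge group functor $A \mapsto \exp(\m_A \cdot (L^0 \otimes_\Lambda A))$, with its Baker--Campbell--Hausdorff product, is a smooth group functor which restricts surjectively along surjections $B \to C$. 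Once these are in hand, the proofs are the pdgla analogues of the classical dgla statements; the only genuinely new bookkeeping is that every occurrence of the Maurer--Cartan expression carries the summand $\ell$, which is itself compatible with base change and hence causes no trouble.

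For the first assertion, homogeneity of $\mathrm{MC}_{L^\bullet}$ is immediate from (a): an element of $L^1 \otimes_\Lambda (A \times_C B)$ is the same as a matching pair $(\eta_A, \eta_B)$, and since $d$, $[-,-]$ and $\ell$ are compatible with the projections to $A$ and $B$, such a pair is Maurer--Cartan exactly when each component is; injectivity follows likewise, because a section of $L^2 \otimes_\Lambda (A \times_C B)$ vanishes iff both its images do. For the second assertion, surjectivity of $\mathrm{MC}_{L^\bullet} \Rightarrow \mathrm{Def}_{L^\bullet}$ holds by construction, and smoothness is the usual lifting argument: given $\eta \in \mathrm{MC}(A)$ and a class in $\mathrm{Def}(A')$ restricting to $[\eta]$, one lifts the gauge parameter witnessing the equivalence over $A$ along $A' \to A$ using (b), and applies the lifted gauge transform to a representative. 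For the third assertion, that $\mathrm{Def}_{L^\bullet}$ is a deformation functor follows from the general principle that the quotient of a homogeneous functor by a smooth group action is a deformation functor: the bijectivity condition over the base $C = k$ is homogeneity of $\mathrm{MC}_{L^\bullet}$ passed to the quotient, while the surjectivity of $\mathrm{Def}_{L^\bullet}(A \times_C B) \to \mathrm{Def}_{L^\bullet}(A) \times_{\mathrm{Def}_{L^\bullet}(C)} \mathrm{Def}_{L^\bullet}(B)$ is proven by using (b) to replace a representative $\eta_B$ by a gauge-equivalent one agreeing with $\eta_A$ over $C$ on the nose, and then gluing via (a).

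The main obstacle is fact (a), because the $L^i$ are only assumed complete, not flat, over $\Lambda$, so $L^i \otimes_\Lambda (-)$ is not exact. I would dispose of this by factoring any surjection in $\mathbf{Art}_\Lambda$ into small extensions $0 \to I \to B \to C \to 0$; for a small extension $I$ is a $k$-vector space, so $L^i \otimes_\Lambda I \cong L^i_0 \otimes_k I$ and the tensored Milnor sequence stays exact, yielding the fibre-product isomorphism by a diagram chase. Finally, the two tangent space assertions are direct computations over $k[\eps]/(\eps^2)$: writing a Maurer--Cartan element as $\eps v$ with $v \in L^1_0$, both $\tfrac12[\eps v, \eps v]$ and the image of $\ell$ vanish (the latter since $\ell \in \m \cdot L^2$ and the structure map $\m \Lambda \to k[\eps]$ is zero), so the equation collapses to $d_0 v = 0$ and $\mathrm{MC}_{L^\bullet}(k[\eps]) \cong Z^1(L^\bullet_0)$; and gauging $\eps v$ by $\eps u$ sends it to $\eps(v - d_0 u)$, whence $\mathrm{Def}_{L^\bullet}(k[\eps]) \cong H^1(L^\bullet_0)$.
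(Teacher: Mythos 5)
Your proposal is correct and follows essentially the same route as the paper's own (sketched) proof: reduce to small extensions, where the kernel is identified with $L_0^i \otimes_k I$, and run a diagram chase for homogeneity of $\mathrm{MC}_{L^\bullet}$; lift gauge parameters along the surjection $L^0 \otimes_\Lambda A' \to L^0 \otimes_\Lambda A$ for smoothness; invoke the quotient-of-a-homogeneous-functor-by-a-smooth-gauge-group principle (whose key input, as the paper notes, is that gauge transforms restrict trivially over $k$) for the deformation-functor property; and compute over $k[\eps]$ (where $\ell$ dies because $\m$ acts through $k$) for the two tangent spaces. The only difference is packaging—you isolate the exactness as the statement that $L^i \otimes_\Lambda (-)$ commutes with the relevant fibre products, while the paper asserts the corresponding exact rows directly—and it is worth knowing that this shared exactness claim ("the tensored sequence stays exact") is not a consequence of completeness of the $L^i$ alone but needs flatness-type input over $\Lambda$, an assumption the paper's proof leaves exactly as implicit as yours does.
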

\begin{proof}
 The proof does not employ new ideas beyond the classical situation (see e.g.~\cite{Manetti1999}), so we give only a brief indication. Let $A' \to A$ and $A'' \to A$ be surjections in $\mathbf{Art}_\Lambda$. We consider the canonical map 
 $$\sigma: F(A'' \times_A A') \to F(A'') \times_{F(A)} F(A')$$
 for any functor $F$ of Artin rings.
 Recall that homogeneous means $\sigma$ is an isomorphism. It suffices to prove this for a small extension $A'' \to A$. In this case, $B := A'' \times_A A' \to A'$ is a small extension as well with the same kernel $I$, so we obtain a diagram 
 \[
  \xymatrix{
   0 \ar[r] & L_0^\bullet \otimes_k I \ar[r] \ar@{=}[d] &  L^\bullet \otimes_\Lambda B \ar[d] \ar[r] & L^\bullet \otimes_\Lambda A' \ar[r] \ar[d] & 0 \\
   0 \ar[r] & L_0^\bullet \otimes_k I \ar[r] &  L^\bullet \otimes_\Lambda A'' \ar[r] & L^\bullet \otimes_\Lambda A \ar[r] & 0 \\
  }
 \]
 with exact rows. When we use the fact that the two kernels are equal, a straightforward diagram chase yields the result. To prove $\mathrm{MC}_{L^\bullet} \Rightarrow \mathrm{Def}_{L^\bullet}$ smooth, we use that every gauge transform can be lifted to some gauge transform because $L^0 \otimes A' \to L^0 \otimes A$ is surjective. To prove that $\mathrm{Def}_{L^\bullet}$ is a deformation functor, we use the map $\mathrm{MC}_{L^\bullet} \Rightarrow \mathrm{Def}_{L^\bullet}$ and that, when $A = k$, the restriction of a gauge transform on $L^\bullet \otimes A'$ to $A$ is trivial.
\end{proof}
\begin{rem}
 The functor $\mathrm{MC}_{L^\bullet}$ is a deformation functor (like every homogeneous functor), but $\mathrm{Def}_{L^\bullet}$ is \emph{not} homogeneous. Namely, a deformation functor is prorepresentable if and only if it is homogeneous and has finite-dimensional tangent space, but e.g.~the flat deformation functor of some surfaces is not prorepresentable (which is certainly an example of our theory).
\end{rem}

If we replace $d$ by $d_\eta$ and $\ell$ by the corresponding $\ell_\eta$, then the deformation functor remains unchanged. Taking this into account, we propose the following notion of homomorphism for future study of pdglas.

\begin{defn}
 A \emph{homomorphism} $\psi: L^\bullet \to M^\bullet$ of $\Lambda$-pdglas is a $\Lambda$-linear homomorphism of graded Lie algebras together with $\kappa_\psi \in \m\cdot M^1$ such that $d_M \circ \psi - \psi \circ d_L = [\kappa_\psi,\psi(-)]$ and $\psi(\ell_L) = \ell_M - d_M\kappa_\psi + \frac{1}{2}[\kappa_\psi,\kappa_\psi]$.
\end{defn}

A homomorphism induces a map $\mathrm{Def}(\psi)$ sending 
$\eta \mapsto \psi(\eta) - \kappa_\psi$.

\bibliography{Gerstenhaber-arXiv-II.bib}
\bibliographystyle{plain}

\end{document}